\let\footnote=\endnote
\tikzset{round/.style = { rounded corners=2mm }}
\newcommand{\tr}{r}
\newcommand{\beq}{\begin{eqnarray}}
\newcommand{\eeq}{\end{eqnarray}}
\newcommand{\beqn}{\begin{eqnarray*}}
\newcommand{\eeqn}{\end{eqnarray*}}
\newtheorem{theorem}{Theorem}
\newtheorem{lemma}{Lemma}
\newtheorem{claim}{Claim}
\newtheorem{assumption}{Assumption}
\begin{document}
%%%%%%%%%%%%%%%%

% Outcomment only when entries are known. Otherwise leave as is and
%   default values will be used.
%\setcounter{page}{1}
%\VOLUME{00}%
%\NO{0}%
%\MONTH{Xxxxx}% (month or a similar seasonal id)
%\YEAR{0000}% e.g., 2005
%\FIRSTPAGE{000}%
%\LASTPAGE{000}%
%\SHORTYEAR{00}% shortened year (two-digit)
%\ISSUE{0000} %
%\LONGFIRSTPAGE{0001} %
%\DOI{10.1287/xxxx.0000.0000}%

% Author's names for the running heads
% Sample depending on the number of authors;
% \RUNAUTHOR{Jones}
% \RUNAUTHOR{Jones and Wilson}
% \RUNAUTHOR{Jones, Miller, and Wilson}
% \RUNAUTHOR{Jones et al.} % for four or more authors
% Enter authors following the given pattern:
\RUNAUTHOR{Jiang and Zhang}

% Title or shortened title suitable for running heads. Sample:
% \RUNTITLE{Bundling Information Goods of Decreasing Value}
% Enter the (shortened) title:
\RUNTITLE{Online Resource Allocation with Stochastic Resource Consumption}

% Full title. Sample:
% \TITLE{Bundling Information Goods of Decreasing Value}
% Enter the full title:
\TITLE{Online Resource Allocation with Stochastic Resource Consumption}

% Block of authors and their affiliations starts here:
% NOTE: Authors with same affiliation, if the order of authors allows,
%   should be entered in ONE field, separated by a comma.
%   \EMAIL field can be repeated if more than one author
\ARTICLEAUTHORS{%
\AUTHOR{Jiashuo Jiang \quad Jiawei Zhang}

\AFF{\  \\
Department of Technology, Operations \& Statistics, Stern School of Business, New York University\\
}
% Enter all authors
}

\ABSTRACT{Abstract: we consider an online resource allocation problem where multiple resources, each with an individual initial capacity, are available to serve random requests arriving sequentially over multiple discrete time periods. At each time period, one request arrives and its associated reward and size are drawn independently from a known distribution that could be resource-dependent. Upon its arrival and revealing itself, an online decision has to be made on whether or not to accept the request. If accepted, another online decision should also be made to specify on assigning which resource to serve the request, then a certain amount of the resource equal to the size of the request will be consumed and a reward will be collected. The objective of the decision maker is to maximize the total collected reward subject to the capacity constraints. We develop near-optimal policies for our problem under two settings separately. When the reward distribution has a finite support, we assume that given each reward realization, the size distributions for different resources are perfectly positive correlated and we propose an adaptive threshold policy. We show that with certain regularity conditions on the size distribution, our policy enjoys an optimal $O(\log T)$ regret bound, where $T$ denotes the total time periods. When the support of the reward distribution is not necessarily finite, we develop another adaptive threshold policy with a $O(\log T)$ regret bound when both the reward and the size of each request are resource-independent and the size distribution satisfies the same conditions.
}

%\KEYWORDS{}

\maketitle
%%%%%%%%%%%%%%%%%%%%%%%%%%%%%%%%%%%%%%%%%%%%%%%%%%%%%%%%%%%%%%%%%%%%%%

% Text of your paper here
\section{Introduction}
We study an online resource allocation problem where multiple resources, each with an initial individual capacity, are available to serve a stream of requests arriving over a horizon of finite time periods. At each period, one request arrives with an associated reward and size, whose values are assumed to be random, following a given probability distribution that could be resource-dependent. Upon the arrival of a request, its reward and size are revealed. Then a decision-maker has to irrevocably decide to reject or accept it and to assign an available resource to serve it if accepted. If a request is served by a resource, a reward is collected and a certain amount of the resource equal to the size of the request is consumed. The decision maker's objective is to maximize the total collected reward without violating the resource capacity constraints.

The single-resource case of our model reduces to the online knapsack problem, which already enjoys a wide range of applications in logistics, scheduling and pricing, among others, as described in \cite{papastavrou1996dynamic}. Recent research on online knapsack problem have also been motivated by applications in online advertising (e.g. \cite{zhou2008budget}, \cite{balseiro2019learning}). In these models, the requests could arrive in an adversary order (e.g. \cite{zhou2008budget}) or in a stochastic order (e.g. \cite{papastavrou1996dynamic}, \cite{balseiro2019learning}). Our model assumes a stationary stochastic arriving order, i.e., the values of the reward and size of every request are drawn independently from a known distribution which is stationary across time.

The objective of this paper is to develop near optimal policies for our model. Throughout the paper,  the performance of our policies is compared against that of the prophet, i.e., the optimal offline decision maker.  We will use \textit{regret} to measure the performance of our policy, which is concerned about the \textit{additive} expected difference between the prophet and our policy. The formal definition of \textit{regret} are provided in Section $2$ after introducing the problem formulation and notations.

\subsection{Main Results and Contributions}
With additional assumptions, we develop policies with regret bounds of the order $\log T$, where $T$ is the total number of periods. More specifically, we assume that either the reward distribution has a finite support and given each reward realization, the size distributions for different resources are perfectly positive correlated, or both the reward and the size of each request are resource-independent and the size distribution satisfies the same conditions. The same bound was previously known only for the single-resource case with unitary rewards \cite{arlotto2018logarithmic}. The authors left it as an open question whether the bound still holds when the reward could take multiple values. We provide an affirmative answer to this question, and even more, we generalize it to the multi-resource case. Notice that the numerical experiments in \cite{arlotto2018logarithmic} indicate that the $O(\log T)$ should be order tight even for the single-resource unitary-reward case, thus it is unlikely to improve our bound.

Our regret bounds are based on deriving a linear programming to serve as the upper bound of the expected reward collected by the prophet. A key step is to establish a closed-form characterization of the optimal LP solution. We observe that the optimal LP solution possesses a threshold structure, which motivates us to consider an adaptive threshold policy. Also, the closed-form characterization allows us to derive certain convexity that is crucial to prove our regret bounds. As discussed in Section 3, our LP-based approach is different from that in \cite{arlotto2018logarithmic}. Moreover, our approach also allows us to analyze how the expected reward obtained by the prophet scales over $T$, which in turn establishes the asymptotic optimality of our policies when $T\rightarrow\infty$.

Note that several recent papers assume that both the reward distribution and the size distribution have a finite support and derive uniform regret bounds that are independent of $T$ (e.g. \cite{jasin2012re}, \cite{bumpensanti2020re}, \cite{arlotto2019uniformly}, \cite{vera2020bayesian}, \cite{vera2019online}, \cite{freund2019uniform}). However, these bounds depend implicitly on the cardinality of the support of the reward and size. Moreover, it is shown in \cite{arlotto2019uniformly} that even for the simplest multi-secretary setting where there is only one resource and the size of each request equals $1$, the optimal regret will depend linearly on the reciprocal of the minimal mass of the reward and size distribution. Therefore, their methods and results could not be generalized to our setting where the size is assumed to be continuously distributed.

\subsection{Other Related Literature}
Our model is closely related to the knapsack problem and several streams of literature on online optimization. In addition to the results mentioned in the previous subsection, we briefly review other related results.

The knapsack problem is mainly studied under two different settings, the offline setting where the requests are all available to be served before any decision is made and the online setting where the requests arrive sequentially over time. The early formulation of the offline setting, as described in \cite{dantzig1957discrete}, features deterministic reward and size for each request. Then, an offline and stochastic setting is considered in the literature, where the reward and size of each request are assumed to be random and will be revealed only after being included in the knapsack (e.g. \cite{derman1975stochastic}, \cite{bhalgat2011improved}, \cite{ma2018improvements}). Notably, \cite{dean2008approximating} considered the setting where the reward is deterministic and the size is random for each request, and provided constant bound between optimal nonadaptive policy and optimal adaptive policy, which is the first result that addressed the \textit{benefit of adaptivity}. For the online setting, when requests arrive in an adversary order, \cite{marchetti1995stochastic} show that no online algorithm can achieve a constant competitive ratio. However, \cite{zhou2008budget} prove a parametric competitive ratio of $\log(U/L)+1$ where $L$ and $U$ are the lower and upper bounds, respectively, of the reward/size ratios of the requests. Moreover, \cite{babaioff2007knapsack} obtain constant competitive ratio when the requests arrive in a random order and \cite{dutting2017prophet} prove a $0.2$ competitive ratio when the requests arrive in a non-stationary stochastic order. Besides the constant bounds, \cite{marchetti1995stochastic} prove a $O(\log^{3/2} T)$ regret bound when both the reward and the size of each requests are independently and uniformly distributed. \cite{lueker1998average} further improve the previous bound to $O(\log T)$ and show that the order of the bound is tight.

Our problem is also related to the Online Linear Programming (OLP) literature. The OLP problem takes a standard linear programming as the underlying form, while the columns of the constraint matrix and the corresponding coefficients of the objective function will arrive sequentially over time. The trade-off between algorithm competitiveness and resource capacity, denoted as $c$, is studied in the literature (e.g. \cite{buchbinder2009online}, \cite{agrawal2014dynamic}, \cite{devanur2019near}) and it is shown that in order to have a $1-\varepsilon$ competitive ratio, it is essential for $c$ to be at least $\Omega(1/\varepsilon^2)$, which implies that it is necessary for $c$ to be scaled up simultaneously with $T$ to have any regret bound sublinear in the prophet. Also, a recent work \cite{li2019online} adopted an asymptotic setting that $c$ scales up linearly in $T$ and proved a $O(\log T\log\log T)$ regret bound. However, a term $\frac{T}{c}$ shows up in their regret bound and is abbreviated in notation $O(\cdot)$ since it is a constant in their framework. As a result, a direct application of their result to our problem would imply a regret at least linear in $T$ since we consider our problem under the setting that $c$ is fixed as $T\rightarrow\infty$. The above discussions imply that, although the formulation of OLP seems like a non-parametric generalization of our model, the fixed resource capacity setting considered in this paper essentially differentiate our problem from the OLP problem and would require analysis other than those in the OLP literature. Thus, though closely related, our work is not covered by the OLP literature and serves as a supplement to online decision making literature under an asymptotic setting other than that of the OLP literature.
\section{Problem Formulation}
In our problem, there are $m$ resources. Each resource $j$ has an initial capacity $c_j$. The resources are used to serve a stream of requests arriving sequentially over $T$ discrete time periods. At each time period $t$, one request arrives, denoted as request $t$, which is associated with a nonnegative reward $\mathbf{\tilde{r}^t}=[\tilde{r}_1^t,\tilde{r}_2^t,\dots,\tilde{r}_m^t]$ and a positive size $\mathbf{\tilde{d}^t}=[\tilde{d}_1^t,\tilde{d}_2^t,\dots,\tilde{d}_m^t]$. Here, $(\mathbf{\tilde{r}^t}, \mathbf{\tilde{d}^t})$ is assumed to be stochastic and its value is drawn independently from a known distribution which is stationary across time. After the value of $(\mathbf{\tilde{r}^t}, \mathbf{\tilde{d}^t})$ is revealed, denoted as $(\mathbf{r}^t,\mathbf{d}^t)$, the decision maker has to irrevocably decide to reject or accept request $t$. If rejected, no resource will be consumed and no reward will be collected. If accepted, the decision maker has to assign one of the resources to serve it. If resource $j$ is assigned to serve request $t$, then $d_j^t$ units of resource $j$ will be consumed and a reward $r_j^t$ will be collected. The objective of the decision maker is to maximize the total collected reward without violating the capacity constraints.

Any policy $\pi$ is defined via a sequence of binary variables $\{x^\pi_{j}(t)\}$, where $x^\pi_j(t)$ denotes whether request $t$ is served by resource $j$. Here, $x^\pi_j(t)$ is random and its realization is dependent on the arriving sequence of the requests. Policy $\pi$ is feasible as long as $\pi$ is non-anticipating, i.e., for any $j$, $x^\pi_{j}(t)$ can only depend on $\left\{(\mathbf{r}^1,\mathbf{d}^1),(\mathbf{r}^2,\mathbf{d}^2),\dots,(\mathbf{r}^t,\mathbf{d}^t)\right\}$, and $\pi$ satisfies the following constraints:
\begin{equation}\label{df1}
\sum_{t=1}^{T} d_j^t\cdot x^\pi_{j}(t)\leq c_j~~~\forall j\text{~~and~~}\sum_{j=1}^{m} x^\pi_{j}(t)\leq1~~~\forall t
\end{equation}
The total reward collected by policy $\pi$ is denoted as:
\begin{equation}\label{df2}
V^\pi(\mathbf{I})=\sum_{t=1}^{T}\sum_{j=1}^{m}r_{j}^t\cdot x^\pi_{j}(t)
\end{equation}
where $\mathbf{I}=\left\{(\mathbf{r}^1,\mathbf{d}^1),(\mathbf{r}^2,\mathbf{d}^2),\dots,(\mathbf{r}^T,\mathbf{d}^T)\right\}$ denotes the sample path of request arrivals over the entire time horizon. Then the expected total reward collected by the policy $\pi$ is denoted as $\mathbb{E}[V^\pi(\mathbf{I})]$, where $\mathbb{E}[\cdot]$ denotes taking expectation over the request arrivals $\mathbf{I}$.

We compare the performance of any feasible policy $\pi$ to the prophet, an offline decision maker with full knowledge of the arriving sequence of the requests and always applying the optimal policy in hindsight. Similarly, given the sample path $\mathbf{I}$, we use a binary variable $x^t_{j}(\mathbf{I})$ to denote whether request $t$ is served by resource $j$ under the optimal policy in hindsight. Here, $x^t_{j}(\mathbf{I})$ is allowed to depend on the whole sample path $\mathbf{I}$. Indeed, given $\mathbf{I}$, $\{x^t_{j}(\mathbf{I})\}$ is the optimal solution to the following optimal offline problem:
\begin{equation}\label{df3}
\begin{aligned}
V^{\text{off}}(\mathbf{I})=&\max~~\sum_{t=1}^{T}\sum_{j=1}^{m}r_{j}^t\cdot x_{j}^t\\
&~\mbox{s.t.}~~~\sum_{t=1}^{T} d_j^t\cdot x_{j}^t\leq c_j~~~\forall j\\
&~~~~~~~~\sum_{j=1}^{m} x_{j}^t\leq1~~~\forall t\\
&~~~~~~~~x_{j}^t\in\{0,1\}~~~~ \forall j, \forall t
\end{aligned}
\end{equation}
Then, the expected total reward collected by the prophet is denoted as $\mathbb{E}[V^{\text{off}}(\mathbf{I})]$.

We will use \textit{regret} to measure the performance of any feasible policy. The regret of the policy $\pi$, denoted as $\text{Regret}(\pi)$, is defined as the difference between the expected total reward collected by the prophet and the expected total reward collected by the policy $\pi$:
\begin{equation}\label{df4}
\text{Regret}(\pi)=\mathbb{E}[V^{\text{off}}(\mathbf{I})]-\mathbb{E}[V^\pi(\mathbf{I})]
\end{equation}

\section{Regret Bound for Finite-Support Reward Case}
In this section, we develop our policy when the reward distribution has a finite support. Specifically, we make the following assumption:
\begin{assumption}\label{newassump1}
For each $t$, the value of $~\mathbf{\tilde{r}^t}$ is drawn independently from a finite set $\{ \mathbf{r}_1,\mathbf{r}_2,\dots,\mathbf{r}_n \}$, where $\mathbf{r}_i=[r_{i1}, r_{i2}, \dots, r_{im}]$. Moreover, given that $\mathbf{\tilde{r}^t}$ is realized as $\mathbf{r}_i$, we have $\mathbf{\tilde{d}^t}=\mathbf{b}_i\cdot \tilde{u}_i^t$, where $\mathbf{b}_i=[b_{i1}, b_{i2}, \dots, b_{im}]$ denotes the positive deterministic weight and $\tilde{u}_i^t$ is a single-dimensional random variable.
\end{assumption}
For each $t$, we use $p_i$ to denote the probability that $\mathbf{\tilde{r}^t}$ is realized as $\mathbf{r}_i$ and use $F_i(\cdot)$ to denote the distribution of $\tilde{u}_i^t$, where its realization is denoted as $u_i^t$. Since the hindsight optimum obtained by solving the integer programming \eqref{df3} often preserves complex structure thus is very hard to analyze, we first derive a tractable upper bound of $\mathbb{E}[V^{\text{off}}(\mathbf{I})]$. Moreover, we show that under Assumption \ref{newassump1}, the prophet upper bound we have derived could be reduced equivalently into $n$ sub-LPs and accordingly, our original online resource allocation problem could be reduced to $n$ separate sub-problems, where in each sub-problem the reward of each request is deterministic and each sub-LP serves as the prophet upper bound for each sub-problem.

Then, we show that the optimal solution of each sub-LP possesses a threshold structure and the thresholds could be given in a closed-form expression. By assuming some additional regularity conditions on the distribution function $F_i(\cdot)$ for each $i$, we further show a convex structure of the thresholds and then obtain a second-order upper estimates of how the optimal value of each sub-LP would vary over the right hand side (RHS) of the constraints. By applying a adaptive threshold policy in each sub-problem, we show in our regret analysis that the first order terms in the estimate will cancel out and only the second order terms will be left, which will finally lead to a $O(\log T)$ regret bound.
\subsection{Prophet Upper Bound}
We obtain an upper bound of $\mathbb{E}[V^{\text{off}}(\mathbf{I})]$ by considering the following LP:
\begin{equation}\label{newna1}
\begin{aligned}
\text{LP}^{\text{UB}}:=&\max~~T\cdot \sum_{i=1}^{n} p_i\cdot \mathbb{E}_i[\sum_{j=1}^{m}r_{ij}\cdot x_{ij}(u_i)]\\
&~\mbox{s.t.}~~~T\cdot \sum_{i=1}^{n} p_i\cdot b_{ij}\cdot \mathbb{E}_i[u_i\cdot x_{ij}(u_i)]\leq c_{j}~~~\forall j\\
&~~~~~~~~\sum_{j=1}^{m}x_{ij}(u_i)\leq 1~~~\forall i, \forall u_i\\
&~~~~~~~~0\leq x_{ij}(u_i)\leq 1~~~\forall i, \forall j, \forall u_i,
\end{aligned}
\end{equation}
Here, $\mathbb{E}_i[\cdot]$ denotes taking expectation over realization $u_i$ with distribution function $F_i(\cdot)$ and $x_{ij}(u_i)$ could be interpreted as the probability that request $t$ is served using resource $j$ given the realization $(\mathbf{r}_i,\mathbf{b}_i\cdot u_i)$, regardless of the time period $t$.
\begin{lemma}\label{newlemma10}
It holds that $\text{LP}^{\text{UB}}\geq\mathbb{E}[V^{\text{off}}(\mathbf{I})]$.
\end{lemma}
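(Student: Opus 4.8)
The plan is to exhibit an explicit feasible solution to $\text{LP}^{\text{UB}}$ whose objective value equals $\mathbb{E}[V^{\text{off}}(\mathbf{I})]$ exactly; since $\text{LP}^{\text{UB}}$ maximizes over all feasible solutions, this at once yields the claimed inequality. The natural candidate is built by averaging the (random) offline optimal decisions over time and conditioning on the realized type of a request. Concretely, for each reward index $i$, size scaling $u$, and resource $j$, I would set
\begin{equation*}
x_{ij}(u) = \frac{1}{T}\sum_{t=1}^{T} \Pr\!\left[x_{j}^{t}(\mathbf{I}) = 1 \;\middle|\; \mathbf{\tilde{r}^t} = \mathbf{r}_i,\ \tilde{u}_i^t = u\right],
\end{equation*}
namely the time-averaged probability that the offline optimum assigns resource $j$ to a request whose reward realizes as $\mathbf{r}_i$ and whose size scaling equals $u$. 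Each $x_{ij}(u)$ lies in $[0,1]$ as an average of conditional probabilities, so the box constraints hold; and because, conditional on any fixed type, the events $\{x_j^t(\mathbf{I})=1\}$ are disjoint across $j$ (a request is served by at most one resource), their conditional probabilities sum to at most one, and averaging over $t$ preserves this, giving $\sum_{j}x_{ij}(u)\le 1$.

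For the capacity constraint, the key observation is that $d_j^t = b_{ij}\,u$ on the event that request $t$ has type $(i,u)$, so that by the law of total expectation
\begin{equation*}
T\sum_{i=1}^{n} p_i\, b_{ij}\, \mathbb{E}_i[u\, x_{ij}(u)] = \sum_{t=1}^{T}\sum_{i=1}^{n} p_i\, b_{ij} \int u\, \Pr\!\left[x_j^t(\mathbf{I})=1 \mid \mathbf{\tilde{r}^t}=\mathbf{r}_i,\ \tilde{u}_i^t = u\right] dF_i(u) = \mathbb{E}\!\left[\sum_{t=1}^T d_j^t\, x_j^t(\mathbf{I})\right].
\end{equation*}
Since every realization of the offline solution obeys $\sum_t d_j^t x_j^t(\mathbf{I}) \le c_j$, taking expectations bounds this quantity by $c_j$, so the capacity constraint is satisfied and the constructed point is feasible. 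An identical conditioning argument, now applied to the reward instead of the consumption and using $r_j^t = r_{ij}$ on the event that request $t$ has reward type $i$, shows that the objective of $\text{LP}^{\text{UB}}$ at this solution equals $\sum_{t}\sum_{j}\mathbb{E}[r_j^t x_j^t(\mathbf{I})] = \mathbb{E}[V^{\text{off}}(\mathbf{I})]$. This produces a feasible point attaining the target value, which completes the argument.

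I expect the only delicate point to be bookkeeping the conditional expectations correctly. The difficulty is that $x_j^t(\mathbf{I})$ depends on the \emph{entire} sample path, whereas the LP variables depend only on a single request's type; the construction sidesteps this by conditioning on the type of request $t$ and integrating out all remaining randomness, and one must check that the resulting $x_{ij}(\cdot)$ is a legitimate measurable function of $u$ eligible for the LP. Everything else reduces to linearity of expectation, with the inequality entering in exactly one place: when the almost-sure capacity feasibility of the offline solution is converted, via expectation, into the capacity bound of the relaxation.
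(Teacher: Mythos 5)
Your proof is correct and follows essentially the same route as the paper: both condition the offline optimal assignment on the type of request $t$ to obtain a feasible solution of $\text{LP}^{\text{UB}}$ whose objective value equals $\mathbb{E}[V^{\text{off}}(\mathbf{I})]$, with the inequality entering only when the almost-sure capacity constraint is converted to its expectation. The one (minor) difference is that you eliminate the dependence on $t$ by averaging over time, whereas the paper invokes stationarity to claim the conditional decisions can be taken identical across periods; your time-average is the cleaner way to make that step rigorous.
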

\begin{proof}{Proof:}
We use $\{x^t_j(\mathbf{I})\}$ to denote one optimal solution of \eqref{df3} given the arriving sequence of requests $\mathbf{I}$ and denote $x^t_{ij}(u_i)=\mathbb{E}[x^t_j(\mathbf{I})|(\mathbf{r}^t,\mathbf{d}^t)=(\mathbf{r}_i,\mathbf{b}_i\cdot u_i)]$. Then we have
\begin{equation}\label{newlpupper1}
0\leq x^t_{ij}(u_i)\leq 1\text{~~and~~}\sum_{j=1}^{m}x^t_j(\mathbf{I})\leq 1\Rightarrow \sum_{j=1}^{m}x^t_{ij}(u_i)\leq 1
\end{equation}
Also, we have that
\begin{equation}\label{newlpupper2}
\begin{aligned}
\mathbb{E}[V^{\text{off}}(\mathbf{I})]&=\mathbb{E}[\sum_{t=1}^{T}\sum_{j=1}^{m}r_j^t\cdot x^t_j(\mathbf{I})]=\sum_{t=1}^{T}\mathbb{E}\left[\mathbb{E}[\sum_{j=1}^{m}r_j^t\cdot x^t_j(\mathbf{I})|(\mathbf{r}^t,\mathbf{d}^t)=(\mathbf{r}_i,\mathbf{b}_i\cdot u_i)]\right]\\
&=\sum_{t=1}^{T}\mathbb{E}[\sum_{j=1}^{m}r_j\cdot x^t_{ij}(u_i)]
\end{aligned}
\end{equation}
For each $j$, we could further obtain the following constraint regarding $x^t_{ij}(u_i)$:
\begin{equation}\label{newlpupper3}
\begin{aligned}
\sum_{t=1}^{T}\sum_{i=1}^{n}p_i\cdot\mathbb{E}_i[b_{ij}\cdot u_i\cdot x^t_{ij}(u_i)]&=\sum_{t=1}^{T}\mathbb{E}\left[\mathbb{E}[d_j^t\cdot x^t_j(\mathbf{I})|(\mathbf{r}^t,\mathbf{d}^t)=(\mathbf{r}_i,\mathbf{b}_i\cdot u_i)]\right]\\
&=\mathbb{E}[\sum_{t=1}^{T}d_j^t\cdot x^t_j(\mathbf{I})]\leq c_j
\end{aligned}
\end{equation}
Since the request distribution is stationary across different time periods and the offline optimal solution made by the prophet is not influenced by the order of the sequence $\mathbf{I}$, there must exists one optimal solution of \eqref{df3} such that $x^{t_1}_{ij}(u_i)=x^{t_2}_{ij}(u_i)$ for arbitrary $t_1$ and $t_2$. Thus, we could suppress the notation $t$ in $x^t_{ij}(u_i)$. Then given \eqref{newlpupper1}, \eqref{newlpupper2} and \eqref{newlpupper3}, it holds that $\text{LP}^{\text{UB}}\geq \mathbb{E}[V^{\text{off}}(\mathbf{I})]$.
\Halmos
\end{proof}

We then show how to reduce \eqref{newna1} equivalently into $n$ sub-LPs and characterize the optimal solution of each sub-LP in a closed-form expression. We consider the sub-LP of the following formulation:
\begin{equation}\label{up22}
\begin{aligned}
G_i(\mathbf{\mathbf{\tilde{c}}}_i,t):=&\max~~(T-t+1)\cdot p_i\cdot \mathbb{E}_i[\sum_{j=1}^{m}r_{ij}\cdot x_{ij}(u_i)]\\
&~\mbox{s.t.}~~~(T-t+1)\cdot p_i\cdot b_{ij}\cdot \mathbb{E}_i[u_i\cdot x_{ij}(u_i)]\leq \tilde{c}_{ij}~~~\forall j\\
&~~~~~~~~\sum_{j=1}^{m}x_{ij}(u_i)\leq 1~~~\forall u_i\\
&~~~~~~~~0\leq x_{ij}(u_i)\leq 1~~~\forall j, \forall u_i
\end{aligned}
\end{equation}
where $\mathbf{\tilde{c}}_i=[\tilde{c}_{i1},\tilde{c}_{i2},\dots,\tilde{c}_{im}]$.
\begin{lemma}\label{lemma1101}
It holds that
\begin{equation}\label{up21}
\begin{aligned}
\text{LP}^{\text{UB}}=&\max~~\sum_{i=1}^{n}G_i(\mathbf{\tilde{c}}_i, 1)\\
&~\mbox{s.t.}~~~\sum_{i=1}^{n}\tilde{c}_{ij}\leq c_j~~~ \forall j\\
&~~~~~~~~\tilde{c}_{ij}\geq0~~~ \forall i, \forall j
\end{aligned}
\end{equation}
\end{lemma}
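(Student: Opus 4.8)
The plan is to prove the identity by a variable-splitting (decomposition) argument, exploiting the fact that both the objective and almost all constraints of $\text{LP}^{\text{UB}}$ decouple across the reward index $i$. Indeed, the objective $T\sum_{i=1}^n p_i \mathbb{E}_i[\sum_j r_{ij} x_{ij}(u_i)]$ is a sum of $n$ independent contributions, one per $i$, and the constraints $\sum_j x_{ij}(u_i)\le 1$ and $0\le x_{ij}(u_i)\le 1$ involve a single $i$ only. The sole coupling among different values of $i$ comes through the $m$ resource-capacity constraints $T\sum_i p_i b_{ij}\mathbb{E}_i[u_i x_{ij}(u_i)]\le c_j$. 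I would therefore introduce, for each resource $j$ and each reward type $i$, a splitting variable $\tilde{c}_{ij}$ representing the portion of resource $j$'s capacity allocated to type-$i$ requests, and establish both inequalities between $\text{LP}^{\text{UB}}$ and the right-hand side of \eqref{up21}.

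For the direction $\text{LP}^{\text{UB}}\le$ RHS, I would take any feasible solution $\{x_{ij}(u_i)\}$ of \eqref{newna1} and define $\tilde{c}_{ij}:=T p_i b_{ij}\mathbb{E}_i[u_i x_{ij}(u_i)]$. By the capacity constraint of \eqref{newna1}, $\sum_{i=1}^n \tilde{c}_{ij}\le c_j$ for every $j$, so $\{\tilde{c}_{ij}\}$ is feasible for the outer problem in \eqref{up21}; moreover, for each fixed $i$ the restriction $\{x_{ij}(u_i)\}_j$ is, by construction, feasible for the sub-LP $G_i(\mathbf{\tilde{c}}_i,1)$ (recall that $T-t+1=T$ at $t=1$). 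Since the objective of \eqref{newna1} equals $\sum_{i=1}^n T p_i\mathbb{E}_i[\sum_j r_{ij}x_{ij}(u_i)]$, which is bounded above by $\sum_{i=1}^n G_i(\mathbf{\tilde{c}}_i,1)$, we conclude that the value of \eqref{newna1} is at most the RHS of \eqref{up21}.

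For the reverse inequality, I would start from an optimal allocation $\{\tilde{c}_{ij}\}$ of the outer problem together with optimal solutions $\{x_{ij}^*(u_i)\}_j$ of each $G_i(\mathbf{\tilde{c}}_i,1)$, and paste them into a single candidate solution of \eqref{newna1}. Feasibility is immediate: the separable constraints hold because they held in each sub-LP, while for each resource $j$ one has $T\sum_i p_i b_{ij}\mathbb{E}_i[u_i x_{ij}^*(u_i)]\le \sum_i \tilde{c}_{ij}\le c_j$. The objective value of this pasted solution is exactly $\sum_i G_i(\mathbf{\tilde{c}}_i,1)$, giving $\text{LP}^{\text{UB}}\ge$ RHS, and the two bounds together yield equality. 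The argument is essentially bookkeeping, so the only point requiring care is the handling of the capacity inequality: I would observe that $G_i(\mathbf{\tilde{c}}_i,1)$ is nondecreasing in each component $\tilde{c}_{ij}$ (enlarging a budget only enlarges the feasible region), which justifies keeping the outer constraint as $\sum_i\tilde{c}_{ij}\le c_j$ rather than forcing equality and guarantees that restricting to such allocations loses nothing. This monotonicity is the one substantive step; everything else is a direct matching of objective terms and constraints between the coupled and decoupled formulations.
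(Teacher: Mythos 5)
Your proposal is correct and follows essentially the same route as the paper: both directions are proved by the same variable-splitting construction, taking $\tilde{c}_{ij}=Tp_ib_{ij}\mathbb{E}_i[u_i x_{ij}(u_i)]$ for the "$\leq$" direction and pasting the sub-LP optima for the "$\geq$" direction. The only difference is your appeal to monotonicity of $G_i$ in $\tilde{c}_{ij}$, which is actually superfluous here (the inequality form of the outer constraint causes no loss in either direction without it), and the paper does not use it.
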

~\\
We now show the threshold structure of the optimal solution of the sub-LP \eqref{up22}. Specifically, for each $i$, denote $\{i_1,i_2,\dots,i_m\}$ as a permutation of $\{1,2,\dots,m\}$ such that $r_{ii_1}\geq r_{ii_2}\geq\dots\geq r_{ii_m}\geq0$ and we set $r_{ii_{m+1}}=0$, then for each $i, j, t$,  we define the threshold $\mu^{ii_j}_t(\mathbf{\tilde{c}}_i)$ as the solution of the following equation:
\begin{equation}\label{up2}
p_i\cdot\int_{0}^{\mu^{ii_j}_t(\mathbf{\tilde{c}}_i)}u dF_i(u)=\frac{\sum_{k=1}^{j}\tilde{c}_{ii_k}/b_{ii_k}}{T-t+1}~~~~
\end{equation}
If there is no $\mu^{ii_j}_t(\mathbf{\tilde{c}}_i)$ satisfying the above equation, we set $\mu^{ii_j}_t(\mathbf{\tilde{c}}_i)=\infty$.
\begin{lemma}\label{Upper1}
For each $i, j, u_i$, define the solution $\hat{x}_{ii_j}(u_i)$ by
\begin{equation}\label{sep02}
\hat{x}_{ii_j}(u_i)=\left\{\begin{aligned}
&1,~~~\text{if~}\mu^{ii_{j-1}}_t(\mathbf{\tilde{c}}_i)< u_i\leq \mu^{ii_{j}}_t(\mathbf{\tilde{c}}_i)\\
&0,~~~\text{otherwise}
\end{aligned}\right.
\end{equation}
where $\mu^{ii_{0}}_t(\mathbf{\tilde{c}}_i)$ is set to be $0$. Then $\{\hat{x}_{ii_j}(u_i)\}$ is an optimal solution to \eqref{up22} and it holds that
\begin{equation}\label{sep03}
G_i(\mathbf{\tilde{c}}_i,t)=(T-t+1)\cdot p_i\cdot \sum_{j=1}^{m}(r_{ii_j}-r_{ii_{j+1}})\cdot F_i(\mu^{ii_j}_t(\mathbf{\tilde{c}}_i))
\end{equation}
\end{lemma}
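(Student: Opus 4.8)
The plan is to establish Lemma \ref{Upper1} in three steps: verify primal feasibility of $\{\hat{x}_{ii_j}\}$ for \eqref{up22}, certify its optimality through a weak-duality argument built on an explicitly constructed dual vector, and then evaluate the objective by a summation-by-parts identity to reach \eqref{sep03}. Throughout I abbreviate $K:=(T-t+1)p_i$ and $\mu_j:=\mu^{ii_j}_t(\mathbf{\tilde{c}}_i)$, with the convention $\mu_0=0$.

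\textbf{Feasibility.} The left-hand side of \eqref{up2} is nondecreasing in its upper limit while the right-hand side is nondecreasing in $j$, so $0=\mu_0\le\mu_1\le\cdots\le\mu_m$. Hence the intervals $(\mu_{j-1},\mu_j]$ are pairwise disjoint; for every $u$ at most one of the $\hat{x}_{ii_j}(u)$ equals $1$, which gives $\sum_j\hat{x}_{ii_j}(u)\le1$, and the box constraints hold trivially. For the $j$-th capacity constraint I use \eqref{up2} in the telescoped form $K\int_0^{\mu_j}u\,dF_i(u)=\sum_{k\le j}\tilde{c}_{ii_k}/b_{ii_k}$ to obtain $K b_{ii_j}\int_{\mu_{j-1}}^{\mu_j}u\,dF_i(u)=b_{ii_j}\big(\sum_{k\le j}\tilde{c}_{ii_k}/b_{ii_k}-\sum_{k\le j-1}\tilde{c}_{ii_k}/b_{ii_k}\big)=\tilde{c}_{ii_j}$, so each capacity constraint with finite $\mu_j$ is in fact tight.

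\textbf{Optimality.} Assigning a multiplier $\lambda_{i_j}\ge0$ to the $j$-th capacity constraint, for any feasible $x$ the constraint together with $\lambda_{i_j}\ge0$ yields the weak-duality bound
\[
K\sum_{j}r_{ii_j}\!\int x_{ii_j}(u)\,dF_i(u)\;\le\;\sum_j\lambda_{i_j}\tilde{c}_{ii_j}+K\!\int\Big[\sum_j\big(r_{ii_j}-\lambda_{i_j}b_{ii_j}u\big)x_{ii_j}(u)\Big]dF_i(u).
\]
Since $x_{ii_j}(u)\ge0$ and $\sum_j x_{ii_j}(u)\le1$, the bracket is at most $\max\{0,\max_j c_j(u)\}$ with $c_j(u):=r_{ii_j}-\lambda_{i_j}b_{ii_j}u$. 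I now choose $\lambda$ so that the upper envelope of the affine maps $c_j$ reproduces the threshold structure: set $\lambda_{i_m}=r_{ii_m}/(b_{ii_m}\mu_m)$ and, recursively for $j=m,\dots,2$, $\lambda_{i_{j-1}}b_{ii_{j-1}}=\lambda_{i_j}b_{ii_j}+(r_{ii_{j-1}}-r_{ii_j})/\mu_{j-1}$. Because $r_{ii_{j-1}}\ge r_{ii_j}$, each $\lambda_{i_j}\ge0$ and the slopes $\lambda_{i_j}b_{ii_j}$ are nonincreasing in $j$; moreover $c_{j-1}$ and $c_j$ cross exactly at $\mu_{j-1}$ and $c_m(\mu_m)=0$. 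With nonincreasing slopes and crossing abscissae increasing in $j$, the upper envelope equals $c_j$ precisely on $(\mu_{j-1},\mu_j]$ and is nonnegative there, and is negative for $u>\mu_m$. Therefore $\hat{x}$ attains the pointwise maximum $\max\{0,\max_j c_j(u)\}$; combined with the capacity tightness from the first step, which makes $\sum_j\lambda_{i_j}\tilde{c}_{ii_j}=K\sum_j\lambda_{i_j}b_{ii_j}\int u\,\hat{x}_{ii_j}\,dF_i$, the displayed bound holds with equality at $\hat{x}$, proving $\hat{x}$ optimal.

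\textbf{Value and the main obstacle.} With $F_i(\mu_0)=0$ and the convention $r_{ii_{m+1}}=0$, Abel summation of $G_i(\mathbf{\tilde{c}}_i,t)=K\sum_j r_{ii_j}\big(F_i(\mu_j)-F_i(\mu_{j-1})\big)$ regroups the telescoping sum into $K\sum_j(r_{ii_j}-r_{ii_{j+1}})F_i(\mu_j)$, which is exactly \eqref{sep03}. The step I expect to be most delicate is the optimality certificate: because \eqref{up22} is an infinite-dimensional program, a continuum of assignment and box constraints indexed by $u$, I deliberately route through the elementary weak-duality inequality above, valid for every feasible $x$ and requiring no appeal to a strong-duality theorem, rather than through an abstract primal-dual pair. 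Within that argument the geometric claim that nonincreasing slopes together with increasing consecutive crossing points yield an upper envelope ordered as $c_1,\dots,c_m$ is the crux, and the degenerate cases must be absorbed: when $\mu_j=\infty$ the corresponding capacity is slack and I set its multiplier to $0$, truncating the recursion at the last finite threshold, and when thresholds coincide the associated interval is empty so that resource simply goes unused, with \eqref{sep03} remaining valid under $F_i(\infty)=1$.
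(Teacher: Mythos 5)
Your proof is correct, but it certifies optimality by a genuinely different mechanism than the paper. The paper's proof of Lemma \ref{Upper1} never constructs dual multipliers: it takes an arbitrary optimal solution $\{x^*_{ii_j}\}$, passes to the cumulative variables $\hat{y}_{ii_j}(u_i)=\sum_{k\le j}x^*_{ii_k}(u_i)$, and bounds $\mathbb{E}_i[u_i\hat{y}_{ii_j}(u_i)]$ from below by the integral of $u_i$ over the lowest $\hat z_{i_j}$-quantile (a rearrangement-type inequality). This collapses \eqref{up22} into a finite-dimensional relaxation over the scalars $z_{i_j}=\mathbb{E}_i[\hat y_{ii_j}(u_i)]$ whose constraints are decoupled across $j$, so each $z^*_{i_j}$ is read off from a single-variable problem and equals $F_i(\mu^{ii_j}_t(\mathbf{\tilde c}_i))$; feasibility of $\hat x$ then closes the gap, exactly as in your first and third steps. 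Your route instead keeps the infinite-dimensional program and exhibits an explicit price vector $\lambda$ whose upper envelope of the reduced rewards $r_{ii_j}-\lambda_{i_j}b_{ii_j}u$ reproduces the threshold intervals, with complementary slackness supplied by the tightness of the capacity constraints at $\hat x$. What the paper's argument buys is that the cumulative-variable relaxation \eqref{up8} is reused elsewhere (the resource-independent case in Theorem \ref{TRmain1} runs the same quantile trick before invoking KKT), and it sidesteps the envelope geometry and the degenerate-threshold bookkeeping entirely; what your argument buys is a self-contained optimality certificate for \eqref{up22} itself, and as a by-product the multipliers $\lambda_{i_j}$ recover the supergradient formula for $G_i(\mathbf{\tilde c}_i,t)$ stated after Lemma \ref{Upper1} (note $\sum_{k=j}^{m}(r_{ii_k}-r_{ii_{k+1}})/(b_{ii_j}\mu^{ii_k}_t)$ is exactly your recursion unrolled). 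Your handling of the degenerate cases ($\mu_j=\infty$, coincident thresholds) is adequate; the only point worth stating explicitly is that $F_i(0)=0$, which holds because the size is assumed positive, and is needed both for your Abel summation and for the claim that the intervals partition the support.
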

The proof of Lemma \ref{lemma1101} and Lemma \ref{Upper1} is relegated to the appendix. Together, Lemma \ref{lemma1101} and Lemma \ref{Upper1} will provide a closed-form characterization of the optimal solution of the linear programming prophet upper bound \eqref{newna1}. Indeed, notice that $G_i(\mathbf{\tilde{c}}_i,t)$ is a concave function over $\mathbf{\tilde{c}}_i$, the optimization problem \eqref{up21} is a convex optimization problem, and we could use the following formula to obtain the supergradient of $G_i(\mathbf{\tilde{c}}_i,t)$ immediately:
\[
\frac{\partial G_i(\mathbf{\tilde{c}}_i,t)}{\partial \tilde{c}_{i_j}}=\sum_{k=j}^{m}\frac{r_{ii_k}-r_{ii_{k+1}}}{b_{ii_j}\cdot\mu^{ii_k}_t(\mathbf{\tilde{c}}_i)}, ~~~\forall j
\]
Thus, the optimization problem \eqref{up21} is tractable and could be solved efficiently.\\
\textbf{Remark:} when there is only one resource and the reward is unitary, i.e., $m=n=1$, our prophet upper bound \eqref{newna1} is identical to the upper bound in \cite{arlotto2018logarithmic}. However, their approach to derive the prophet upper bound is based on an observation that when there is only one resource and the reward is unitary, the hindsight optimal policy is to simply serve the requests in an increasing order of the size. Thus, the order statistics of the size of the requests is enough to characterize the hindsight optimum. Their approach could not be directly applied to a more general setting where this observation doesn't hold. Specifically, when there are multiple resources and the reward could take multiple values, the hindsight optimum is an integer programming \eqref{df3}, the solution of which should not be serving the request from the smallest size. Instead, our approach is based on directly dealing with the linear programming prophet upper bound \eqref{newna1} and show its specific threshold structure.
\subsection{Regularity Conditions}
In this subsection, we will describe the regularity conditions that the distribution functions $F_i(\cdot)$ need to satisfy. A special case of our problem where there is only one resource and the reward is unitary is identical to the dynamic and stochastic knapsack problem with equal reward studied in a stream of literature: \cite{coffman1987optimal}, \cite{bruss1991wald}, \cite{papastavrou1996dynamic}, \cite{arlotto2018logarithmic}. All these papers assumed some regularity conditions on the distribution functions to obtain their results. One may see the necessity of the regularity conditions by referring to Section 5 in \cite{papastavrou1996dynamic} which shows that one could not expect some structural properties such as the monotonicity of the optimal threshold and the concavity of the optimal value for general distributions. Specifically, \cite{arlotto2018logarithmic} proved a $O(\log T)$ regret bound for this knapsack problem by assuming the distribution function belongs to the so-called $\textit{typical~class}$. In what follows, we adopt the $\textit{typical~class}$ conditions with only one minor change which will be discussed later.
\begin{assumption}\label{assumption1}
For some $\bar{\omega}>0$ and for all $i$, the distribution function $F_i(\cdot)$ has a continuous density function $f_i(\cdot)$ and satisfies the following two conditions:
\begin{enumerate}
  \item (Behavior at Zero) There exists a constant $0<\lambda<1$ and two constants $1<\gamma_1<\gamma_2$ such that
  \begin{equation}\label{assump1}
  \gamma_1\cdot F_i(\lambda w)\leq F_i(w)\leq\gamma_2\cdot F_i(\lambda w)~~~\text{for~all~}w\in(0,\bar{\omega})
  \end{equation}
  \item (Monotonicity) The function $w:\rightarrow w^3f_i(w)$ is a non-decreasing function over $(0,\bar{\omega})$.
\end{enumerate}
\end{assumption}
The only difference between our Assumption \ref{assumption1} and the $\textit{typical~class}$ conditions in \cite{arlotto2018logarithmic} lies in condition $1$. $\textit{Typical~class}$ only assumed $F_i(w)$ to be lower bounded by $F_i(\lambda w)$, i.e., they only assumed $\gamma_1\cdot F_i(\lambda w)\leq F_i(w)$, while in our assumption, we assume $F_i(w)$ to be both lower bounded and upper bounded by $F_i(\lambda w)$. The discussions in Section 5 of \cite{arlotto2018logarithmic} showed that their $\textit{typical~class}$ conditions could cover a very broad class of continuous distributions. Moreover, it is easy to check that the following distributions they have considered continue to satisfy both conditions in Assumption \ref{assumption1}:
\begin{enumerate}
  \item Uniform distribution, exponential distribution and logit-normal distribution.
  \item Truncated normal distribution and truncated logistic distribution.
  \item Power distribution with distribution function $F(x)=Ax^\alpha$ for $A, \alpha>0$ truncated on $(0,b)$, where $b$ is any positive constant.
  \item Distributions with convex distribution function $F(\cdot)$ and continuous density function $f(\cdot)$ such that $f(0)>0$.
  \item The mixture distribution of some distributions satisfying both conditions in Assumption \ref{assumption1}.
\end{enumerate}
We show in the next lemma that condition $1$ in Assumption \ref{assumption1} will enable us to derive the following estimate over the distribution functions $F_i(\cdot)$.
\begin{lemma}\label{assumplemma1}
If there exists a continuous distribution $F(\cdot)$ such that for some $\bar{\omega}>0$, condition $1$ in Assumption \ref{assumption1} is satisfied, i.e., there exists a constant $0<\lambda<1$ and two constants $1<\gamma_1<\gamma_2$ such that
  \begin{equation}
  \gamma_1\cdot F(\lambda w)\leq F(w)\leq\gamma_2\cdot F(\lambda w)~~~\text{for~all~}w\in(0,\bar{\omega})
  \end{equation}
then there exists a constant $M>1$ such that for any $w_1, w_2$ satisfying $0<w_1<w_2<\bar{\omega}$, we have
\begin{equation}
\frac{w_2(F(w_2)-F(w_1))}{\int_{w_1}^{w_2}wdF(w)}\leq M
\end{equation}
\end{lemma}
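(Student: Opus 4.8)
The quantity to be bounded has a clean probabilistic reading: writing $\int_{w_1}^{w_2} w\,dF(w) = \bar w\cdot(F(w_2)-F(w_1))$, where $\bar w$ denotes the conditional mean of the size on the event $\{w_1 < w \le w_2\}$, the ratio in question is exactly $w_2/\bar w$. For an arbitrary continuous $F$ this need not be bounded: if essentially all of the mass on $[w_1,w_2]$ concentrates near a tiny $w_1$, then $\bar w\approx w_1$ and $w_2/\bar w$ blows up. Hence the growth condition near zero must be doing the work, and the plan is to split the argument according to whether $w_1$ is comparable to $w_2$ (the regime $w_1\ge\lambda w_2$) or much smaller than it (the regime $w_1<\lambda w_2$).

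In the first regime I would bound the integrand crudely from below by its smallest weight: since $w\ge w_1$ throughout $[w_1,w_2]$,
\[
\int_{w_1}^{w_2} w\,dF(w)\ \ge\ w_1\bigl(F(w_2)-F(w_1)\bigr)\ \ge\ \lambda w_2\bigl(F(w_2)-F(w_1)\bigr),
\]
so that the ratio is at most $1/\lambda$, with no appeal to the regularity condition at all.

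The substantive regime is $w_1<\lambda w_2$, where lower-bounding the weight by $w_1$ is useless. Here I would discard the lower part of the interval and integrate only over $[\lambda w_2, w_2]$, which is contained in $[w_1,w_2]$ by the case hypothesis; on this sub-interval $w\ge\lambda w_2$, giving
\[
\int_{w_1}^{w_2} w\,dF(w)\ \ge\ \int_{\lambda w_2}^{w_2} w\,dF(w)\ \ge\ \lambda w_2\bigl(F(w_2)-F(\lambda w_2)\bigr).
\]
The regularity condition now enters through its lower half only: $\gamma_1 F(\lambda w_2)\le F(w_2)$ (valid since $w_2<\bar\omega$) gives $F(\lambda w_2)\le F(w_2)/\gamma_1$, whence $F(w_2)-F(\lambda w_2)\ge(1-1/\gamma_1)F(w_2)$. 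Combining this with the trivial bound $w_2(F(w_2)-F(w_1))\le w_2F(w_2)$ on the numerator yields a ratio of at most $\frac{1}{\lambda(1-1/\gamma_1)}=\frac{\gamma_1}{\lambda(\gamma_1-1)}$.

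Taking $M=\frac{\gamma_1}{\lambda(\gamma_1-1)}$ then covers both regimes, and since $\gamma_1>1$ and $\lambda<1$ we indeed have $M>1/\lambda>1$. I expect the crux to be not the algebra but the idea behind the second regime: one must throw away the contribution of small $w$ and instead lower-bound the tail mass $F(w_2)-F(\lambda w_2)$ against $F(w_2)$, which is precisely what the ``behavior at zero'' condition supplies. Notably only the inequality $\gamma_1 F(\lambda w)\le F(w)$ is used, while the upper bound involving $\gamma_2$ plays no role, consistent with the paper's remark that this lower bound is the original \emph{typical class} condition. (The degenerate case $F(w_1)=F(w_2)$, in which both sides vanish, should be excluded, the statement being vacuous there.)
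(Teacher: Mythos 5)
Your proof is correct, and it takes a genuinely different and simpler route than the paper's. The paper first establishes the two-sided bound $M_1\leq \frac{wF(w)}{\int_0^w u\,dF(u)}\leq M_2$ for all $w\in(0,\bar\omega)$ --- the upper bound $M_2$ uses only the $\gamma_1$ inequality, but the lower bound $M_1$ requires the $\gamma_2$ inequality --- then chooses an integer $l$ with $M_2\leq M_1/\lambda^l$ and splits at $w_1\gtrless\lambda^l w_2$; in the hard case it bounds the numerator by $M_2\int_0^{w_2}u\,dF(u)-\frac{M_1}{\lambda^l}\int_0^{w_1}u\,dF(u)$ and absorbs the leftover term using $M_2\leq M_1/\lambda^l$. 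You instead split directly at $\lambda w_2$, and in the hard case lower-bound the denominator by the mass on $[\lambda w_2,w_2]\subseteq[w_1,w_2]$ alone, getting $\int_{w_1}^{w_2}w\,dF(w)\geq\lambda w_2(1-1/\gamma_1)F(w_2)$ and hence $M=\frac{\gamma_1}{\lambda(\gamma_1-1)}$ with an explicit, cleaner constant. What your argument buys is notable: it uses only the lower half $\gamma_1 F(\lambda w)\leq F(w)$ of condition 1, i.e., the original \emph{typical class} condition of \cite{arlotto2018logarithmic}, whereas the paper explicitly strengthened that condition (adding the $\gamma_2$ upper bound) and its proof of this lemma relies on the strengthening. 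So your proof shows the $\gamma_2$ half is not needed for this particular lemma (it is still used elsewhere, e.g., in Theorem \ref{Main2} where $\gamma_2$ determines the exponent $\alpha$). Your handling of the degenerate case $F(w_2)=0$ (where the claim is vacuous) is also appropriate. The only cosmetic remark is that the paper's displayed chain contains the typo $0<1-\gamma_1$ where $1-1/\gamma_1$ is meant; your constant matches the corrected version of the paper's $M_2$.
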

Also, we could obtain the following convex property regarding the threshold $\mu^{ii_j}_t(\mathbf{\tilde{c}}_i)$ for each $i$ and $j$ under condition $2$ in Assumption \ref{assumption1}.
\begin{lemma}\label{newlemma1}
Suppose Assumption $\ref{assumption1}$ holds, then for each $i, j$, and any $k=1,2,\dots,j$, the function $\frac{1}{\mu^{ii_j}_t(\mathbf{\tilde{c}}_i)}$ is a convex function over $\tilde{c}_{ii_k}$.
\end{lemma}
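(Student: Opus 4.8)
The plan is to reduce the statement to a one–dimensional convexity fact and then read it off from condition~2 of Assumption~\ref{assumption1}. Fix $i$, $j$, $t$ and recall from \eqref{up2} that $\mu:=\mu^{ii_j}_t(\mathbf{\tilde{c}}_i)$ depends on $\mathbf{\tilde{c}}_i$ only through the scalar
\[
y:=\frac{1}{T-t+1}\sum_{k=1}^{j}\frac{\tilde{c}_{ii_k}}{b_{ii_k}},
\]
and that for any fixed $k\le j$ this $y$ is an affine function of $\tilde{c}_{ii_k}$ with positive slope $1/\bigl(b_{ii_k}(T-t+1)\bigr)$. Since precomposition with an affine map preserves convexity, it suffices to prove that $y\mapsto 1/\mu$ is convex in $y$ on the range where \eqref{up2} admits a finite solution.

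To this end, I would write the defining relation \eqref{up2} as $H(\mu)=y$, where
\[
H(\mu):=p_i\int_{0}^{\mu}u\,dF_i(u),\qquad H'(\mu)=p_i\,\mu\, f_i(\mu).
\]
Because $f_i$ is continuous and positive on the relevant interval, $H$ is $C^1$ and strictly increasing, so $\mu=H^{-1}(y)$ is a well-defined, strictly increasing, differentiable function of $y$. Setting $\phi(y):=1/\mu=1/H^{-1}(y)$ and differentiating through the inverse,
\[
\phi'(y)=-\frac{1}{\mu^{2}}\cdot\frac{d\mu}{dy}=-\frac{1}{\mu^{2}}\cdot\frac{1}{H'(\mu)}=-\frac{1}{p_i\,\mu^{3} f_i(\mu)}.
\]

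Now I would invoke condition~2 directly. Let $P(w):=w^{3}f_i(w)$, so that $\phi'(y)=-1/\bigl(p_i P(\mu)\bigr)$ with $\mu=H^{-1}(y)$. As $y$ increases, $\mu$ increases, and by condition~2 of Assumption~\ref{assumption1} the map $w\mapsto P(w)$ is non-decreasing on $(0,\bar{\omega})$; hence $P(\mu)$ is non-decreasing in $y$, so $-1/(p_iP(\mu))$ is non-decreasing in $y$. Thus $\phi'$ is non-decreasing, i.e.\ $\phi$ is convex in $y$, and composing with the affine map above yields convexity of $1/\mu^{ii_j}_t(\mathbf{\tilde{c}}_i)$ in each $\tilde{c}_{ii_k}$, $k\le j$.

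The main point requiring care is the treatment of the domain. The argument lives on the range of $y$ for which \eqref{up2} has a solution $\mu\in(0,\bar{\omega})$, so I would first restrict to this interval, noting that the regularity conditions are only imposed there. Outside it, when the equation has no solution and $\mu=\infty$ so that $1/\mu=0$, one must check that convexity is preserved across the transition, which reduces to verifying that the one–sided derivative $\phi'$ stays ordered at the endpoint; one must also confirm that the degenerate case $f_i(\mu)=0$ does not arise inside the active range, which is precisely where $H'>0$ was used to invert $H$. These are the only technical subtleties; the substance of the lemma is the identity $\phi'(y)=-1/(p_iP(\mu))$ together with the monotonicity of $P$, which is exactly condition~2 of Assumption~\ref{assumption1}.
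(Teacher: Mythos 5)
Your proposal is correct and follows essentially the same route as the paper's proof: both differentiate the defining equation \eqref{up2} implicitly to obtain $\partial(1/\mu)/\partial\tilde{c}_{ii_k}=-1/\bigl(p_i b_{ii_k}(T-t+1)\mu^3 f_i(\mu)\bigr)$ (your version via the scalar $y$ differs only by the affine reparametrization) and then invoke the monotonicity of $w\mapsto w^3 f_i(w)$ from condition~2 of Assumption~\ref{assumption1}. Your explicit attention to the domain issues at the boundary where $\mu=\infty$ is a point the paper silently skips, so the proposal is, if anything, slightly more careful.
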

The proofs of the above two lemmas are relegated to the appendix. Together, Lemma \ref{assumplemma1} and Lemma \ref{newlemma1} would enable us to obtain the second-order upper estimate of the sensitivity of $G_i(\mathbf{\tilde{c}}_i,t)$ over $\mathbf{\tilde{c}}_i$. Indeed, given the formulation of $G_i(\mathbf{\tilde{c}}_i,t)$ in \eqref{sep03}, it is enough to consider the sensitivity of $F_i(\mu^{ii_j}_t(\mathbf{\tilde{c}}_i))$ over $\mathbf{\tilde{c}}_i$ for each $j$. The following second-order estimate generalizes the estimate in \cite{arlotto2018logarithmic} from the single-resource setting to the multi-resource setting and the proof could be found in the appendix.
\begin{lemma}\label{newlemma2}
Suppose Assumption \ref{assumption1} holds, then for each $i, j$ and any $k=1,2,\dots,j$, we have that
\begin{equation}\label{sep04}
\begin{aligned}
F_i(\mu^{ii_j}_{t}(\mathbf{\tilde{c}}_i))-F_i(\mu^{ii_j}_{t}(\mathbf{\tilde{c}}_i-b_{ii_k}u\cdot\mathbf{e}_{i_k}))
\leq\frac{(M-1)\cdot u^2}{(\tilde{c}_{ii_k}/b_{ii_k}) p_i(T-t+1)\mu^{ii_j}_{t}(\mathbf{\tilde{c}}_i)}+\frac{u}{p_i(T-t+1)\mu^{ii_j}_{t}(\mathbf{\tilde{c}}_i)}
\end{aligned}
\end{equation}
holds for any $u\in[0,\tilde{c}_{ii_k}/b_{ii_k}]$.
\end{lemma}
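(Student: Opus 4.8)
The plan is to isolate the claimed first‑order term by an exact identity and then show the leftover is genuinely second order. Write $\mu:=\mu^{ii_j}_t(\mathbf{\tilde c}_i)$ and $\mu':=\mu^{ii_j}_t(\mathbf{\tilde c}_i-b_{ii_k}u\,\mathbf{e}_{i_k})$, set $A:=\sum_{l=1}^{j}\tilde c_{ii_l}/b_{ii_l}$ and $a:=\tilde c_{ii_k}/b_{ii_k}$ (so $a\le A$), and abbreviate $\kappa:=1/(p_i(T-t+1))$. Since $k\le j$ and $u\in[0,a]$, reducing the $i_k$‑th coordinate by $b_{ii_k}u$ lowers $\tilde c_{ii_k}/b_{ii_k}$ from $a$ to $a-u$, so by \eqref{up2} we get $p_i\int_0^\mu w\,dF_i(w)=\kappa A$, $p_i\int_0^{\mu'}w\,dF_i(w)=\kappa(A-u)$, and subtracting, $p_i\int_{\mu'}^\mu w\,dF_i(w)=\kappa u$. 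The trivial identity $\mu\,(F_i(\mu)-F_i(\mu'))=\int_{\mu'}^\mu w\,dF_i(w)+\int_{\mu'}^\mu(\mu-w)\,dF_i(w)$ then gives the exact decomposition $F_i(\mu)-F_i(\mu')=\frac{u}{p_i(T-t+1)\mu}+\frac1\mu\int_{\mu'}^\mu(\mu-w)\,dF_i(w)$. The first summand is exactly the second term of the claimed bound, so it remains only to control the remainder $\frac1\mu\int_{\mu'}^\mu(\mu-w)\,dF_i(w)$.

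I would next reduce the remainder estimate to a scale‑free inequality. Because $a\le A$ and $A=p_i(T-t+1)\int_0^\mu w\,dF_i$, it suffices to prove $\int_{\mu'}^\mu(\mu-w)\,dF_i(w)\cdot\int_0^\mu w\,dF_i(w)\le (M-1)\bigl(\int_{\mu'}^\mu w\,dF_i(w)\bigr)^2$: substituting $\int_{\mu'}^\mu w\,dF_i=\kappa u$ and $\int_0^\mu w\,dF_i=\kappa A$ turns the right‑hand side into $(M-1)u^2/(A\,p_i(T-t+1))\le (M-1)u^2/(a\,p_i(T-t+1))$, which after dividing by $\mu$ is precisely the desired estimate. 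Note that a direct application of Lemma \ref{assumplemma1} yields only $\int_{\mu'}^\mu(\mu-w)\,dF_i\le (M-1)\int_{\mu'}^\mu w\,dF_i=(M-1)\kappa u$, an $O(u)$ bound; the whole point of the displayed inequality is to upgrade the linear dependence to the quadratic $u^2/A$, which is where the monotonicity of Assumption \ref{assumption1} (the source of the convexity in Lemma \ref{newlemma1}) must enter.

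To prove that inequality I would introduce the one‑parameter reduction $\mu'(\eta):=\mu^{ii_j}_t(\mathbf{\tilde c}_i-b_{ii_k}\eta\,\mathbf{e}_{i_k})$ for $\eta\in[0,A]$, so that $\mu'(0)=\mu$, the quantity $I_1(\eta):=\int_{\mu'(\eta)}^\mu w\,dF_i=\kappa\eta$ is linear in $\eta$, and $R(\eta):=\int_{\mu'(\eta)}^\mu(\mu-w)\,dF_i$ satisfies $R(0)=0$. Define $\Psi(\eta):=(M-1)\,I_1(\eta)^2/\!\int_0^\mu w\,dF_i-R(\eta)$; the goal is $\Psi(u)\ge0$. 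Differentiating \eqref{up2} gives $d\mu'/d\eta=-\kappa/(\mu'(\eta)f_i(\mu'(\eta)))$, hence $R'(\eta)=\kappa\bigl(\mu/\mu'(\eta)-1\bigr)$ and $R'(0)=0$, so $\Psi(0)=\Psi'(0)=0$; a further differentiation gives $\Psi''(\eta)=\kappa^2\bigl(2(M-1)/\!\int_0^\mu w\,dF_i-\mu/(\mu'(\eta)^3 f_i(\mu'(\eta)))\bigr)$. By condition $2$ of Assumption \ref{assumption1} the map $w\mapsto w^3 f_i(w)$ is non‑decreasing, so $\mu'(\eta)^3 f_i(\mu'(\eta))$ is non‑increasing in $\eta$ and $\Psi''$ is non‑increasing; thus $\Psi$ is convex on an initial segment and concave afterwards. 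On the convex segment $\Psi$ starts at height $0$ with zero slope, hence stays non‑negative; on the concave segment $\Psi$ lies above the chord through its endpoints, both of which are non‑negative since $\Psi(A)=M\!\int_0^\mu w\,dF_i-\mu F_i(\mu)\ge0$ by Lemma \ref{assumplemma1} with left endpoint tending to $0$. Therefore $\Psi\ge0$ on $[0,A]$, in particular $\Psi(u)\ge0$, which closes the argument.

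The main obstacle is precisely the upgrade from the linear to the quadratic estimate. The naive bound from Lemma \ref{assumplemma1} is only $O(u)$, and the obvious convexity devices (chord or trapezoidal bounds on $1/\mu'(\eta)$) are too lossy, because as $\eta\to A$ the threshold $\mu'(\eta)\to0$ and $R'(\eta)=\kappa(\mu/\mu'(\eta)-1)$ blows up, so no pointwise linear‑in‑$\eta$ bound on $R'$ can hold. The resolution is to treat the perturbation holistically through $\Psi$: the monotonicity in Assumption \ref{assumption1} forces $\Psi''$ to be monotone, which tames this singular endpoint behaviour and reduces the positivity of $\Psi$ to the two boundary checks $\Psi(0)=0$ and $\Psi(A)\ge0$, the latter being exactly the extreme case of Lemma \ref{assumplemma1}.
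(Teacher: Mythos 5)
Your proof is correct, and it takes a genuinely different route from the paper's, though both rest on the same two pillars: the monotonicity of $w\mapsto w^3f_i(w)$ (equivalently, the convexity of $1/\mu^{ii_j}_t$ from Lemma \ref{newlemma1}) and an application of Lemma \ref{assumplemma1} at an extreme capacity. The paper writes $F_i(\mu)-F_i(\mu')$ as an integral of $1/(p_ib_{ii_k}(T-t+1)\mu^{ii_j}_t(\cdot))$ over the depleted capacity, applies the trapezoidal bound from convexity, shows the auxiliary function $g(u)=\frac{p_i(T-t+1)}{u^2}(F_i(\mu)-F_i(\mu'))-\frac{1}{u\mu}$ is non-decreasing, and then bounds $g(\tilde c_{ii_k}/b_{ii_k})$ via Lemma \ref{assumplemma1} with left endpoint $\mu^{ii_j}_t(\mathbf{\tilde c}_i-\tilde c_{ii_k}\mathbf{e}_{i_k})>0$. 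You instead isolate the first-order term exactly via the identity $\mu(F_i(\mu)-F_i(\mu'))=\int_{\mu'}^{\mu}w\,dF_i+\int_{\mu'}^{\mu}(\mu-w)\,dF_i$ together with \eqref{up2}, and control the remainder $R$ by a sign analysis of $\Psi''$ (non-increasing, hence $\Psi$ convex-then-concave, hence non-negative given $\Psi(0)=\Psi'(0)=0$ and $\Psi(A)\ge 0$), pushing the endpoint all the way to $\eta=A$ where Lemma \ref{assumplemma1} with $w_1\to 0$ gives $\mu F_i(\mu)\le M\int_0^{\mu}w\,dF_i$. What your version buys: the exact decomposition makes the linear term appear with no inequality, and the endpoint at $A$ yields the slightly stronger constant $(M-1)u^2/(A\,p_i(T-t+1)\mu)$ with $A=\sum_{l\le j}\tilde c_{ii_l}/b_{ii_l}\ge \tilde c_{ii_k}/b_{ii_k}$; the tradeoff is that your sign argument for $\Psi$ is more delicate than the paper's one-line $g'\ge 0$, and you must implicitly extend $\mu^{ii_j}_t$ to $\eta\in(a,A]$ (harmless, since by \eqref{up2} the threshold depends only on the nonnegative sum $A-\eta$, but worth a sentence). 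Two cosmetic points: your first two displays carry a stray factor of $p_i$ (by \eqref{up2} it is $\int_0^{\mu}w\,dF_i=\kappa A$ and $\int_{\mu'}^{\mu}w\,dF_i=\kappa u$, not $p_i$ times these) — you use the correct normalization everywhere afterwards, so nothing propagates; and, as in the paper's own proof, the application of Lemma \ref{assumplemma1} tacitly assumes the thresholds lie below $\bar\omega$, which is only justified downstream by restricting to $t<T-K_i$.
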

\subsection{Adaptive Threshold Policy}
We now derive our adaptive threshold policy and prove the corresponding regret bound. Since the optimal solution and the sensitivity of each sub-LP \eqref{up22} could be fully characterized in \eqref{sep02} and in \eqref{sep04}, we will reduce the original online resource allocation problem into $n$ sub-problems to utilize this characterization. Specifically, we will divide each resource $j$ into $n$ parts and assign a capacity $c^*_{ij}$ to each $i$th part of resource $j$, where $\{c^*_{ij}\}$ is denoted as the optimal solution of \eqref{up21}. Then, for each request with a realized reward $\mathbf{r}_i$, we will only consider using the $i$th part of each resource to serve it. By doing so, the $i$th sub-problem is only concerned about serving request with a deterministic reward $\mathbf{r}_i$ using a separate part of each resource. Thus, each sub-problem will be independent from each other and it is enough to focus on each sub-problem separately.

For $i$th sub-problem, we will construct our control policy based on re-solving \eqref{up2} to obtain the thresholds adaptively. Specifically, at each time period $t$, denote the remaining capacity of the $i$th part of resource $j$ as $c^t_{ij}$ and denote $\mathbf{c}^t_i=[c^t_{i1}, c^t_{i2}, \dots, c^t_{im}]$, then given $\mathbf{\tilde{r}}^t=\mathbf{r}_i$ and $\mathbf{\tilde{d}}^t=\mathbf{b}_i\cdot u_i^t$, the threshold structure of the optimal solution of $G_i(\mathbf{c}^t_i, t)$ in \eqref{sep02} implies us to assign resource $i_j$ to serve the request as long as $\mu^{ii_{j-1}}_t(\mathbf{c}^t_i)<u^t_i\leq\mu^{ii_j}_t(\mathbf{c}^t_i)$. However, the actual threshold will be truncated by the remaining capacity $\mathbf{c}^t_i$. Thus, in our adaptive threshold policy, we will compare $u^t_i$ sequentially to the truncated thresholds $h^{ii_j}_t(\mathbf{c}^t_i)=\min\{\mu^{ii_j}_t(\mathbf{c}^t_i), c^t_{ii_j}/b_{ii_j}\}$ from $j=1$ to $m$ and assign resource $i_j$ to serve the request once $u^t_i\leq h^{ii_j}_t(\mathbf{c}^t_i)$. Our adaptive threshold policy is formally presented in Algorithm \ref{ATP1}.\\
\begin{algorithm}
\begin{algorithmic}[1]
\caption{Adaptive Threshold Policy ($\text{ATP}_1$)}
\label{ATP1}
\STATE Solve \eqref{up21} and denote the optimal solution as $\{\mathbf{c}^1_i\}$.
\STATE At time period $t=1,2,\dots,T$:
\STATE After request $t$ arrives and reveals itself as $(\mathbf{r}_i, \mathbf{b}_i\cdot u^t_i)$, re-solve \eqref{up2} to obtain the thresholds $\mu^{ii_j}_t(\mathbf{c}^t_i)$ for each $j$.
\STATE For each $j$, define the truncated threshold $h^{ii_j}_t(\mathbf{c}^t_i)=\min\{ c^t_{ii_j}/b_{ii_j},\mu^{ii_j}_t(\mathbf{c}^t_i) \}$ and then do the following:
\begin{enumerate}[label=(\roman*)]
  \item If for all $j$, $u^t_i>h^{ii_j}_t(\mathbf{c}^t_i)$, reject request $t$ and then set $c^{t+1}_{i'j'}=c^t_{i'j'}$ for each $i'$ and $j'$.
  \item Otherwise, compare $u^t_i$ to the truncated thresholds $h^{ii_1}_t(\mathbf{c}^t_i), h^{ii_2}_t(\mathbf{c}^t_i),\dots,h^{ii_m}_t(\mathbf{c}^t_i)$ sequentially and assign the first resource $i_j$ to serve request $t$ such that $u^t_i\leq h^{ii_j}_t(\mathbf{c}^t_i)$. Then set $c^{t+1}_{ii_j}=c^t_{ii_j}-b_{ii_j}\cdot u^t_i$ and set $c^{t+1}_{i'j'}=c^t_{i'j'}$ for all other $(i', j')$ such that $(i', j')\neq(i,i_j)$.
\end{enumerate}
\end{algorithmic}
\end{algorithm}
Next, we discuss the performance of our policy. For each $i$, denote $V^{\text{ATP}_1}_i(\mathbf{I})$ as the total reward collected by Algorithm \ref{ATP1} for the $i$th sub-problem, i.e., the total reward collected from serving the requests with a realized reward $\mathbf{r}_i$, then we have
\begin{equation}\label{PR4}
\mathbb{E}[V^{\text{ATP}_1}(\mathbf{I})]=\sum_{i=1}^{n}\mathbb{E}[V^{\text{ATP}_1}_i(\mathbf{I})]
\end{equation}
Thus, from \eqref{up21}, the regret of Algorithm \ref{ATP1} could be expressed as the summation of the regret of each sub-problem:
\begin{equation}\label{v2PR1}
\text{Regret}(\text{ATP}_1)\leq\sum_{i=1}^{n}G_i(\mathbf{c}^*_i,1)-\mathbb{E}[V^{\text{ATP}_1}_i(\mathbf{I})]
\end{equation}
where $\{\mathbf{c}^*_i\}$ denotes the optimal solution of \eqref{up21}. For each $i$, we have the following upper bound of the term $G_i(\mathbf{c}^*_i,1)-\mathbb{E}[V^{\text{ATP}_1}_i(\mathbf{I})]$.
\begin{lemma}\label{Main1}
Suppose Assumption \ref{assumption1} holds and let $K_i$ be a constant defined as follows:
\begin{equation}\label{PR2}
K_i=\left\lceil \frac{\sum_{j=1}^{m}c^*_{ii_j}/b_{ii_j}}{p_i\cdot\int_{0}^{\bar{\omega}}wdF_i(w)} \right\rceil
\end{equation}
Then, we have the following bound:
\begin{equation}\label{maineq1}
\begin{aligned}
G_i(\mathbf{c}_i^*,1)-\mathbb{E}[V^{\text{ATP}_1}_i(\mathbf{I})]&\leq \sum_{j=1}^{m}r_{ii_j}[(j+1)M-j](\log T+1)+r_{\max}^i\cdot K_i=O(m^2\log T)
\end{aligned}
\end{equation}
where $r_{\max}^i=\max\{r_{i1},r_{i2},\dots,r_{im}\}$ and $M$ is the constant defined in Lemma \ref{assumplemma1}.
\end{lemma}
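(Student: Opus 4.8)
The plan is to control the per-sub-problem gap $G_i(\mathbf{c}^*_i,1)-\mathbb{E}[V^{\text{ATP}_1}_i(\mathbf{I})]$ by a one-step telescoping argument driven by the closed form \eqref{sep03} and the second-order estimate \eqref{sep04}. Writing $s=T-t+1$, letting $\rho^t_i$ denote the reward Algorithm \ref{ATP1} collects in sub-problem $i$ at period $t$, and setting $G_i(\cdot,T+1):=0$ with $\mathbf{c}^1_i=\mathbf{c}^*_i$, I would first telescope
\[
G_i(\mathbf{c}^*_i,1)-\mathbb{E}[V^{\text{ATP}_1}_i(\mathbf{I})]=\sum_{t=1}^{T}\mathbb{E}\Big[G_i(\mathbf{c}^t_i,t)-G_i(\mathbf{c}^{t+1}_i,t+1)-\rho^t_i\Big],
\]
where each inner expectation is over the arrival at period $t$ given $\mathbf{c}^t_i$. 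The goal is to show that, away from an end-game regime, each summand is of order $1/s$, so that $\sum_t 1/s=\sum_{s=1}^{T}1/s\le \log T+1$ produces the logarithmic factor.

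The engine is the homogeneity of $G_i$ visible in \eqref{up2}--\eqref{sep03}: since $\mu^{ii_j}_t(\mathbf{c})$ depends on $(\mathbf{c},t)$ only through $S_j(\mathbf{c})/s$ with $S_j(\mathbf{c})=\sum_{k=1}^{j}c_{ii_k}/b_{ii_k}$, one obtains $G_i(\mathbf{c},t)=\tfrac{s}{s-1}G_i\!\big(\tfrac{s-1}{s}\mathbf{c},t+1\big)$, hence $G_i(\mathbf{c},t)-G_i\!\big(\tfrac{s-1}{s}\mathbf{c},t+1\big)=\tfrac1s G_i(\mathbf{c},t)$. By \eqref{sep03} and summation by parts this per-period fluid value equals $p_i\sum_j r_{ii_j}\big(F_i(\mu^{ii_j}_t)-F_i(\mu^{ii_{j-1}}_t)\big)$, which is exactly $\mathbb{E}[\rho^t_i\mid\mathbf{c}^t_i]$ when no truncation binds. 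Moreover \eqref{up2} gives $p_i\int_{\mu^{ii_{j-1}}_t}^{\mu^{ii_j}_t}u\,dF_i(u)=(c^t_{ii_j}/b_{ii_j})/s$, so the expected one-period consumption of the $i_j$-part is $c^t_{ii_j}/s$ and therefore $\mathbb{E}[\mathbf{c}^{t+1}_i\mid\mathbf{c}^t_i]=\tfrac{s-1}{s}\mathbf{c}^t_i$ in the interior. Adding and subtracting $G_i\!\big(\tfrac{s-1}{s}\mathbf{c}^t_i,t+1\big)$, the summand collapses to the Jensen-type discrepancy $G_i\!\big(\tfrac{s-1}{s}\mathbf{c}^t_i,t+1\big)-\mathbb{E}[G_i(\mathbf{c}^{t+1}_i,t+1)]$: the fluid (first-order) terms cancel exactly and only the curvature of $G_i$ survives.

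It then remains to bound this curvature term. Concavity of $G_i(\cdot,t+1)$ shows the discrepancy is nonnegative; for the matching upper bound I would expand $\mathbb{E}[G_i(\mathbf{c}^{t+1}_i,t+1)]$ through \eqref{sep03}, writing each decrement $G_i(\mathbf{c}^t_i,t+1)-G_i(\mathbf{c}^t_i-b_{ii_k}u\,\mathbf{e}_{i_k},t+1)$ as a sum over the thresholds $\mu^{ii_j}$ with $j\ge k$ (the only ones a reduction of the $i_k$-part perturbs) and applying \eqref{sep04} with $s$ replaced by $s-1$. The linear-in-$u$ part of \eqref{sep04}, integrated against $dF_i$ over the service intervals via \eqref{up2}, reproduces the fluid decrement and cancels, while the $(M-1)u^2$ part is the genuine second-order remainder; the prefactor $s$ in \eqref{sep03} absorbs the $1/s$ in \eqref{sep04}, leaving a period-$t$ contribution of order $\tfrac1s\sum_j r_{ii_j}[(j+1)M-j]$. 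Careful bookkeeping of the reward differences $r_{ii_j}-r_{ii_{j+1}}$ across the threshold/resource interactions attached to each $r_{ii_j}$ is what produces exactly this constant, and summation over $t$ yields the first term $\sum_j r_{ii_j}[(j+1)M-j](\log T+1)$.

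Finally I would isolate the end-game. The interior identities require the active thresholds to lie in the regularity window $(0,\bar{\omega})$ where \eqref{assump1} and hence \eqref{sep04} apply, and the truncations $h^{ii_j}_t=\min\{c^t_{ii_j}/b_{ii_j},\mu^{ii_j}_t\}$ not to bind; by \eqref{up2} the former holds as long as $s>S_m(\mathbf{c}^t_i)/\big(p_i\int_0^{\bar\omega}w\,dF_i(w)\big)$, which, using $S_m(\mathbf{c}^t_i)\le\sum_j c^*_{ii_j}/b_{ii_j}$, fails only for at most $K_i$ periods. On those periods I would bound the contribution crudely by $r_{\max}^i$ (no more can be collected and $G_i$ is monotone), giving the additive $r_{\max}^i\cdot K_i$ term. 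The main obstacle is the third paragraph: making the first-order cancellation and the second-order remainder rigorous simultaneously across all $m$ coupled thresholds and all reachable adaptive states $\mathbf{c}^t_i$, since $G_i$ is only piecewise smooth in $\mathbf{c}$ and \eqref{sep04} must be summed so as to both telescope the linear terms and accumulate the quadratic ones into the stated constant; cleanly separating the interior periods from the $K_i$ boundary periods, where truncation perturbs the consumption identity, is the second delicate point.
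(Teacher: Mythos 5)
Your overall architecture is the paper's: the paper works with $\rho_t(\mathbf{c}^t_i)=G_i(\mathbf{c}^t_i,t)-V_t(\mathbf{c}^t_i)$ and a one-step Bellman recursion whose correction terms are exactly your per-period summands; the first-order cancellation you extract from the homogeneity $G_i(\mathbf{c},t)=\tfrac{s}{s-1}G_i(\tfrac{s-1}{s}\mathbf{c},t+1)$ is the paper's cancellation of its terms II and V, the quadratic remainder driven by \eqref{sep04} is its term IV, and the $r^i_{\max}\cdot K_i$ endgame is handled identically. The genuine gap is in how you dispose of truncation. You fold the event ``$h^{ii_j}_t=\min\{c^t_{ii_j}/b_{ii_j},\mu^{ii_j}_t\}$ binds'' into the last $K_i$ periods, but only the condition $\mu^{ii_j}_t<\bar\omega$ is an endgame phenomenon controlled by $K_i$. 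Truncation by remaining capacity can bind at \emph{any} period $t$ with positive probability --- whenever realized consumption has depleted some part $i_j$ faster than the fluid rate, $c^t_{ii_j}/b_{ii_j}$ falls below $\mu^{ii_j}_t(\mathbf{c}^t_i)$ --- and on such periods both your reward identity (collected reward equals the fluid value $\tfrac1s G_i(\mathbf{c}^t_i,t)$) and your consumption identity $\mathbb{E}[\mathbf{c}^{t+1}_i\mid\mathbf{c}^t_i]=\tfrac{s-1}{s}\mathbf{c}^t_i$ fail, so the summand does not collapse to a pure Jensen discrepancy. Bounding these periods crudely by $r^i_{\max}$ each would not give \eqref{maineq1}, since their number is not bounded by $K_i$.

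The paper spends the bulk of its proof on precisely this point. It reorders the truncated thresholds into a monotone subsequence $\{j_1,\dots,j_W\}$ (which is what actually governs the policy's acceptance regions), and proves by induction over $w$ (the claims culminating in \eqref{pfap8} and \eqref{pfap12}, both resting on the two-sided estimate of Lemma \ref{assumplemma1} applied to intervals $[w_1,w_2]$ with $w_1>0$) that the per-period reward lost to truncation is at most $\tfrac{M}{T-t+1}\sum_j r_{ii_j}$. This term is where the extra ``$M$'' in the coefficient $(j+1)M-j=M+j(M-1)$ comes from; your plan accounts only for the $j(M-1)$ part arising from the quadratic remainder of \eqref{sep04}. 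Relatedly, once truncation is present the first-order cancellation must be arranged as an inequality --- the consumption integral over the truncated intervals $[h^{ij_{w-1}}_t,h^{ij_w}_t]$ is dominated by the fluid integral $\int_0^{\mu^{ii_k}_t}u\,dF_i(u)$, so the positive first-order piece is at most the negative drift of the thresholds --- rather than as an exact identity. To repair your argument you would need to reinstate this truncation analysis at every period, which is essentially the content of the paper's term I and its supporting claims.
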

The proof of Lemma \ref{Main1} is based on first deriving the recursion of the "to-go" expected reward of the policy given the current time period $t$ and the remaining capacity $\mathbf{c}_i^t$, denoted as $V_t(\mathbf{c}_i^t)$, in the following way:
\[
V_t(\mathbf{c}^t_i)=(1-p_i\cdot F(h_t^{ij_W}(\mathbf{c}^t_i)))\cdot V_{t+1}(\mathbf{c}_i^t)+p_i\cdot\sum_{w=1}^{W}\int_{h_t^{ij_{w-1}}(\mathbf{c}_i^t)}^{h_t^{ij_w}(\mathbf{c}_i^t)}\{ \tr_{j_w}+V_{t+1}(\mathbf{c}_i^t-b_{ij_w}u\cdot\mathbf{e}_{j_w})\}dF_i(u)
\]
where $\{h_t^{ij_w}(\mathbf{c}_i^t)\}$ denotes the monotone increasing subsequence of $\{h_t^{ii_j}(\mathbf{c}_i^t)\}$. Note that although the thresholds $\{\mu_t^{ii_j}(\mathbf{c}_i^t)\}$ is monotone increasing in $j$, the actual thresholds $\{h_t^{ii_j}(\mathbf{c}_i^t)\}$ involved in our policy may not be monotone due to the truncation of the remaining capacity, thus, only the monotone subsequence $\{h_t^{ij_w}(\mathbf{c}_i^t)\}$ will show up in the above recursion. Then denoting $\rho_t(\mathbf{c}_i^t)=G_i(\mathbf{c}_i^t,t)-V_t(\mathbf{c}^t_i)$, we could substitute $\rho_t(\mathbf{c}_i^t)$ into the above formula and obtain a recursion over $\rho_t(\mathbf{c}_i^t)$. We will use the second-order upper estimate in Lemma \ref{newlemma2} to upper bound each term in the recursion, and together with an analysis on the influence of truncation, i.e., comparing the subsequence $\{h_t^{ij_w}(\mathbf{c}_i^t)\}$ with $\{\mu_t^{ii_j}(\mathbf{c}_i^t)\}$, we could show that the first-order term will cancel out in the recursion and only the second-order term will be left. Thus, we show that for any $\mathbf{c}_i^t$, we have $\rho_t(\mathbf{c}_i^t)= O(\sum_{t'=T-t+1}^{T} \frac{1}{t'})$, which implies that $\rho_1(\mathbf{c}^*_i)=O(\log T)$. The complete proof is relegated to the appendix. Then, from \eqref{v2PR1}, we could establish the following regret bound of Algorithm \ref{ATP1}.
\begin{theorem}\label{Main3}
Suppose Assumption \ref{assumption1} hold, then we have the following regret bound:
\begin{equation}\label{PR9}
\begin{aligned}
%\text{Regret}(\text{ATP}_1)&=H^{\text{off}}(\mathbf{c},\mathbf{p},\mathbf{F},\mathbf{R},T)-H^{\text{ATP}_1}(\mathbf{c},\mathbf{p},\mathbf{F},\mathbf{R},T)\\
\text{Regret}(\text{ATP}_1)\leq \sum_{i=1}^{n}\sum_{j=1}^{m}r_{ii_j}[(j+1)M-j](\log T+1)+\sum_{i=1}^{n}r^i_{\max}\cdot K_i=O(nm^2\log T)
\end{aligned}
\end{equation}
\end{theorem}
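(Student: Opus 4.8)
The plan is to obtain Theorem~\ref{Main3} as an immediate consequence of the per-sub-problem bound in Lemma~\ref{Main1}, after first justifying the additive regret decomposition \eqref{v2PR1}. The decomposition rests on three ingredients that are already available. First, Lemma~\ref{newlemma10} gives $\mathbb{E}[V^{\text{off}}(\mathbf{I})]\le\text{LP}^{\text{UB}}$, so the prophet is controlled by the linear program \eqref{newna1}. Second, Lemma~\ref{lemma1101} rewrites $\text{LP}^{\text{UB}}$ as the optimal value of the concave program \eqref{up21}, whose optimizer is the capacity split $\{c^*_{ij}\}$; hence $\text{LP}^{\text{UB}}=\sum_{i=1}^{n}G_i(\mathbf{c}^*_i,1)$. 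Third, because Algorithm~\ref{ATP1} dedicates the $i$th part of each resource (with initial capacity $c^*_{ij}$) exclusively to requests whose reward realizes as $\mathbf{r}_i$, the $n$ sub-problems never interact, and the collected reward splits additively as in \eqref{PR4}, namely $\mathbb{E}[V^{\text{ATP}_1}(\mathbf{I})]=\sum_{i=1}^{n}\mathbb{E}[V^{\text{ATP}_1}_i(\mathbf{I})]$.

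Combining these, I would write $\text{Regret}(\text{ATP}_1)=\mathbb{E}[V^{\text{off}}(\mathbf{I})]-\mathbb{E}[V^{\text{ATP}_1}(\mathbf{I})]\le\sum_{i=1}^{n}\bigl(G_i(\mathbf{c}^*_i,1)-\mathbb{E}[V^{\text{ATP}_1}_i(\mathbf{I})]\bigr)$, which is precisely \eqref{v2PR1}. The key structural point to emphasize here is that the \emph{same} split $\{c^*_{ij}\}$ is used both in the prophet upper bound and in the algorithm: this is what lets each summand be matched against a single sub-LP $G_i(\mathbf{c}^*_i,1)$ rather than against a sub-LP optimized separately for sub-problem $i$, so that no slack is lost in aggregating the bounds.

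The second step is purely arithmetic: apply Lemma~\ref{Main1} to each of the $n$ summands. Since Assumption~\ref{assumption1} is in force, that lemma yields, for every $i$, the inequality $G_i(\mathbf{c}^*_i,1)-\mathbb{E}[V^{\text{ATP}_1}_i(\mathbf{I})]\le\sum_{j=1}^{m}r_{ii_j}[(j+1)M-j](\log T+1)+r^i_{\max}\cdot K_i$. Summing over $i=1,\dots,n$ and substituting into \eqref{v2PR1} gives the explicit bound stated in \eqref{PR9}. For the order estimate, each per-sub-problem bound is $O(m^2\log T)$ (the coefficient $\sum_{j=1}^{m}[(j+1)M-j]$ is $\Theta(m^2)$ with $M$ a constant, and $r^i_{\max}\cdot K_i$ is $T$-independent), and there are $n$ sub-problems, so the total is $O(nm^2\log T)$.

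In terms of difficulty, there is no genuine obstacle remaining at this stage: all the analytic work — the threshold characterization \eqref{sep02}--\eqref{sep03}, the second-order sensitivity estimate of Lemma~\ref{newlemma2}, and the recursion argument that cancels the first-order terms and leaves a $\sum_{t'}1/t'=O(\log T)$ tail — is carried out inside Lemma~\ref{Main1}. The only point demanding care is the validity of the decomposition, i.e.\ confirming that fixing the resource split at the LP optimum $\{c^*_{ij}\}$ does not degrade the prophet comparison; this is guaranteed by Lemma~\ref{lemma1101}, which certifies that this split attains $\text{LP}^{\text{UB}}$ exactly. Given that, the theorem follows by linearity of expectation and summation over the $n$ sub-problems.
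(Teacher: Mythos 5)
Your proposal is correct and follows essentially the same route as the paper: the theorem is obtained by combining the decomposition \eqref{v2PR1} (justified via Lemma \ref{newlemma10}, Lemma \ref{lemma1101}, and the additive split \eqref{PR4} induced by dedicating the capacity split $\{c^*_{ij}\}$ to each sub-problem) with the per-sub-problem bound of Lemma \ref{Main1}, summed over $i$. The order estimate $O(nm^2\log T)$ is also accounted for correctly.
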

It is interesting to note that the re-solving procedure for each sub-problem in Algorithm \ref{ATP1} has also been applied to designing near-optimal policy for network revenue management problem with a so-called "non-degeneracy" assumption in \cite{jasin2012re} and following-up work \cite{jasin2015performance}. We will conclude this subsection by making a comparison between our approach and their approach in the following remark.\\
\textbf{Remark:} a similarity between our approach and the approach in \cite{jasin2012re} and \cite{jasin2015performance} is that the optimal solution for the LP upper bound could be expressed in closed-form via the remaining capacity of each resource. However, our regret analysis is fundamentally different from theirs. A key step in their analysis is that they show under the "non-degeneracy" assumption, the closed-form expression could be obtained provided that $\frac{c^t_j}{T-t+1}$ for all $j$, where $c^t_j$ denotes the remaining capacity of resource $j$ at time $t$, doesn't variate too much from $\frac{c_j}{T}$ so that the optimal bases for the LP upper bound doesn't change. Then they show that this condition could be guaranteed for a sufficiently long time in their policy by constructing a martingale involving $\frac{c^t_j}{T-t+1}$ and applying the Doob's inequality for martingale. As a result, their regret bound is obtained when the ratio $\frac{c_j}{T}$ is fixed and their approach could not be directly applied to the setting where $c_j$ could grow sublinearly in $T$. In contrast, our closed-form expression is obtained for any $c^t_j$ and our regret analysis is based on considering the curvature of the distribution function of the size instead of constructing a martingale, thus our approach works for the setting where $c_j$ is fixed and $T\rightarrow\infty$. We also need to analyze the influence of the fact that the actual threshold is truncated by the remaining capacity, which is not needed in their analysis since $c_j$ grows linearly in $T$ thus $c^t_j$ will always exceeds the size of the request except for the last few time periods. Moreover, the re-solving procedure in \cite{jasin2012re} and \cite{jasin2015performance} requires solving a LP, which may be time consuming, while our re-solving procedure in Algorithm \ref{ATP1} could be done quickly with the closed-form characterization \eqref{up2}.

\subsection{Asymptotic Optimality}
In this subsection we show how does $\mathbb{E}[V^{\text{off}}(\mathbf{I})]$ scale over $T$. Our next theorem shows that $\mathbb{E}[V^{\text{off}}(\mathbf{I})]=\Omega(T^{\frac{1}{1+\alpha}})$, where $\alpha$ is a positive constant depending on $\lambda, \gamma_2$ in Assumption \ref{assumption1}. Thus, we immediately show the asymptotic optimality of our adaptive threshold policy as $T\rightarrow+\infty$.
\begin{theorem}\label{Main2}
Suppose Assumption \ref{newassump1} and Assumption \ref{assumption1} hold, then we have
\begin{equation}\label{maineq2}
\mathbb{E}[V^{\text{off}}(\mathbf{I})]=\Omega(T^{\frac{1}{1+\alpha}})
\end{equation}
where $\alpha$ is a positive constant satisfying $\frac{1}{\gamma_2}=\lambda^\alpha$.
\end{theorem}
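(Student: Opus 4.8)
The plan is to deduce the claimed lower bound on the offline optimum from a matching lower bound on the linear program $\text{LP}^{\text{UB}}$ of \eqref{newna1}, and then to obtain the latter by exhibiting a single, deliberately crude feasible solution. For the reduction, recall that the offline optimum dominates the expected reward of any feasible online policy, since on every sample path $\mathbf{I}$ the realized assignment produced by $\text{ATP}_1$ is feasible for \eqref{df3}; hence $\mathbb{E}[V^{\text{off}}(\mathbf{I})]\ge \mathbb{E}[V^{\text{ATP}_1}(\mathbf{I})]$. Summing the per-sub-problem bound of Lemma \ref{Main1} over $i$ (as in \eqref{v2PR1}) and using $\sum_i G_i(\mathbf{c}^*_i,1)=\text{LP}^{\text{UB}}$ from Lemma \ref{lemma1101} gives $\mathbb{E}[V^{\text{ATP}_1}(\mathbf{I})]\ge \text{LP}^{\text{UB}}-O(nm^2\log T)$, so $\mathbb{E}[V^{\text{off}}(\mathbf{I})]\ge \text{LP}^{\text{UB}}-O(nm^2\log T)$. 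Because $1/(1+\alpha)>0$ forces $\log T=o(T^{1/(1+\alpha)})$, it suffices to prove $\text{LP}^{\text{UB}}=\Omega(T^{1/(1+\alpha)})$.

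To lower bound $\text{LP}^{\text{UB}}$ I would plug a single active variable into \eqref{newna1}: fix one index $i$ with $p_i>0$ and one resource $j$ with $r_{ij}>0$, set $x_{ij}(u)=\mathbf{1}\{u\le\mu\}$ and $x_{i'j'}\equiv 0$ otherwise, where the threshold $\mu=\mu(T)$ is chosen to saturate the one active capacity constraint, $T\,p_i\,b_{ij}\int_0^{\mu}u\,dF_i(u)=c_j$. This assignment is feasible, and its objective value equals exactly $T\,p_i\,r_{ij}\,F_i(\mu)$. As $T\to\infty$ the saturation equation forces $\int_0^{\mu}u\,dF_i(u)\to 0$, hence $\mu\to 0$, so for all large $T$ we operate entirely inside the window $(0,\bar\omega)$ on which Assumption \ref{assumption1} is in force.

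The heart of the argument is to show $F_i(\mu)=\Omega(T^{-\alpha/(1+\alpha)})$. First I would convert the \emph{upper} half of the behavior-at-zero condition \eqref{assump1} into a polynomial lower bound on $F_i$ near zero: iterating $F_i(w)\le\gamma_2 F_i(\lambda w)$ yields $F_i(\lambda^k w_0)\ge \gamma_2^{-k}F_i(w_0)$ for a fixed reference $w_0\in(0,\bar\omega)$, and since $\gamma_2^{-1}=\lambda^{\alpha}$ this reads $F_i(\lambda^k w_0)\ge(\lambda^k)^{\alpha}F_i(w_0)$; interpolating over $(\lambda^{k+1}w_0,\lambda^k w_0]$ via monotonicity of $F_i$ produces a constant $C>0$ with $F_i(w)\ge C\,w^{\alpha}$ for all $w\in(0,w_0)$. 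Second, the elementary estimate $\int_0^{\mu}u\,dF_i(u)\le \mu F_i(\mu)$ together with the saturation equation gives $\mu F_i(\mu)\ge c_j/(T p_i b_{ij})=:\delta$, while $F_i(\mu)\ge C\mu^{\alpha}$ rearranges to $\mu\le (F_i(\mu)/C)^{1/\alpha}$. Eliminating $\mu$ by substituting the second into the first, $\delta\le \mu F_i(\mu)\le C^{-1/\alpha}F_i(\mu)^{(1+\alpha)/\alpha}$, whence $F_i(\mu)\ge (C^{1/\alpha}\delta)^{\alpha/(1+\alpha)}=\Omega(T^{-\alpha/(1+\alpha)})$. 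Substituting back, the objective $T\,p_i\,r_{ij}\,F_i(\mu)=\Omega(T^{1-\alpha/(1+\alpha)})=\Omega(T^{1/(1+\alpha)})$, as required.

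The step I expect to be most delicate is the exponent bookkeeping in this elimination: the precise power $1/(1+\alpha)$ emerges only because the two one-sided estimates, $\int_0^{\mu}u\,dF_i\le\mu F_i(\mu)$ and $F_i(w)\ge Cw^{\alpha}$, are combined in the correct direction, so I would take care to chain the inequalities as $\delta\le\mu F_i(\mu)\le C^{-1/\alpha}F_i(\mu)^{(1+\alpha)/\alpha}$ rather than the reverse. The polynomial lower bound itself is routine once the geometric iteration is set up; the only additional care needed is to check $\mu(T)<w_0<\bar\omega$ for all sufficiently large $T$ (guaranteed by $\mu\to 0$) and to absorb the constants $C,c_j,p_i,b_{ij}$ into $\Omega(\cdot)$ without perturbing the exponent. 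It is worth emphasizing that only the upper inequality in \eqref{assump1} is used, which is consistent with $\alpha$ depending solely on $\lambda$ and $\gamma_2$ through $1/\gamma_2=\lambda^{\alpha}$.
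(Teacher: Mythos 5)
Your proof is correct, and while the outer reduction coincides with the paper's, the core estimate is carried out by a genuinely different and more elementary route. Both arguments pass through $\mathbb{E}[V^{\text{off}}(\mathbf{I})]\ge\mathbb{E}[V^{\text{ATP}_1}(\mathbf{I})]\ge\text{LP}^{\text{UB}}-O(\log T)$ and then reduce to showing that a term of the form $T\,p_i\,r\,F_i(\mu)$, with $\mu$ determined by $p_i\int_0^{\mu}u\,dF_i(u)=\Theta(1/T)$, is $\Omega(T^{1/(1+\alpha)})$. The paper does this by working with the optimal allocation $\{\mathbf{c}^*_i\}$ of \eqref{up21}, choosing by pigeonhole a pair $(i,i_j)$ with $\sum_{k\le j}c^*_{ii_k}\ge(\sum_j c_j)/n$, locating $\mu_T$ in a dyadic window $[\lambda^l\bar\omega,\lambda^{l-1}\bar\omega]$, lower-bounding $\int_0^{\mu_T}u\,dF_i$ by a geometric series of shell integrals (which uses the \emph{lower} half $\gamma_1F_i(\lambda w)\le F_i(w)$ of \eqref{assump1}), and then invoking the constant $M_1$ from the proof of Lemma \ref{assumplemma1} to pass from the integral back to $F_i(\mu_T)$. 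You instead exhibit an explicit single-threshold feasible point of \eqref{newna1} — legitimate since only a lower bound on a maximum is needed, and it spares you both the pigeonhole step and the $n^{-\alpha/(1+\alpha)}$ degradation in the paper's constant — derive the pointwise power law $F_i(w)\ge Cw^{\alpha}$ by iterating only the \emph{upper} half $F_i(w)\le\gamma_2F_i(\lambda w)$, and eliminate $\mu$ via the trivial inequality $\int_0^{\mu}u\,dF_i\le\mu F_i(\mu)$; the exponent arithmetic you carry out is correct. Your version makes transparent that only $\lambda$ and $\gamma_2$ enter, whereas the paper's route consumes both halves of \eqref{assump1}. The only things worth stating explicitly in a polished write-up are the nondegeneracy that some $i$ with $p_i>0$ and some $j$ with $r_{ij}>0$ exist and that $F_i$ is not identically zero near the origin (the paper's proof needs the same implicitly), and the solvability of the saturation equation with $\mu<\bar\omega$ for all large $T$, which you already address.
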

%While the above lower bound depends on the parameters $\lambda, \gamma_1, \gamma_2$ in Assumption \ref{assumption1}, we could obtain another lower bound independent of these parameters by making a further assumption on the density function $f_i(\cdot)$. Then simply by applying the continuity of $f_i(\cdot)$, we could lower bound the prophet by $\Omega(\sqrt{T})$.
%\begin{theorem}\label{Main4}
%Suppose Assumption \ref{assumption1} holds and there exists a $i$ such that $f_i(0)>0$, then we have
%\begin{equation}\label{maineq44}
%\mathbb{E}[V^{\text{off}}(\mathbf{I})]=\Omega(\sqrt{T})
%\end{equation}
%\end{theorem}
\section{Regret Bound for Resource-independent Case}
The main purpose of this section is to study whether the term $n$ in our previous $O(\log T)$ regret bound could be removed and whether we could directly deal with our online resource allocation problem without reducing the problem into $n$ separate problems. We show that under a special case where both the reward and the size of each request are resource-independent, we can indeed derive a similar adaptive threshold policy to achieve a $O(\log T)$ regret bound without any dependence on $n$. The main difference of our analysis from the previous analysis is that we directly show the threshold structure of the prophet upper bound without dividing it into $n$ sub-LPs. As a result, we could consider our online resource allocation problem as a whole without any reduction. Although we only analyze the case where the reward distribution has a finite support to keep the notation in consistent with the previous sections, our analysis could be directly applied to the case where the support of the reward distribution is infinite and our regret bound continues to hold. In addition to Assumption \ref{newassump1}, we further make the following assumption:
\begin{assumption}\label{newassump3}
For each $i$, $r_{ij}=r_i$ for every $j$ and $\mathbf{b}_i$ is an all-one vector .
\end{assumption}
In what follows, we will first show the threshold structure of our prophet upper bound and then derive our policy and prove the corresponding regret bound.
\begin{theorem}\label{TRmain1}
We have the following upper bound of the expected total reward collected by the prophet:
\begin{equation}\label{TR1}
\mathbb{E}[V^{\text{off}}(\mathbf{I})]\leq T\cdot\sum_{i=1}^{n}p_i\cdot r_i\cdot F_i(r_i\cdot\mu(\mathbf{c}))
\end{equation}
where the threshold $\mu(\mathbf{c})$ is obtained by solving the following equation:
\begin{equation}\label{TR2}
\sum_{i=1}^{n}p_i\cdot \int_{0}^{r_i\cdot\mu(\mathbf{c})}u_idF_i(u_i)=\frac{\sum_{j=1}^{m}c_j}{T}
\end{equation}
If there is no $\mu(\mathbf{c})$ satisfying the above equation, we set $\mu(\mathbf{c})=+\infty$.
\end{theorem}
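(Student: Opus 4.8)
The plan is to bound the prophet by the linear program $\text{LP}^{\text{UB}}$ via Lemma~\ref{newlemma10}, and then to show that, under Assumption~\ref{newassump3}, a one-constraint relaxation of $\text{LP}^{\text{UB}}$ already equals the right-hand side of \eqref{TR1}. Concretely, since $r_{ij}=r_i$ and $b_{ij}=1$ for all $j$, the objective of $\text{LP}^{\text{UB}}$ reads $T\sum_{i} p_i r_i\,\mathbb{E}_i[\sum_{j} x_{ij}(u_i)]$ and the $j$-th capacity constraint reads $T\sum_i p_i\,\mathbb{E}_i[u_i x_{ij}(u_i)]\le c_j$. This structure suggests introducing the aggregated variable $X_i(u_i)=\sum_{j=1}^m x_{ij}(u_i)$, which collapses the $m$ resources into a single pooled budget.

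First I would verify that aggregation produces a valid relaxation, i.e.\ that it enlarges the feasible region and hence preserves the direction of the bound. Given any feasible $\{x_{ij}\}$, the variable $X_i$ satisfies $0\le X_i(u_i)\le 1$ because $\sum_j x_{ij}(u_i)\le 1$; the objective is unchanged since $\sum_j r_i x_{ij}=r_i X_i$; and summing the $m$ capacity constraints over $j$ yields the single constraint $T\sum_i p_i\,\mathbb{E}_i[u_i X_i(u_i)]\le\sum_j c_j$. Hence the optimal value of the single-constraint problem
\[
\max_{0\le X_i\le 1}\ T\sum_{i=1}^n p_i r_i\,\mathbb{E}_i[X_i(u_i)]\quad\text{s.t.}\quad T\sum_{i=1}^n p_i\,\mathbb{E}_i[u_i X_i(u_i)]\le\sum_{j=1}^m c_j
\]
is an upper bound for $\text{LP}^{\text{UB}}$, and therefore, by Lemma~\ref{newlemma10}, for $\mathbb{E}[V^{\text{off}}(\mathbf{I})]$.

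Second I would solve this relaxation, which is a continuous (fractional) knapsack with a single budget $\sum_j c_j$. Introducing a multiplier $\theta\ge 0$ for the budget constraint, the Lagrangian decouples pointwise into $T\sum_i p_i\,\mathbb{E}_i[(r_i-\theta u_i)X_i(u_i)]$, whose maximizer over $X_i\in[0,1]$ sets $X_i(u_i)=1$ exactly when $r_i-\theta u_i>0$, i.e.\ when $u_i\le r_i/\theta$. Writing $\mu(\mathbf{c})=1/\theta$, this is the threshold rule ``accept $(i,u_i)$ iff $u_i\le r_i\mu(\mathbf{c})$''; choosing $\theta$ (equivalently $\mu(\mathbf{c})$) so that the budget binds gives precisely equation~\eqref{TR2}, while if the budget cannot be exhausted one takes $\mu(\mathbf{c})=+\infty$ and accepts every request. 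By weak LP duality (or a direct exchange argument on the value-to-size ratios $r_i/u_i$) this threshold solution is optimal, and substituting it into the objective gives $T\sum_i p_i r_i F_i(r_i\mu(\mathbf{c}))$, which is exactly \eqref{TR1}; continuity of $F_i$ makes the boundary $\{u_i=r_i\mu(\mathbf{c})\}$ negligible so no fractional acceptance is needed.

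The only real subtlety, and the step I expect to require the most care, is the aggregation itself: one must confirm that replacing the $m$ separate capacity constraints by their sum genuinely relaxes the program (so that the inequality points from $\mathbb{E}[V^{\text{off}}]$ up to the one-constraint optimum), and that the value-to-size ordering \emph{across} the different types $i$, each carrying its own reward $r_i$ and distribution $F_i$, is governed by a single cutoff $1/\theta$. It is this common cutoff, scaled by each $r_i$, that forces the single threshold $\mu(\mathbf{c})$ together with the type-dependent acceptance region $\{u_i\le r_i\mu(\mathbf{c})\}$. Everything else reduces to the standard fractional-knapsack optimality argument.
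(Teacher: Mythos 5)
Your proposal is correct, and its first half coincides exactly with the paper's: both arguments start from Lemma~\ref{newlemma10}, aggregate the $m$ capacity constraints of $\text{LP}^{\text{UB}}$ into the single pooled budget $\sum_{j=1}^m c_j$ via $X_i(u_i)=\sum_j x_{ij}(u_i)$ (this is precisely the relaxation \eqref{PR12}), and then argue that the resulting single-constraint continuous knapsack is solved by a threshold rule whose binding-budget condition is \eqref{TR2}. Where you diverge is in how that knapsack is solved. The paper first passes to the finite-dimensional convex program \eqref{PR15} by setting $\hat z_i=\mathbb{E}_i[y_i^*(u_i)]$ and invoking a rearrangement inequality, $\mathbb{E}_i[u_i\,y_i^*(u_i)]\ge\int_0^{F_i^{-1}(\hat z_i)}u_i\,dF_i(u_i)$, to lower-bound the consumed capacity; it then applies strong duality and the KKT conditions in the variables $z_i$ to recover $z_i^*=F_i(r_i\mu(\mathbf{c}))$. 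You instead stay with the infinite-dimensional program and maximize the Lagrangian $T\sum_i p_i\,\mathbb{E}_i[(r_i-\theta u_i)X_i(u_i)]+\theta\sum_j c_j$ pointwise, which immediately yields the acceptance region $\{u_i\le r_i/\theta\}$; since the theorem only asserts an upper bound, weak duality with the budget-binding choice of $\theta$ (the last two terms of the dual value cancel exactly when \eqref{TR2} holds) already gives \eqref{TR1}, with no need for strong duality, the quantile reduction, or even exhibiting a feasible primal optimizer. Your route is therefore somewhat leaner; the paper's route has the side benefit of producing the explicit optimal solution $z_i^*$ of the relaxation, which is what motivates the form of the policy $\text{ATP}_2$. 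One pedantic point you have implicitly handled but should state: when no finite $\theta$ makes the budget bind, you must take $\theta=0$, accept everything, and check the bound reads $T\sum_i p_i r_i=T\sum_i p_i r_i F_i(+\infty)$, consistent with the convention $\mu(\mathbf{c})=+\infty$.
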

\begin{proof}{Proof:}
First, we have that
\begin{equation}\label{PR12}
\begin{aligned}
\text{LP}^{\text{UB}}\leq &\max~~T\cdot\sum_{i=1}^{n}p_i\cdot \mathbb{E}_i[ r_i\cdot y_i(u_i)]\\
&~\mbox{s.t.}~~~T\cdot\sum_{i=1}^{n}p_i\cdot\mathbb{E}_i[d\cdot y_i(u_i)]\leq \sum_{j=1}^{m}c_j\\
&~~~~~~~~0\leq y_i(u_i)\leq1~~~ \forall i, \forall u_i\\
\end{aligned}
\end{equation}
The inequality holds by noting that the solution $\{\hat{y}_i(u_i)\}$ defined by $\hat{y}_i(u_i)=\sum_{j=1}^{m}x^*_{j}(r_i,u_i)$, where $\{x^*_{j}(r_i,u_i)\}$ is the optimal solution of \eqref{newna1}, is feasible to the RHS of \eqref{PR12}, and we have
\[
\text{LP}^{\text{UB}}=T\cdot\sum_{i=1}^{n}p_i\cdot \mathbb{E}_i[\sum_{j=1}^{m} r_i\cdot x^*_j(r_i,u_i)]=T\cdot\sum_{i=1}^{n}p_i\cdot \mathbb{E}_i[ r_i\cdot \hat{y}_i(u_i)]
\]
Suppose the optimal solution of the RHS of \eqref{PR12} is denoted as $\{y^*_i(u_i)\}$, then define $\hat{z}_i=\mathbb{E}_i[y^*_i(u_i)]\in[0,1]$ for every $i$ and we have the following constraint regarding $\hat{z}_i$:
\begin{equation}\label{PR14}
\begin{aligned}
\sum_{j=1}^{m}c_i&\geq T\cdot\sum_{i=1}^{n}p_i\cdot \mathbb{E}_i[u_i\cdot y_i^*(u_i)]\geq T\cdot\sum_{i=1}^{n}p_i\cdot\left[\int_{0}^{F_i^{-1}(\hat{z}_i)}u_idF_i(u_i)\right]=T\cdot\sum_{i=1}^{n}p_i\cdot\left[\int_{0}^{\hat{z}_i}F_i^{-1}(v_i)dv_i\right]
\end{aligned}
\end{equation}
%The second inequality holds by noting that $I_i(t)\cdot y_i^*(t)$ can only take binary value, then the conditional expectation $E[d_t\cdot I_i(t)\cdot y_i^*(t)|I_i(t)=1]$ must be lower bounded by integrating $d_t$ from the smallest realization $0$ to a realization $a$ such that the probability of $\{d_t\in[0,a]\}$ equals to $P(I_i(t)\cdot y_i^*(t)=1|I_i(t)=1)=z_i(t)$.
where the second inequality holds by noting that $y_i^*(u_i)\in[0,1]$ thus $\mathbb{E}_i[u_i\cdot y_i^*(u_i)]$ must be lower bounded by integrating $u_i$ below the $\mathbb{E}_i[y^*_i(u_i)]$ quantile with distribution $F_i(\cdot)$, and the last equality holds by replacing variable $u_i$ using $F_i^{-1}(v_i)$. Thus, we have
\begin{equation}\label{PR15}
\begin{aligned}
\text{LP}^{\text{UB}}\leq&\max~~ T\cdot\sum_{i=1}^{n}p_i\cdot r_i\cdot z_i\\
&~\mbox{s.t.}~~~T\cdot\sum_{i=1}^{n}p_i\cdot\int_{0}^{z_i}F_i^{-1}(v_i)dv_i\leq\sum_{j=1}^{m}c_j\\
&~~~~~~~~0\leq z_i\leq 1~~~\forall i
\end{aligned}
\end{equation}
The RHS of \eqref{PR15} is a convex optimization problem over $\{z_i\}$ by noting that for each $i$, if we define $g_i(z)=\int_{0}^{z}F^{-1}_i(v)dv$, then $g_i'(z)=F_i^{-1}(z)$ is a non-decreasing function over $z$, which implies that $g_i(z)$ is a convex function over $z$. Then strong duality implies that
\begin{equation}\label{PR16}
\text{LP}^{\text{UB}}\leq\min_{\mu\geq0}\max_{0\leq z_i\leq 1}L(\mathbf{z},\mu)
\end{equation}
where $L(\mathbf{z},\mu)$ is the lagrangian function defined as follows:
\begin{equation}\label{PR17}
L(\mathbf{z},\mu)=T\cdot\sum_{i=1}^{n}r_i\cdot p_i\cdot z_i+\mu\cdot[\sum_{j=1}^{m}c_i-T\cdot\sum_{i=1}^{n}p_i\cdot\int_{0}^{z_i}F^{-1}_i(v_i)dv_i]
\end{equation}
Denote $\{z^*_i,\mu^*\}$ as the optimal solution of the dual problem, then from the KKT condition, we have that
\begin{equation}\label{PR18}
\frac{\partial L(\mathbf{z}^*,\mu^*)}{\partial z_i}=0\Rightarrow r_i\cdot p_i=\mu^*\cdot p_i\cdot F^{-1}_i(z_i^*)\Rightarrow z_i^*=F_i(\frac{r_i}{\mu^*})
\end{equation}
and
\begin{equation}\label{PR19}
\mu^*\cdot[\sum_{j=1}^{m}c_i-T\cdot\sum_{i=1}^{n}p_i\cdot\int_{0}^{z^*_i}F^{-1}_i(v_i)dv_i]=0
\end{equation}
Then, $\mu^*>0$ implies that
\[
\sum_{j=1}^{m}c_i-T\cdot\sum_{i=1}^{n}p_i\cdot\int_{0}^{z^*_i}F^{-1}_i(v_i)dv_i=0\Rightarrow\sum_{j=1}^{m}c_i-T\cdot\sum_{i=1}^{n}p_i\cdot\int_{0}^{F_i^{-1}(\frac{r_i}{\mu^*})}F^{-1}_i(v_i)dv_i=0
\]
Thus, the optimal dual variable $\mu^*$ is obtained by solving the following equation:
\[
T\cdot\sum_{i=1}^{n}p_i\cdot\int_{0}^{\frac{r_i}{\mu^*}}w_idF_i(w_i)=T\cdot\sum_{i=1}^{n}p_i\cdot\int_{0}^{F_i^{-1}(\frac{r_i}{\mu^*})}F^{-1}_i(v_i)dv_i=\sum_{j=1}^{m}c_i
\]
and $\mu^*=0$ if no $\mu^*$ makes the above equation hold. Denoting $\mu(\mathbf{c})=1/\mu^*$, we finally have $z^*_i=r_i\cdot\mu(\mathbf{c})$ for each $i$ and
\[
\text{LP}^{\text{UB}}\leq\min_{\mu\geq0}\max_{0\leq z_i\leq 1}L(\mathbf{z},\mu)=T\cdot\sum_{i=1}^{n}p_i\cdot r_i\cdot F_i(r_i\cdot\mu(\mathbf{c}))
\]
\Halmos
\end{proof}
When the request arrives and reveals itself as $(r_i, u_i)$, based on the threshold structure of the prophet upper bound \eqref{TR1}, we will make the accept/reject decision by comparing size $u_i$ to the threshold $r_i\cdot \mu$, where we will compute the threshold $\mu$ adaptively by \eqref{TR2}. If the request is accepted, since the reward is resource-independent, it is natural for our policy to assign the resource with the largest capacity to serve it. Our policy is formally presented in Algorithm \ref{ATP3}.
\begin{algorithm}
\begin{algorithmic}[1]
\caption{Adaptive Threshold Policy ($\text{ATP}_2$)}
\label{ATP3}
\STATE At time period $t=1,2,\dots,T$
\STATE Denote the remaining capacity as $\mathbf{c}^t$, and compute the threshold $\mu_t(\mathbf{c}^t)$ by solving the equation
\begin{equation}\label{TR3}
\sum_{i=1}^{n}p_i\cdot\int_{0}^{r_i\cdot\mu_t(\mathbf{c}^t)}w_idF_i(w_i)=\frac{\sum_{j=1}^{m}c^t_j}{T-t+1}
\end{equation}
If there is no threshold $\mu_t(\mathbf{c}^t)$ making the equation holds, we set $\mu_t(\mathbf{c}^t)=+\infty$.
\STATE Denote $j_t$ as the resource with the largest remaining capacity, i.e., $c^t_{j_t}=\max\{ c^t_1,c^t_2,\dots,c^t_m \}$.
\STATE For each $i$, define the truncated threshold $h^i_t(\mathbf{c}^t)=\min\{ c^t_{j_t},r_i\cdot\mu_t(\mathbf{c}^t) \}$.
\STATE After request $t$ arrives and reveals itself as $(r_i,u_i)$, do the following:
\begin{enumerate}[label=(\roman*)]
  \item Assign resource $j_t$ to serve request $t$ if $u_i\leq h^i_t(\mathbf{c}^t)$.
  \item Reject request $t$ if $u_i>h^i_t(\mathbf{c}^t)$.
\end{enumerate}
\end{algorithmic}
\end{algorithm}
%Similar to Lemma \ref{Main1}, defining $\hat{r}_{\min}=\min\{ \hat{r}_1,\hat{r}_2,\dots,\hat{r}_n \}$, we let
%and then focus only on the time period $t$ such that $t<T-\hat{K}$. By doing so, we have that for any possible remaining capacity $\mathbf{x}$, we have $r_i\cdot\mu_t(\mathbf{x})\leq \bar{\omega}$ for each $i$. Thus, there always exists a solution for \eqref{TR3} and we can fully use the two conditions in Assumption \ref{assumption1}. The cost of ignoring the last $\hat{K}$ time periods is at most a constant regret, which must be upper bounded by $\hat{r}_{\max}\cdot\hat{K}$, where $\hat{r}_{\max}=\max\{ \hat{r}_1,\hat{r}_2,\dots,\hat{r}_m \}$. Thus, how the regret scaling over $T$ will not be influenced. We will state our regret bound in the next theorem.
Following a similar way of proving Lemma \ref{Main1}, we could obtain the following regret bound of Algorithm \ref{ATP3}.
\begin{theorem}\label{TRMain2}
Suppose Assumption \ref{assumption1} holds and $\hat{K}$ is the constant defined as follows:
\begin{equation}\label{PR20}
\hat{K}=\left\lceil \frac{\sum_{j=1}^{m}c_j}{\sum_{i=1}^{n}p_i\cdot\int_{0}^{r_i\cdot\frac{\bar{\omega}}{r_{\max}}}w_idF_i(w_i)} \right\rceil
\end{equation}
where $r_{\max}=\max\{r_1,r_2,\dots,r_n\}$. Then we have the following regret bound
\begin{equation}\label{PR21}
\begin{aligned}
\text{Regret}(\text{ATP}_2)\leq(\log T+1)\cdot(2mM-m)\cdot r_{\max}+r_{\max}\cdot\hat{K}=O(m\log T)
\end{aligned}
\end{equation}
\end{theorem}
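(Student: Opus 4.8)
The plan is to follow the proof of Lemma \ref{Main1}, the essential new ingredient being that here we analyze all $m$ resources jointly rather than decomposing into $n$ sub-problems. The structural fact that makes a joint analysis tractable is that the threshold $\mu_t(\mathbf{c}^t)$ defined by \eqref{TR3} depends on the remaining capacities only through their sum $S^t:=\sum_{j=1}^m c^t_j$; consequently both the prophet upper bound $G(\mathbf{c}^t,t):=(T-t+1)\sum_{i=1}^n p_i r_i F_i(r_i\mu_t(\mathbf{c}^t))$ and the acceptance rule depend on $\mathbf{c}^t$ only through $S^t$. I would define the to-go value $V_t(\mathbf{c}^t)$ of ATP$_2$ and set $\rho_t(\mathbf{c}^t)=G(\mathbf{c}^t,t)-V_t(\mathbf{c}^t)$; by Theorem \ref{TRmain1} the regret is at most $\rho_1(\mathbf{c})$ on the initial capacity $\mathbf{c}$, so it suffices to bound $\rho_1$.

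First I would write the one-step Bellman recursion for $V_t$. Conditioning on the arriving type $i$ (probability $p_i$) and its size $u\sim F_i$, the request is served by the largest-capacity resource $j_t$ precisely when $u\le h^i_t(\mathbf{c}^t)=\min\{c^t_{j_t},r_i\mu_t(\mathbf{c}^t)\}$, consuming $u$ from resource $j_t$; otherwise the state is unchanged. This yields
\[
V_t(\mathbf{c}^t)=\sum_{i=1}^n p_i\Big[\int_0^{h^i_t(\mathbf{c}^t)}\big(r_i+V_{t+1}(\mathbf{c}^t-u\mathbf{e}_{j_t})\big)\,dF_i(u)+\big(1-F_i(h^i_t(\mathbf{c}^t))\big)V_{t+1}(\mathbf{c}^t)\Big].
\]
Expanding $G(\mathbf{c}^t,t)$ through its closed form together with the defining identity \eqref{TR3} and subtracting, I would obtain a recursion of the form $\rho_t(\mathbf{c}^t)\le \mathbb{E}[\rho_{t+1}(\cdot)]+\varepsilon_t$, where $\varepsilon_t$ collects the per-period error coming from the change in acceptance probabilities $F_i(r_i\mu_t(\mathbf{c}^t))-F_i(r_i\mu_t(\mathbf{c}^t-u\mathbf{e}_{j_t}))$ after one unit of consumption.

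Because $\mu_t$ is a function of $S^t$ alone, bounding this change reduces to a one-dimensional sensitivity in the aggregate capacity, which I would control exactly as in Lemma \ref{newlemma2}: Lemma \ref{assumplemma1} supplies the constant $M$, and an analog of the convexity of $1/\mu_t$ from Lemma \ref{newlemma1} (re-derived for the joint threshold, whose defining equation has the same monotone structure) produces a second-order estimate. The first-order part of this estimate cancels against the reward $r_i$ gained in the recursion, leaving only a residual of order $1/(T-t+1)$; the factor $m$ in the coefficient $(2mM-m)=m(2M-1)$ enters because the denominator of the second-order term is the capacity $c^t_{j_t}$ of the served resource, and the greedy rule guarantees $c^t_{j_t}=\max_j c^t_j\ge S^t/m$, so $1/c^t_{j_t}\le m/S^t$ inflates the single-resource coefficient $2M-1$ by $m$. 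Telescoping the residuals over $t$ then gives the $(\log T+1)(2mM-m)r_{\max}$ contribution.

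The main obstacle is controlling the truncation $h^i_t=\min\{c^t_{j_t},r_i\mu_t\}$, which is absent in the linearly-scaled regime of \cite{jasin2012re} because there capacity never binds except in the last few periods. When $c^t_{j_t}<r_i\mu_t(\mathbf{c}^t)$ the policy is forced to reject some requests the upper bound would accept, and these truncation events must be shown to cost only a bounded amount. The key is again the balancing inequality $c^t_{j_t}\ge S^t/m$: truncation can bind only once the aggregate capacity $S^t$ has shrunk to the order of the threshold, and I would bound the number of periods in this boundary regime by $\hat K$ in \eqref{PR20}, namely the total capacity divided by the minimal expected per-request consumption, each such period contributing at most $r_{\max}$ to $\rho_1$. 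Combining the telescoped second-order bound with this boundary term yields $\rho_1(\mathbf{c})=O(m\log T)$, which is the claimed regret.
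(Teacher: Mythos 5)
Your overall architecture matches the paper's proof of Theorem \ref{TRMain2}: the same gap function $\rho_t(\mathbf{c}^t)=(T-t+1)\sum_{i}p_ir_iF_i(r_i\mu_t(\mathbf{c}^t))-V_t(\mathbf{c}^t)$, the same Bellman recursion, the cancellation of the first-order sensitivity against the drift of the threshold in $t$, the convexity of $1/\mu_t$ in the consumed coordinate (the paper proves this as Lemma \ref{aplemma2}) to obtain the second-order estimate, and the balancing inequality $c^t_{j_t}\geq\frac{1}{m}\sum_{j}c^t_j$ as the source of the factor $m$. The observation that $\mu_t$ depends on $\mathbf{c}^t$ only through $\sum_j c^t_j$ is correct and is implicitly what makes the paper's sensitivity computation one-dimensional.

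There is, however, a genuine gap in how you account for the two error sources. First, you assign the truncation cost (periods where $h^i_t(\mathbf{c}^t)=c^t_{j_t}<r_i\mu_t(\mathbf{c}^t)$) to the $r_{\max}\cdot\hat{K}$ term, claiming truncation binds in at most $\hat{K}$ periods. That claim is unsupported: whether truncation binds at time $t$ depends on the random trajectory of $\mathbf{c}^t$, and nothing confines these events to $\hat{K}$ periods. In the paper this cost is not a boundary effect at all; it appears at \emph{every} period as the term $\sum_i p_ir_i\left(F_i(r_i\mu_t(\mathbf{c}^t))-F_i(h^i_t(\mathbf{c}^t))\right)$, which is bounded by $\frac{Mmr_{\max}}{T-t+1}$ using Lemma \ref{assumplemma1} together with $\sum_j c^t_j\leq m\,c^t_{j_t}$, and it supplies the $mM$ half of the coefficient $2mM-m$ (the second-order term supplies only $(M-1)m$, not the full $(2M-1)m$ you attribute to it). So under your accounting the per-period residual is too small by $Mm/(T-t+1)$ and the displaced truncation cost is left uncontrolled. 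Second, you never address the actual reason $\hat{K}$ is needed: Assumption \ref{assumption1}, and hence the constant $M$ of Lemma \ref{assumplemma1}, are valid only on $(0,\bar{\omega})$, so the per-period analysis is legitimate only when $r_i\mu_t(\mathbf{c}^t)<\bar{\omega}$ for all $i$. The definition \eqref{PR20} is engineered exactly so that this holds for all $t\leq T-\hat{K}$, since then $\sum_j c^t_j/(T-t+1)$ falls below $\sum_i p_i\int_0^{r_i\bar{\omega}/r_{\max}}w\,dF_i(w)$ and monotonicity of \eqref{TR3} forces $\mu_t(\mathbf{c}^t)<\bar{\omega}/r_{\max}$; the last $\hat{K}$ periods are then charged $r_{\max}$ each. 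Without this restriction your invocations of Lemma \ref{assumplemma1} are not justified.
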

It is also worth-noting that when there is only one resource, then the \textit{typical class} conditions from \cite{arlotto2018logarithmic} is enough to establish the above regret bound. The reason is that in the proof, when there is only one resource, we only need to bound the term $\frac{wF(w)}{\int_{0}^{w}udF(u)}$, which could be upper bounded by a constant $M$ under the \textit{typical class} conditions due to Lemma $4$ in \cite{arlotto2018logarithmic}. Also, as shown in the next theorem, the $\Omega(T^{\frac{1}{1+\alpha}})$ lower bound of $\mathbb{E}[V^{\text{off}}(T)]$ continues to hold. Thus, given the $O(\log T)$ regret bound, Algorithm \ref{ATP3} is asymptotically optimal as $T\rightarrow\infty$.
\begin{theorem}\label{Main4}
Under Assumption \ref{newassump1}, Assumption \ref{assumption1} and Assumption \ref{newassump3}, we have that
\begin{equation}\label{maineq5}
\mathbb{E}[V^{\text{off}}(\mathbf{I})]=\Omega(T^{\frac{1}{1+\alpha}})
\end{equation}
where $\alpha$ is a positive constant satisfying $\frac{1}{\gamma_2}=\lambda^\alpha$. Moreover, the constant terms in $\Omega(\cdot)$ is independent of $n$.
\end{theorem}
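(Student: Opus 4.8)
The plan is to lower bound the prophet $\mathbb{E}[V^{\text{off}}(\mathbf{I})]$ directly by exhibiting \emph{one} explicit feasible solution of the offline problem \eqref{df3} and estimating its expected objective; since \eqref{df3} is a maximization, any feasible solution gives a valid lower bound. The construction exploits Assumption \ref{newassump3} twice. Because $\mathbf{b}_i$ is an all-one vector, the size a request consumes equals $u_i$ regardless of which resource serves it, so ``small request'' is a notion independent of the assignment and a greedy packing into the $m$ bins is well defined. Because $r_{ij}=r_i$, every accepted request yields reward at least $r_{\min}:=\min_i r_i>0$, so it suffices to lower bound the \emph{number} of requests a feasible offline solution can serve and multiply by $r_{\min}$. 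The device that removes any dependence on $n$ is to pool all reward types: rather than treating the $n$ types separately, as in the reduction behind Theorem \ref{Main3} (whose regret carries a factor $n$), I will work with the single unconditional size distribution $\bar F(w):=\sum_{i=1}^n p_i F_i(w)$ and use all $T$ arrivals at once. Only condition~1 of Assumption \ref{assumption1} is needed.

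The first step is a polynomial lower bound on $\bar F$ near zero. Summing condition~1 over $i$ with weights $p_i$ shows that $\bar F$ satisfies $\gamma_1\bar F(\lambda w)\le \bar F(w)\le \gamma_2 \bar F(\lambda w)$ on $(0,\bar{\omega})$ with the \emph{same} constants $\lambda,\gamma_1,\gamma_2$. Iterating the upper inequality from $w=\bar{\omega}$ gives $\bar F(\lambda^k\bar{\omega})\ge \gamma_2^{-k}\bar F(\bar{\omega})$; writing $\gamma_2^{-1}=\lambda^{\alpha}$ (the defining relation for $\alpha$) and choosing, for each $w\in(0,\bar{\omega})$, the smallest $k$ with $\lambda^k\bar{\omega}\le w$ yields
\[
\bar F(w)\ \ge\ \frac{\lambda^{\alpha}\,\bar F(\bar{\omega})}{\bar{\omega}^{\alpha}}\,w^{\alpha}\ =:\ C_0\,w^{\alpha},\qquad w\in(0,\bar{\omega}).
\]
Here $\alpha>0$ since $0<\lambda<1<\gamma_2$, and $C_0$ depends only on the shared parameters $\lambda,\gamma_2,\bar{\omega}$ and the aggregate mass $\bar F(\bar{\omega})$; no summation over the $n$ types appears, which is exactly what makes the final constant $n$-free.

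Next I construct the feasible offline solution. Fix a threshold $w^\ast\in(0,\bar{\omega})$ and let the solution greedily assign every arriving request of size at most $w^\ast$ to any bin with at least $w^\ast$ remaining capacity, rejecting all other requests; this is an integer feasible point of \eqref{df3}. Let $N_{w^\ast}\sim\mathrm{Bin}(T,\bar F(w^\ast))$ be the number of arrivals of size $\le w^\ast$. Since every packed item has size $\le w^\ast$, a bin can be abandoned only with less than $w^\ast$ free space, so the total wasted capacity is below $m w^\ast$ and the number served is at least $\min\{N_{w^\ast},\,C/w^\ast-m\}$ with $C:=\sum_{j=1}^m c_j$; hence pathwise $V^{\text{off}}(\mathbf{I})\ge r_{\min}\cdot\min\{N_{w^\ast},\,C/w^\ast-m\}$. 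I then pick $w^\ast$ to balance the two terms, as the unique solution of $T\,\bar F(w^\ast)=2C/w^\ast$ (left side increasing, right side decreasing in $w$). Substituting $\bar F(w^\ast)=2C/(Tw^\ast)$ into $\bar F(w^\ast)\ge C_0 (w^\ast)^{\alpha}$ gives $(w^\ast)^{1+\alpha}\le 2C/(TC_0)$, so that $C/w^\ast\ge C^{\alpha/(1+\alpha)}(C_0/2)^{1/(1+\alpha)}\,T^{1/(1+\alpha)}$, of the desired order with an $n$-free constant.

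Finally, because $\mathbb{E}[N_{w^\ast}]=2C/w^\ast$ is twice the capacity-limited count $C/w^\ast-m$, concentration closes the argument: $\mathrm{Var}(N_{w^\ast})\le \mathbb{E}[N_{w^\ast}]=2C/w^\ast$, so Chebyshev gives $\mathbb{P}(N_{w^\ast}<C/w^\ast-m)=O(w^\ast/C)=o(1)$, whence $\mathbb{E}[\min\{N_{w^\ast},C/w^\ast-m\}]\ge (C/w^\ast-m)(1-o(1))$. Combining the displays yields $\mathbb{E}[V^{\text{off}}(\mathbf{I})]\ge r_{\min}(C/w^\ast)(1-o(1))=\Omega(T^{1/(1+\alpha)})$ with a constant built only from $r_{\min},C,C_0,\alpha$ and no factor of $n$. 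I expect the main obstacle to be precisely this coupling: the count $N_{w^\ast}$ and whether all small items fit are positively correlated, so the pathwise bound contains a $\min$ whose expectation cannot be read off term by term, and it is the balancing choice of $w^\ast$ together with the variance estimate that decouples them. A secondary point needing care is checking that every constant in $C_0$ and the balance equation is governed by the shared $\lambda,\gamma_2,\bar{\omega}$ rather than by the number of types, so that the promised $n$-independence of the $\Omega(\cdot)$ constant genuinely holds.
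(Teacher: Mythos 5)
Your route is genuinely different from the paper's and most of it is sound. The paper lower-bounds $\mathbb{E}[V^{\text{off}}(\mathbf{I})]$ through the performance guarantee of the online policy $\text{ATP}_2$ (Theorem \ref{TRMain2}), reducing the task to showing $T\cdot\sum_i p_i r_i F_i(r_i\mu(\mathbf{c}))=\Omega(T^{1/(1+\alpha)})$ for the threshold $\mu(\mathbf{c})$ of \eqref{TR2}; the key estimate there is the same dyadic iteration of condition 1 of Assumption \ref{assumption1} that you apply to the pooled distribution $\bar F$, yielding $\lambda^l\le (Q/T)^{1/(1+\alpha)}$. Your alternative — exhibit one explicit feasible point of \eqref{df3} (greedy packing of requests of size at most $w^\ast$), balance $T\bar F(w^\ast)=2C/w^\ast$, and control the fluctuation of $N_{w^\ast}$ by Chebyshev — is more elementary: it needs no LP upper bound, no regret theorem, and only condition 1 of Assumption \ref{assumption1}. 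The packing count $\min\{N_{w^\ast},C/w^\ast-m\}$, the bound $\bar F(w)\ge C_0 w^\alpha$, and the variance argument all check out (you should add the routine observations that $w^\ast\in(0,\bar{\omega})$ and $C/w^\ast-m>0$ for $T$ large, both consequences of $(w^\ast)^{1+\alpha}\le 2C/(TC_0)$).

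There is, however, one genuine gap: the factor $r_{\min}=\min_i r_i$. The model only assumes nonnegative rewards, so $r_{\min}$ may equal $0$, in which case your pathwise bound $V^{\text{off}}(\mathbf{I})\ge r_{\min}\cdot\min\{N_{w^\ast},C/w^\ast-m\}$ is vacuous while the theorem's conclusion is still true (the paper's constant involves only $r_{\max}$, $\hat K$ and distributional parameters, never $r_{\min}$). Even when all $r_i>0$, discarding the reward values in favor of $r_{\min}$ undercuts exactly the clause you are asked to prove — that the $\Omega(\cdot)$ constant is independent of $n$ — because that clause exists to let the bound survive the passage to an infinite reward support, where $\inf_i r_i$ can be $0$ or arbitrarily small even though every aggregate quantity in your $C_0$ stays bounded away from zero. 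The paper avoids this by carrying the reward-weighted sum $\sum_i p_i r_i F_i(r_i\mu_T)$ through the whole argument. To repair your proof you must not pack all small requests indiscriminately: restrict the greedy to types whose reward exceeds a positive quantile $r^\dagger$ chosen from the reward-weighted mass $\sum_i p_i r_i F_i(\bar{\omega})$ (the restricted mixture still satisfies condition 1 with the same $\lambda,\gamma_1,\gamma_2$), and verify that $r^\dagger$ times the resulting count is bounded below by an aggregate, $n$-free quantity. As written, the proposal does not establish the theorem for reward distributions with mass at or near zero.
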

Note that both the regret bound in Theorem \ref{TRMain2} and the lower bound of the expected reward obtained by the prophet in Theorem \ref{Main4} depend only on $\hat{K}$ instead of $n$. Thus, Algorithm \ref{ATP3} is asymptotically optimal as $T\rightarrow\infty$ even when the reward distribution has an infinite support.

\section{Concluding Remarks}
In this paper, we consider a general model of the online resource allocation problem and we derive near-optimal policies for our problem under two conditions separately: the reward could only take $n$ values and given each reward realization, the size distribution equals a deterministic weight multiplied by a single-dimensional random variable, or both the reward and the size are independent of which resource assigned. Under each condition, we propose an adaptive threshold policy and prove a $O(\log T)$ regret bound. Our approaches for both conditions are based on showing that the optimal solution to the corresponding prophet upper bound possesses a threshold structure and we could mimick the prophet by following an adaptive threshold policy. Specifically, under the first condition, our policy first divide each resource into $n$ parts and only use one specific part of each resource to serve the requests with the corresponding reward. As a result, the regret bound depends on $n$. We then explore if there exists a policy without dividing the resources and if the regret bound will not depend on the cardinality of the support set of the reward. We show that under the second condition, we could propose another adaptive threshold policy which utilizes each resource as a whole and we show that the corresponding regret bound will be independent of $n$. Our analysis could be directly applied to the case where the cardinality of the support set of the reward is infinitely large and our regret bound continue to hold. Also, we show that under both conditions, the expected reward collected by the prophet is at least $\Omega(T^{\frac{1}{1+\alpha}})$, where $\alpha$ is a positive constant depending on the distribution functions, thus, together with the $O(\log T)$ regret bound, we establish the asymptotic optimality of our policies.

Our future efforts will be devoted to deriving near optimal policies and proving regret bounds for more general settings. For example, the setting where the size distribution could not be expressed explicitly by a single-dimensional random variable, or the setting where the reward distribution has an infinite support and the reward and the size could be resource-dependent.

%%
%\theendnotes

% Acknowledgments here \Ackoboledgement{}

\ \

\bibliographystyle{ormsv080} % outcomment this and next line in Case 1
\bibliography{myreferences} % if more than one, comma separated

\begin{APPENDIX}{Proofs of Lemmas, Propositions and Theorems}

\section{Proof of Lemma \ref{lemma1101}}
We will prove the lemma by showing that both relationships "$\leq$", "$\geq$" hold between $\text{LP}^{\text{UB}}$ and the right hand side of \eqref{up21}. We first prove "$\leq$". Denote $\{x^*_j(\mathbf{r}_i,\mathbf{b}_i\cdot u_i)\}$ as the optimal solution of \eqref{newna1} and define $\hat{c}_{ij}=T\cdot p_i\cdot b_{ij}\cdot \mathbb{E}_i[u_i\cdot x_{j}^*(\mathbf{r}_i,\mathbf{b}_i\cdot u_i)]$. Obviously, the capacity constraints of \eqref{newna1} imply that $\{\hat{c}_{ij}\}$ forms a feasible solution of the RHS of \eqref{up21}. Then we focus on $G_i(\mathbf{\hat{c}}_i,1)$ and denote $\hat{x}_{ij}(u_i)=x^*_j(\mathbf{r}_i,\mathbf{b}_i\cdot u_i)$. It is obvious that $\{\hat{x}_{ij}(u_i)\}$ forms a feasible solution of \eqref{up22}, thus we have that
\[
G_i(\mathbf{\hat{c}}_i,1)\geq T\cdot p_i\cdot \mathbb{E}_i[\sum_{j=1}^{m}r_{ij}\cdot \hat{x}_{ij}(u_i)]=T\cdot p_i\cdot \mathbb{E}_i[\sum_{j=1}^{m}r_{ij}\cdot x^*_{j}(\mathbf{r}_i, \mathbf{b}_i\cdot u_i)]
\]
which implies that
\[
\text{LP}^{\text{UB}}=\sum_{i=1}^{n}T\cdot p_i\cdot \mathbb{E}_i[\sum_{j=1}^{m}r_{ij}\cdot x^*_{j}(\mathbf{r}_i, \mathbf{b}_i\cdot u_i)]\leq \sum_{i=1}^{n}G_i(\mathbf{\hat{c}}_i,1)\leq \text{the~RHS~of~}\eqref{up21}
\]
Next, we prove "$\geq$". Denote $\{c^*_{ij}\}$ as the optimal solution of the RHS of \eqref{up21} and denote $\{x^*_{ij}(u_i)\}$ as the optimal solution of \eqref{up22} for $G_i(\mathbf{c}^*_i,1)$. Then we define $\hat{x}_j(\mathbf{r}_i,\mathbf{b}_i\cdot u_i)=x^*_{ij}(u_i)$ and we have that
\[
0\leq \hat{x}_j(\mathbf{r}_i,\mathbf{b}_i\cdot u_i)\leq 1\text{~~~and~~~}\sum_{j=1}^{m}\hat{x}_j(\mathbf{r}_i,\mathbf{b}_i\cdot u_i)=\sum_{j=1}^{m}x^*_{ij}(u_i)\leq1
\]
Also, we have that
\[
\sum_{i=1}^{n}T\cdot p_i\cdot b_{ij}\cdot\mathbb{E}_i[u_i\cdot \hat{x}_j(\mathbf{r}_i,\mathbf{b}_i\cdot u_i)]=\sum_{i=1}^{n}T\cdot p_i\cdot b_{ij}\cdot\mathbb{E}_i[u_i\cdot x^*_{ij}(u_i)]=\sum_{i=1}^{n}c^*_{ij}\leq c_j
\]
Thus, $\{\hat{x}_j(\mathbf{r}_i,\mathbf{b}_i\cdot u_i)\}$ forms a feasible solution of \eqref{newna1} and we have
\[
\text{LP}^{\text{UB}}\geq \sum_{i=1}^{n}T\cdot p_i\cdot \mathbb{E}_i[\sum_{j=1}^{m}r_{ij}\cdot \hat{x}_{j}(\mathbf{r}_i, \mathbf{b}_i\cdot u_i)]=\sum_{i=1}^{n}T\cdot p_i\cdot \mathbb{E}_i[\sum_{j=1}^{m}r_{ij}\cdot x^*_{ij}(u_i)]=\sum_{i=1}^{n}G_i(\mathbf{c}^*_i,1)=\text{the~RHS~of~}\eqref{up21}
\]
which completes our proof.

\section{Proof of Lemma \ref{Upper1}}
We first show that $G_i(\mathbf{\tilde{c}}_i,t)\leq (T-t+1)\cdot p_i\cdot \sum_{j=1}^{m}(r_{ii_j}-r_{ii_{j+1}})\cdot F_i(\mu^{ii_j}_t(\mathbf{\tilde{c}}_i))$. Suppose $\{x^*_{ii_j}(u_i)\}$ is the optimal solution of \eqref{up22} and we define that $\hat{y}_{ii_j}(u_i)=\sum_{k=1}^{j}x^*_{ii_k}(u_i)$ for each $j$, then it follows that
\[
0\leq \hat{y}_{ii_j}(u_i)\leq 1\text{~~~and~~~}(T-t+1)\cdot p_i\cdot \mathbb{E}_i[u_i\cdot \hat{y}_{ii_j}(u_i)]\leq \sum_{k=1}^{j}\tilde{c}_{ii_k}/b_{ii_k}
\]
Then we define $\hat{z}_{i_j}=\mathbb{E}_i[\hat{y}_{ii_j}(u_i)]$ for each $j$. It is obvious that $0\leq \hat{z}_{i_j}\leq 1$ and we also have the following constraint regarding $\hat{z}_{i_j}$
\begin{equation}\label{up7}
(T-t+1)\cdot p_i\cdot\int_{0}^{F_i^{-1}(\hat{z}_{i_j})}u_idF_i(u_i)\leq (T-t+1)\cdot p_i\cdot \mathbb{E}_i[u_i\cdot \hat{y}_{ii_j}(u_i)]\leq \sum_{k=1}^{j}\tilde{c}_{ii_k}/b_{ii_k}
\end{equation}
The first inequality holds by noting that $\hat{y}_{ii_j}(u_i)\in[0,1]$, then the expectation $\mathbb{E}_i[u_i\cdot \hat{y}_{ii_j}(u_i)]$ must be lower bounded by integrating $u_i$ from the smallest realization $0$ to a realization $a$ such that $P(\{u_i\in[0,a]\})=\mathbb{E}_i[\hat{y}_{ii_j}(u_i)]=\hat{z}_{i_j}$. Also note that
\[\begin{aligned}
G_i(\mathbf{\tilde{c}}_i,t)&=(T-t+1)\cdot p_i\cdot \sum_{j=1}^{m}r_{ii_j}\cdot \mathbb{E}_i[x^*_{ii_j}(u_i)]=(T-t+1)\cdot p_i\cdot \sum_{j=1}^{m}(r_{ii_j}-r_{ii_{j+1}})\cdot\mathbb{E}_i[\hat{y}_{ii_j}(u_i)]\\
&=(T-t+1)\cdot p_i\cdot \sum_{j=1}^{m}(r_{ii_j}-r_{ii_{j+1}})\cdot\hat{z}_{i_j}
\end{aligned}\]
Thus, we have
\begin{equation}\label{up8}
\begin{aligned}
G_i(\mathbf{\tilde{c}}_i,t)\leq&\max~~ (T-t+1)\cdot p_i\cdot\sum_{j=1}^{m}(r_{ii_j}-r_{ii_{j+1}})\cdot z_{i_j}\\
&~\mbox{s.t.}~~~(T-t+1)\cdot p_i\cdot\int_{0}^{F_i^{-1}(z_{i_j})}wdF_i(w)\leq\sum_{k=1}^{j}\tilde{c}_{ii_k}/b_{ii_k}~~~\forall j\\
&~~~~~~~~0\leq z_{i_j}\leq 1~~~\forall j
\end{aligned}
\end{equation}
Note that each constraint in the RHS of \eqref{up8} only involves a single $z_{i_j}$ and the objective function is linear in $z_{i_j}$, thus, the optimization problem is separable over $\{z_{i_j}\}$ and for each $j$, the optimal solution $z_{i_j}^*$ is given by solving the following single-variable optimization problem:
\[
\begin{aligned}
&\max~~ (T-t+1)\cdot p_i\cdot(r_{ii_j}-r_{ii_{j+1}})\cdot z_{i_j}\\
&~\mbox{s.t.}~~~(T-t+1)\cdot p_i\cdot\int_{0}^{F_i^{-1}(z_{i_j})}wdF_i(w)\leq\sum_{k=1}^{j}\tilde{c}_{ii_k}/b_{ii_k}\\
&~~~~~~~~0\leq z_{i_j}\leq 1
\end{aligned}
\]
Since $r_{ii_j}-r_{ii_{j+1}}\geq0$, obviously the optimal solution $z_{i_j}^*$ satisfies
\[
(T-t+1)\cdot p_i\cdot\int_{0}^{F_i^{-1}(z^*_{i_j})}wdF_i(w)=\sum_{k=1}^{j}c_{ii_k}/b_{ii_k},\text{~~if~}(T-t+1)\cdot p_i\cdot\int_{0}^{\infty}wdF_i(w)\geq\sum_{k=1}^{j}\tilde{c}_{ii_k}/b_{ii_k}
\]
and
\[
z^*_{i_j}=1,\text{~~if~}(T-t+1)\cdot p_i\cdot\int_{0}^{\infty}wdF_i(w)<\sum_{k=1}^{j}\tilde{c}_{ii_k}/b_{ii_k}
\]
Thus, we have $\mu^{ii_j}_t(\mathbf{\tilde{c}}_i)=F_i^{-1}(z^*_{i_j})$ and it follows that $G_i(\mathbf{\tilde{c}}_i,t)\leq(T-t+1)\cdot p_i\cdot \sum_{j=1}^{m}(r_{ii_j}-r_{ii_{j+1}})\cdot F_i(\mu^{ii_j}_t(\mathbf{\tilde{c}}_i))$. Note that
\[
(T-t+1)\cdot p_i\cdot \mathbb{E}_i[\sum_{j=1}^{m}r_{ij}\cdot \hat{x}_{ij}(u)]=(T-t+1)\cdot p_i\cdot \sum_{j=1}^{m}(r_{ii_j}-r_{ii_{j+1}})\cdot F_i(\mu^{ii_j}_t(\mathbf{\tilde{c}}_i))
\]
It remains to show that $\{\hat{x}_{ii_j}(u_i)\}$ is feasible to \eqref{up22}. Obviously for each $u$, we have that $\sum_{j=1}^{m}\hat{x}_{ii_j}(u_i)\leq 1$ and for each $j$, we have that
\[
(T-t+1)\cdot p_i\cdot \mathbb{E}_i[u_i\cdot\hat{x}_{ii_j}(u_i)]=(T-t+1)\cdot p_i\cdot\int_{\mu^{ii_{j-1}}_t(\mathbf{\tilde{c}}_i)}^{\mu^{ii_j}_t(\mathbf{\tilde{c}}_i)}u_idF_i(u_i)\leq \tilde{c}_{ii_j}/b_{ii_j}
\]
which completes our proof.

\section{Proof of Lemma \ref{assumplemma1}}
We will first show that under the condition, there exists two constants $M_1, M_2>0$ such that
\begin{equation}\label{appendix1}
M_1\leq \frac{wF(w)}{\int_{0}^{w}udF(u)}\leq M_2~~~\text{for~all~}w\in(0,\bar{\omega})
\end{equation}
The second inequality has already been established in Lemma 4 in \cite{arlotto2018logarithmic} and the proof is replicated as follows for completeness:
\[\begin{aligned}
&0<1-\gamma_1\leq 1-\frac{F(\lambda w)}{F(w)}=\int_{\lambda w}^{w}\frac{dF(u)}{F(w)}~~\text{for~all~}w\in(0,\bar{\omega})\\
&\Rightarrow~~\lambda w(1-\gamma_1)\leq \int_{\lambda w}^{w}\frac{\lambda w}{F(w)}dF(u)\leq\int_{0}^{w}\frac{u}{F(w)}dF(u)\\
&\Rightarrow~~\frac{wF(w)}{\int_{0}^{w}udF(u)}\leq\frac{1}{\lambda(1-\gamma_1)}=M_2\text{~~for~all~}w\in(0,\bar{\omega})
\end{aligned}\]
Thus, we only focus on proving the first inequality. We have that for any $w\in(0,\bar{\omega})$ and any positive integer $k$,
\[
1-\frac{F(\lambda^kw)}{F(\lambda^{k-1}w)}\leq 1-\frac{1}{\gamma_2}\Rightarrow \int_{\lambda^kw}^{\lambda^{k-1}w}\frac{dF(u)}{F(\lambda^{k-1}w)}\leq 1-\frac{1}{\gamma_2}
\]
then we have
\[
\int_{\lambda^kw}^{\lambda^{k-1}w}\frac{udF(u)}{F(\lambda^{k-1}w)}\leq \int_{\lambda^kw}^{\lambda^{k-1}w}\frac{\lambda^{k-1}wdF(u)}{F(\lambda^{k-1}w)}\leq (1-\frac{1}{\gamma_2})\cdot\lambda^{k-1}w
\]
also, notice that $\frac{1}{F(\lambda^{k-1}w)}\geq \gamma_1^{k-1}\cdot\frac{1}{F(w)}$, we have
\[
\int_{\lambda^kw}^{\lambda^{k-1}w}\frac{udF(u)}{F(w)}\leq (1-\frac{1}{\gamma_2})\cdot(\frac{\lambda}{\gamma_1})^{k-1}w
\]
Thus, it holds that
\[
\int_{0}^{w}\frac{udF(u)}{F(w)}=\sum_{k=1}^{+\infty}\int_{\lambda^kw}^{\lambda^{k-1}w}\frac{udF(u)}{F(w)}\leq \sum_{k=1}^{+\infty}(1-\frac{1}{\gamma_2})\cdot(\frac{\lambda}{\gamma_1})^{k-1}w=\frac{w\gamma_1(\gamma_2-1)}{\gamma_1\gamma_2-\lambda\gamma_2}
\]
which implies
\[
\frac{wF(w)}{\int_{0}^{w}udF(u)}\geq\frac{\gamma_1\gamma_2-\lambda\gamma_2}{\gamma_1(\gamma_2-1)}
\]
and completes our proof of \eqref{appendix1}. Moreover, We could also find a positive integer $l$ such that $\frac{M_1}{\lambda^{l-1}}\leq M_2\leq\frac{M_1}{\lambda^l}$, then for any $0<w_1<w_2<\bar{\omega}$, we consider two situations:\\
(i). If $\lambda^l\cdot w_2\leq w_1<w_2$, we have
\[
\frac{w_2(F(w_2)-F(w_1))}{\int_{w_1}^{w_2}udF(u)}\leq \frac{w_2(F(w_2)-F(w_1))}{\int_{w_1}^{w_2}w_1dF(u)}=\frac{w_2}{w_1}\leq 1/\lambda^l
\]
(ii). If $w_1<\lambda^l\cdot w_2$, we have
\[
w_2\cdot F(w_1)\geq\frac{1}{\lambda^l}\cdot w_1\cdot F(w_1)\geq \frac{M_1}{\lambda^l}\cdot \int_{0}^{w_1}udF(u)
\]
Thus, we have that
\[\begin{aligned}
w_2 (F(w_2)-F(w_1))&\leq M_2\cdot\int_{0}^{w_2}udF(u)-\frac{M_1}{\lambda^l}\cdot\int_{0}^{w_1}udF(u)=M_2\cdot\int_{w_1}^{w_2}uF(u)+(M_2-\frac{M_1}{\lambda^l})\cdot\int_{0}^{w_1}udF(u)\\
&\leq M_2\cdot\int_{w_1}^{w_2}udF(u)
\end{aligned}\]
Our proof is completed by setting $M=\max\{ \frac{1}{\lambda^l}, M_2 \}$.

\section{Proof of Lemma \ref{newlemma1}}
Since $F_i(\cdot)$ has a continuous density function $f_i(\cdot)$, implicit function theorem implies that the threshold $\mu^{ii_j}_t(\mathbf{\tilde{c}}_i)$ is differentiable over $\tilde{c}_{ii_k}$ and the derivatives can be obtained by taking derivative over both sides of \eqref{up2}. We have that
\begin{equation}
\frac{\partial \mu^{ii_j}_t(\mathbf{\tilde{c}}_i)}{\partial \tilde{c}_{ii_k}}=\frac{1}{p_i\cdot b_{ii_k}\cdot (T-t+1)\cdot\mu^{ii_j}_t(\mathbf{\tilde{c}}_i)\cdot f_i(\mu^{ii_j}_t(\mathbf{\tilde{c}}_i))}
\end{equation}
Thus, we have that
\[
\frac{\partial}{\partial \tilde{c}_{ii_k}}(\frac{1}{\mu^{ii_j}_t(\mathbf{\tilde{c}}_i)})=-\frac{1}{(\mu^{ii_j}_t(\mathbf{\tilde{c}}_i))^2}\cdot \frac{\partial \mu^{ii_j}_t(\mathbf{\tilde{c}}_i)}{\partial \tilde{c}_{ii_k}}=-\frac{1}{p_i\cdot b_{ii_k}\cdot(T-t+1)\cdot(\mu^{ii_j}_t(\mathbf{\tilde{c}}_i))^3\cdot f_i(\mu^{ii_j}_t(\mathbf{\tilde{c}}_i))}
\]
From Condition $2$ in Assumption \ref{assumption1}, $(\mu^{ii_j}_t(\mathbf{\tilde{c}}_i))^3\cdot f_i(\mu^{ii_j}_t(\mathbf{\tilde{c}}_i))$ is a non-decreasing function over $\mu^{ii_j}_t(\mathbf{\tilde{c}}_i)$. Also, noting that $\mu^{ii_j}_t(\mathbf{\tilde{c}}_i)$ is non-decreasing over $\tilde{c}_{ii_k}$, we have $(\mu^{ii_j}_t(\mathbf{\tilde{c}}_i))^3\cdot f_i(\mu^{ii_j}_t(\mathbf{\tilde{c}}_i))$ is a non-decreasing function over $\tilde{c}_{ii_k}$. Thus, we have that $\frac{\partial}{\partial \tilde{c}_{ii_k}}(\frac{1}{\mu^{ii_j}_t(\mathbf{\tilde{c}}_i)})$ is a non-decreasing function over $\tilde{c}_{ii_k}$, which implies the function $\frac{1}{\mu^{ii_j}_t(\mathbf{\tilde{c}}_i)}$ is a convex function over $\tilde{c}_{ii_k}$.

\section{Proof of Lemma \ref{newlemma2}}
First, note that
\begin{equation}\label{pfap18}
\frac{\partial F_i(\mu^{ii_j}_t(\mathbf{\tilde{c}}_i))}{\partial \tilde{c}_{ii_k}}=f_i(\mu^{ii_j}_{t}(\mathbf{\tilde{c}}_i))\cdot\frac{\partial \mu^{ii_j}_{t}(\mathbf{\tilde{c}}_i)}{\partial \tilde{c}_{ii_k}}=\frac{1}{p_i\cdot b_{ii_k}\cdot(T-t+1)\cdot\mu^{ii_j}_{t}(\mathbf{\tilde{c}}_i)}
\end{equation}
which implies
\begin{equation}\label{pfap19}
p_i\cdot (T-t+1)\cdot(F_i(\mu^{ii_j}_{t}(\mathbf{\tilde{c}}_i))-F_i(\mu^{ii_j}_{t}(\mathbf{\tilde{c}}_i-b_{ii_k}\cdot u\cdot\mathbf{e}_{i_k})))=\int_{\tilde{c}_{ii_k}/b_{ii_k}-u}^{\tilde{c}_{ii_k}/b_{ii_k}}
\frac{1}{\mu^{ii_j}_{t}(\mathbf{\tilde{c}}_i-(\tilde{c}_{ii_k}-b_{ii_k}\cdot v)\cdot\mathbf{e}_{i_k})}dv
\end{equation}
From Lemma \ref{newlemma1}, the function $\frac{1}{\mu^{ii_j}_{t}(\mathbf{\tilde{c}}_i-(\tilde{c}_{ii_k}-b_{ii_k}\cdot v)\cdot\mathbf{e}_{i_k})}$ is a convex function over $v$, which implies that
\begin{equation}\label{pfap20}
\int_{\tilde{c}_{ii_k}/b_{ii_k}-u}^{\tilde{c}_{ii_k}/b_{ii_k}}
\frac{1}{\mu^{ii_j}_{t}(\mathbf{\tilde{c}}_i-(\tilde{c}_{ii_k}-b_{ii_k}\cdot v)\cdot\mathbf{e}_{i_k})}dv
\leq\frac{u}{2}\cdot\left[\frac{1}{\mu^{ii_j}_{t}(\mathbf{\tilde{c}}_i)}+\frac{1}{\mu^{ii_j}_{t}(\mathbf{\tilde{c}}_i-b_{ii_k}\cdot u\cdot\mathbf{e}_{i_k})}\right]
\end{equation}
Thus, we have that
\begin{equation}\label{pfap21}
p_i\cdot(T-t+1)\cdot(F_i(\mu^{ii_j}_{t}(\mathbf{\tilde{c}}_i))-F_i(\mu^{ii_j}_{t}(\mathbf{\tilde{c}}_i-u\cdot\mathbf{e}_{i_k})))\leq \frac{u}{2}\cdot\left[\frac{1}{\mu^{ii_j}_{t}(\mathbf{\tilde{c}}_i)}+\frac{1}{\mu^{ii_j}_{t}(\mathbf{\tilde{c}}_i-b_{ii_k}\cdot u\cdot\mathbf{e}_{i_k})}\right]
\end{equation}
We then define a function $g(u)$ as follows:
\begin{equation}\label{pfap22}
g(u):=\frac{p_i\cdot (T-t+1)}{u^2}\cdot(F_i(\mu^{ii_j}_{t}(\mathbf{\tilde{c}}))-F_i(\mu^{ii_j}_{t}(\mathbf{\tilde{c}}-b_{ii_k}\cdot u\cdot\mathbf{e}_{i_k})))-\frac{1}{u\cdot\mu^{ii_j}_{t}(\mathbf{\tilde{c}}_i)}
\end{equation}
Then the derivative of $g(u)$ over $u$ could be obtained as follows:
\begin{equation}\label{pfap23}
\begin{aligned}
g'(u)&=-\frac{2p_i(T-t+1)}{u^3}\cdot(F_i(\mu^{ii_j}_{t}(\mathbf{\tilde{c}}_i))-F_i(\mu^{ii_j}_{t}(\mathbf{\tilde{c}}_i-b_{ii_k} u\cdot\mathbf{e}_{i_k})))+\frac{1}{u^2}
\cdot\left[\frac{1}{\mu^{ii_j}_{t}(\mathbf{\tilde{c}}_i)}+\frac{1}{\mu^{ii_j}_{t}(\mathbf{\tilde{c}}_i-b_{ii_k}u\cdot\mathbf{e}_{i_k})}\right]\\
&=-\frac{2}{u^3}\left\{ p_i(T-t+1)(F_i(\mu^{ii_j}_{t}(\mathbf{\tilde{c}}_i))-F_i(\mu^{ii_j}_{t}(\mathbf{\tilde{c}}_i-b_{ii_k}u\cdot\mathbf{e}_{i_k})))-\frac{u}{2}\cdot
\left[\frac{1}{\mu^{ii_j}_{t}(\mathbf{\tilde{c}}_i)}+\frac{1}{\mu^{ii_j}_{t}(\mathbf{\tilde{c}}_i-b_{ii_k}u\cdot\mathbf{e}_{i_k})}\right] \right\}
\end{aligned}
\end{equation}
Thus, we have that $g'(u)\geq0$, which implies that $g(u)\leq g(\tilde{c}_{ii_k}/b_{ii_k})$. Then we have
\[\begin{aligned}
&g(u)=\frac{p_i(T-t+1)}{u^2}\cdot(F_i(\mu^{ii_j}_{t}(\mathbf{\tilde{c}}_i))-F_i(\mu^{ii_j}_{t}(\mathbf{\tilde{c}}_i-b_{ii_k}u\cdot\mathbf{e}_{i_k})))-
\frac{1}{u\cdot\mu^{ii_j}_{t}(\mathbf{\tilde{c}}_i)}\\
&\leq g(\tilde{c}_{ii_k}/b_{ii_k})=\frac{p_i(T-t+1)}{(\tilde{c}_{ii_k}/b_{ii_k})^2}\cdot(F_i(\mu^{ii_j}_{t}(\mathbf{\tilde{c}}_i))-F_i(\mu^{ii_j}_{t}(\mathbf{\tilde{c}}_i-\tilde{c}_{ii_k}\cdot\mathbf{e}_{i_k})))-
\frac{1}{\tilde{c}_{ii_k}/b_{ii_k}\cdot\mu^{ii_j}_{t}(\mathbf{\tilde{c}}_i)}\\
\end{aligned}\]
Thus, we have that
\[\begin{aligned}
F_i(\mu^{ii_j}_{t}(\mathbf{\tilde{c}}_i))-F_i(\mu^{ii_j}_{t}(\mathbf{\tilde{c}}_i-b_{ii_k}u\cdot\mathbf{e}_{i_k}))&\leq \frac{u^2}{(\tilde{c}_{ii_k}/b_{ii_k})^2}\cdot(F_i(\mu^{ii_j}_{t}(\mathbf{\tilde{c}}_i))-F_i(\mu^{ii_j}_{t}(\mathbf{\tilde{c}}_i-\tilde{c}_{ii_k}\cdot\mathbf{e}_{i_k})))\\
&+\frac{u}{p_i\cdot(T-t+1)\cdot\mu^{ii_j}_{t}(\mathbf{\tilde{c}}_i)}-\frac{u^2}{\tilde{c}_{ii_k}/b_{ii_k}\cdot p_i(T-t+1)\cdot\mu^{ii_j}_{t}(\mathbf{\tilde{c}}_i)}
\end{aligned}\]
Moreover, due to Lemma \ref{assumplemma1}, we have that
\[\begin{aligned}
\frac{\mu^{ii_j}_{t}(\mathbf{\tilde{c}}_i)\cdot(F_i(\mu^{ii_j}_{t}(\mathbf{\tilde{c}}_i))-F_i(\mu^{ii_j}_{t}(\mathbf{\tilde{c}}_i-\tilde{c}_{ii_k}\cdot\mathbf{e}_{i_k})))   }{\tilde{c}_{ii_k}}&=\frac{\mu^{ii_j}_{t}(\mathbf{\tilde{c}}_i)\cdot(F_i(\mu^{ii_j}_{t}(\mathbf{\tilde{c}}_i))-F_i(\mu^{ii_j}_{t}(\mathbf{\tilde{c}}_i
-\tilde{c}_{ii_k}\cdot\mathbf{e}_{i_k})))   }{p_ib_{ii_k}(T-t+1)\cdot\int_{\mu^{ii_j}_{t}(\mathbf{\tilde{c}}_i-\tilde{c}_{ii_k}\cdot\mathbf{e}_{i_k})}^{\mu^{ii_j}_{t}(\mathbf{\tilde{c}}_i)}wdF_i(w) }\\
&\leq\frac{M}{p_ib_{ii_k}(T-t+1)}
\end{aligned}\]
Thus, we have that
\[
F_i(\mu^{ii_j}_{t}(\mathbf{\tilde{c}}_i))-F_i(\mu^{ii_j}_{t}(\mathbf{\tilde{c}}_i-b_{ii_k}u\cdot\mathbf{e}_{i_k}))
\leq\frac{(M-1)\cdot u^2}{\tilde{c}_{ii_k}/b_{ii_k}\cdot p_i\cdot(T-t+1)\cdot\mu^{ii_j}_{t}(\mathbf{\tilde{c}}_i)}+\frac{u}{p_i\cdot(T-t+1)\cdot\mu^{ii_j}_{t}(\mathbf{\tilde{c}}_i)}
\]
which completes our proof.

\section{Proof of Lemma \ref{Main1}}
Since we focus on a fixed $i$ during the proof, we will omit the index $i$ by using index $j$ to denote index $i_j$, using $\mu^j_t(\mathbf{c}^t)$ to denote $\mu^{ii_j}_t(\mathbf{c}^t_i)$ and using $b_j$ to denote $b_{ii_j}$.\\
We only focus on the time period $t$ such that $t<T-K$. The reason is that for any $t<T-K$, we must have for each $j$ and every possible remaining capacity $\mathbf{c}^t$, the threshold $\mu^j_t(\mathbf{c}^t)$ will satisfy $\mu^j_t(\mathbf{c}^t)<\bar{\omega}$. Thus, by only focusing on $t<T-K$, the value of $\mu^j_t(\mathbf{c}^t)$ is always finite and we could fully exploit the two conditions over the distribution function proposed in Assumption \ref{assumption1}. Since $K$ is a constant independent of $T$, the cost of ignoring the last $K$ time periods is at most a constant regret, which must be upper bounded by $r_{\max}\cdot K$. In what follows, we assume $t<T-K$. Define
\begin{equation}\label{pfap1}
\rho_t(\mathbf{c}^t)=(T-t+1)\cdot p\cdot\sum_{j=1}^{m}(r_j-r_{j+1})\cdot F(\mu^j_t(\mathbf{c}^t))-V_t(\mathbf{c}^t)
\end{equation}
where $V_t(\mathbf{c}^t)$ is the "to-go" expected total reward collected by the policy, i.e., the expected cumulative reward collected by the policy from serving requests with reward $\mathbf{r}_i$ from time period $t$ to the last time period $T$ given the remaining capacity at time period $t$ is $\mathbf{c}^t$. Then our final goal is to bound $\rho_1(\mathbf{c}^*)$. Given the remaining capacity $\mathbf{c}^t$ and the current time period $t$, denote $\{j_1,j_2,\dots,j_W\}$ as a subset of $\{1,2,\dots,m\}$ where $j_1=1$ and $\{j_1,j_2,\dots,j_W\}$ satisfies that
\[
h^1_t(\mathbf{c}^t)=h^{j_1}_t(\mathbf{c}^t)\leq h^{j_2}_t(\mathbf{c}^t)\leq\dots\leq h^{j_W}_t(\mathbf{c}^t)=\max\{ h^1_t(\mathbf{c}^t), h^2_t(\mathbf{c}^t),\dots,h^m_t(\mathbf{c}^t) \}
\]
and for each $1\leq w\leq W$ and every $j$ such that $j_{w}<j\leq j_{w+1}-1$, we must have $h_t^j(\mathbf{c}^t)<h_t^{j_{w}}(\mathbf{c}^t)$, where we set $j_{W+1}=m+1$. Note that the definition of $\{j_1,j_2,\dots,j_W\}$ will enable us to obtain the following relationship.
\begin{claim}
For any $1\leq w\leq W$, we must have
\begin{equation}\label{pfap10}
\mu_t^{j_w}(\mathbf{c}^t)\geq c^t_j/b_j \text{~~for~every~} j\text{~such~that~}j_w<j\leq j_{w+1}-1
\end{equation}
\end{claim}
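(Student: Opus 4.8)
The plan is to derive \eqref{pfap10} from two ingredients: the monotonicity of the untruncated thresholds $\mu^{j}_t(\mathbf{c}^t)$ in the index $j$, and the defining property of the running-maximum subsequence $\{j_1,\dots,j_W\}$. Throughout I use the simplified notation already adopted in this proof (index $j$ for $i_j$, $b_j$ for $b_{ii_j}$, and $\mu^j_t$ for $\mu^{ii_j}_t$). First I would record that $\mu^{j}_t(\mathbf{c}^t)$ is non-decreasing in $j$. This is immediate from the defining equation \eqref{up2}: as $j$ grows, the right-hand side $\frac{\sum_{k=1}^{j}c^t_k/b_k}{T-t+1}$ only increases, since each newly added term $c^t_{j}/b_{j}$ is nonnegative; because the left-hand side $p\cdot\int_0^{\mu}u\,dF(u)$ is strictly increasing in $\mu$, the solution $\mu^{j}_t(\mathbf{c}^t)$ must be non-decreasing in $j$ as well. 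In particular, $\mu^{j}_t(\mathbf{c}^t)\geq\mu^{j_w}_t(\mathbf{c}^t)$ whenever $j>j_w$. (Since we are in the regime $t<T-K$, all thresholds are finite, so this comparison is meaningful.)

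Next, fix $w$ and an index $j$ with $j_w<j\leq j_{w+1}-1$. By the defining property of the subsequence, any such $j$ satisfies $h^{j}_t(\mathbf{c}^t)<h^{j_w}_t(\mathbf{c}^t)$. Using the truncation $h^{j_w}_t(\mathbf{c}^t)=\min\{c^t_{j_w}/b_{j_w},\,\mu^{j_w}_t(\mathbf{c}^t)\}\leq\mu^{j_w}_t(\mathbf{c}^t)$ together with the monotonicity from the previous step, I would chain the inequalities into
\[
h^{j}_t(\mathbf{c}^t)<h^{j_w}_t(\mathbf{c}^t)\leq\mu^{j_w}_t(\mathbf{c}^t)\leq\mu^{j}_t(\mathbf{c}^t).
\]

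Finally I would read off the claim by inspecting which term is active in the minimum $h^{j}_t(\mathbf{c}^t)=\min\{c^t_j/b_j,\,\mu^{j}_t(\mathbf{c}^t)\}$. The display above shows $h^{j}_t(\mathbf{c}^t)<\mu^{j}_t(\mathbf{c}^t)$, so the minimum cannot be attained at $\mu^{j}_t(\mathbf{c}^t)$; hence $h^{j}_t(\mathbf{c}^t)=c^t_j/b_j$. Substituting this back into the chain gives $c^t_j/b_j=h^{j}_t(\mathbf{c}^t)<\mu^{j_w}_t(\mathbf{c}^t)$, which is exactly \eqref{pfap10} (indeed with strict inequality, so a fortiori the claimed $\geq$). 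I do not expect any serious obstacle here; the only point demanding care is the bookkeeping around the truncation, namely recognizing that the strict inequality $h^{j}_t<\mu^{j}_t$ forces the capacity term to be the active one in the minimum, and confirming that the monotonicity of $\mu^{j}_t$ in $j$ follows cleanly from the additive structure of the right-hand side of \eqref{up2}.
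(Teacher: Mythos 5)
Your proposal is correct and follows essentially the same route as the paper's own proof: both combine the strict inequality $h^{j}_t(\mathbf{c}^t)<h^{j_w}_t(\mathbf{c}^t)\leq\mu^{j_w}_t(\mathbf{c}^t)$ from the defining property of the subsequence with the monotonicity $\mu^{j_w}_t(\mathbf{c}^t)\leq\mu^{j}_t(\mathbf{c}^t)$ to force the minimum in $h^{j}_t(\mathbf{c}^t)=\min\{c^t_j/b_j,\mu^{j}_t(\mathbf{c}^t)\}$ to be the capacity term. You merely make explicit two steps the paper calls obvious (the monotonicity of $\mu^{j}_t$ in $j$ via \eqref{up2}, and the case analysis on the minimum), and you obtain the slightly stronger strict inequality.
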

\begin{proof}{Proof:}
we have that $\mu_t^{j_w}(\mathbf{c}^t)\geq h_t^{j_w}(\mathbf{c}^t)> h_t^j(\mathbf{c}^t)$ for any $j$ such that $j_w<j\leq j_{w+1}-1$. Also, it is obvious that $\mu_t^{j_w}(\mathbf{c}^t)\leq\mu_t^j(\mathbf{c}^t)$ since $j_w<j$. Note that $h_t^j(\mathbf{c}^t)=\min\{c^t_j/b_j,\mu_t^j(\mathbf{c}^t)\}$, it must hold that $\mu_t^{j_w}(\mathbf{c}^t)\geq c^t_j/b_j$.
\Halmos
\end{proof}
~\\
Moreover, according to our policy in Algorithm \ref{ATP1}, for each $1\leq w\leq W$, request $t$ with size $\mathbf{b}\cdot u$ will be served by resource $j_w$ if and only if $h_t^{j_{w-1}}(\mathbf{c}^t)<u\leq h_t^{j_w}(\mathbf{c}^t)$, where we set $j_0=0$ and $h_t^{j_0}(\mathbf{c}^t)=0$, and request $t$ will be discarded if and only if $u>h_t^{j_W}(\mathbf{c}^t)$. Then, we can write down recursion of $V_t(\mathbf{c}^t)$ as follows:
\begin{equation}\label{pfap2}
\begin{aligned}
V_t(\mathbf{c}^t)&=(1-p)V_{t+1}(\mathbf{c}^t)+p\cdot\sum_{w=1}^{W}\int_{h_t^{j_{w-1}}(\mathbf{c}^t)}^{h_t^{j_w}(\mathbf{c}^t)}\{ \tr_{j_w}+V_{t+1}(\mathbf{c}^t-b_{j_w}u\cdot\mathbf{e}_{j_w})\}dF(u)
+p\cdot(1-F(h_t^{j_W}(\mathbf{c}^t)))\cdot V_{t+1}(\mathbf{c}^t)\\
&=(1-p\cdot F(h_t^{j_W}(\mathbf{c}^t)))\cdot V_{t+1}(\mathbf{c}^t)+p\cdot\sum_{w=1}^{W}\int_{h_t^{j_{w-1}}(\mathbf{c}^t)}^{h_t^{j_w}(\mathbf{c}^t)}\{ \tr_{j_w}+V_{t+1}(\mathbf{c}^t-b_{j_w}u\cdot\mathbf{e}_{j_w})\}dF(u)
\end{aligned}
\end{equation}
where $\mathbf{e}_{j_w}$ denotes a $m$ dimensional vector with $1$ at $j_w$th component and $0$ at other components. We could substitute $V_t(\cdot)$ by $(T-t+1)p\cdot\sum_{j=1}^{m}(\tr_j-\tr_{j+1})\cdot F(\mu^j_t(\cdot))-\rho_t(\cdot)$ and substitute $V_{t+1}(\cdot)$ by $(T-t)p\cdot\sum_{j=1}^{m}(\tr_j-\tr_{j+1})\cdot F(\mu^j_{t+1}(\cdot))-\rho_{t+1}(\cdot)$ in the above recursion. Then we will get the recursion of $\rho_t(\mathbf{c}^t)$ as follows:
\begin{equation}
\begin{aligned}
&(T-t+1)p\sum_{j=1}^{m}(\tr_j-\tr_{j+1})F(\mu^j_t(\mathbf{c}^t))-\rho_t(\mathbf{c}^t)=(1-pF(h_t^{j_W}(\mathbf{c}^t))) \{(T-t)p\sum_{j=1}^{m}(\tr_j-\tr_{j+1})F(\mu^j_{t+1}(\mathbf{c}^t))-\rho_{t+1}(\mathbf{c}^t)\}\\
&+p\sum_{w=1}^{W}\int_{h_t^{j_{w-1}}(\mathbf{c}^t)}^{h_t^{j_w}(\mathbf{c}^t)}\{ \tr_{j_w}+(T-t)p\sum_{k=1}^{m}(\tr_k-\tr_{k+1})F(\mu^k_{t+1}(\mathbf{c}^t-b_{j_w}u\cdot\mathbf{e}_{j_w}))-\rho_{t+1}(\mathbf{c}^t-b_{j_w}u\cdot\mathbf{e}_{j_w}) \}dF(u)\\
\end{aligned}
\end{equation}
which implies that
\begin{equation}\label{pfap3}
\begin{aligned}
&\rho_t(\mathbf{c}^t)=(1-pF(h_t^{j_W}(\mathbf{c}^t)))\cdot\rho_{t+1}(\mathbf{c}^t)+p\sum_{w=1}^{W}\int_{h_t^{j_{w-1}}(\mathbf{c}^t)}^{h_t^{j_w}(\mathbf{c}^t)}\rho_{t+1}(\mathbf{c}^t-b_{j_w}u\cdot\mathbf{e}_{j_w})dF(u)\\
&+\underbrace{ p\cdot\sum_{j=1}^{m}(\tr_j-\tr_{j+1})F(\mu^j_t(\mathbf{c}^t))-p\cdot\sum_{w=1}^{W}\tr_{j_w}\cdot(F(h_t^{j_w}(\mathbf{c}^t))-F(h_t^{j_{w-1}}(\mathbf{c}^t)) ) }_{\text{I}}\\
&+\underbrace{  (T-t)p\cdot\sum_{j=1}^{m}(\tr_j-\tr_{j+1})\cdot( F(\mu^j_t(\mathbf{c}^t))-F(\mu^j_{t+1}(\mathbf{c}^t)) )  }_{\text{II}}\\
&+\underbrace{ p\cdot\sum_{w=1}^{W}\int_{h_t^{j_{w-1}}(\mathbf{c}^t)}^{h_t^{j_w}(\mathbf{c}^t)}\{(T-t)p\sum_{k=1}^{m}(\tr_k-\tr_{k+1})\cdot( F(\mu^k_{t+1}(\mathbf{c}^t))-F(\mu^k_{t+1}(\mathbf{c}^t-b_{j_w}u\cdot\mathbf{e}_{j_w})) )\}dF(u)  }_{\text{III}}
\end{aligned}
\end{equation}
Denote $\bar{\rho}_{t+1}$ as the maximal value of $\rho_{t+1}(\mathbf{c}^{t+1})$ for all the possible remaining capacity $\mathbf{c}^{t+1}$. Then we have
\[
\rho_t(\mathbf{c}^t)-\bar{\rho}_{t+1}\leq \text{I}+\text{II}+\text{III}
\]
In the following, we will bound the three terms I, II, III separately.\\
\textbf{Bound I:} By rearranging terms, we get
\begin{equation}\label{pfap9}
\begin{aligned}
\text{I}&=p\cdot\sum_{j=1}^{m}(\tr_j-\tr_{j+1})F(\mu^j_t(\mathbf{c}^t))-p\cdot\sum_{w=1}^{W}\tr_{j_w}\cdot(F(h_t^{j_w}(\mathbf{c}^t))-F(h_t^{j_{w-1}}(\mathbf{c}^t)) )\\
&=p\cdot\sum_{w=1}^{W}\{ \sum_{j=j_w}^{j_{w+1}-1}(\tr_j-\tr_{j+1})\cdot[F(\mu^j_t(\mathbf{c}^t))-F(h_t^{j_w}(\mathbf{c}^t))] \}
\end{aligned}
\end{equation}
Then, for each fixed $j$ such that $j_w\leq j\leq j_{w+1}-1$, we will bound the term $F(\mu^j_t(\mathbf{c}^t))-F(h_t^{j_w}(\mathbf{c}^t))$. We will first prove the following bound.
\begin{claim}
For each $1\leq w\leq W$ and each $j$ such that $j_w\leq j\leq j_{w+1}-1$, we must have
\begin{equation}\label{pfap8}
F(\mu^j_t(\mathbf{c}^t))-F(h_t^{j_w}(\mathbf{c}^t))\leq \frac{M\cdot(j-j_w)}{p\cdot(T-t+1)}+[ F(\mu_t^{j_w}(\mathbf{c}^t))-F(h_t^{j_w}(\mathbf{c}^t)) ]\cdot 1_{\{ c^t_{j_w}/b_{j_w}<\mu_t^{j_w}(\mathbf{c}^t) \}}
\end{equation}
\end{claim}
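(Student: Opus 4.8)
The plan is to reduce the claim, in both branches of the indicator, to the single ``index-gap'' inequality
\[
F(\mu^j_t(\mathbf{c}^t))-F(\mu^{j_w}_t(\mathbf{c}^t))\leq \frac{M\cdot(j-j_w)}{p\cdot(T-t+1)},
\]
and then to prove this inequality by telescoping over consecutive thresholds and invoking Lemma \ref{assumplemma1}. First I would split according to the definition $h^{j_w}_t(\mathbf{c}^t)=\min\{c^t_{j_w}/b_{j_w},\mu^{j_w}_t(\mathbf{c}^t)\}$. When $c^t_{j_w}/b_{j_w}\geq \mu^{j_w}_t(\mathbf{c}^t)$ we have $h^{j_w}_t(\mathbf{c}^t)=\mu^{j_w}_t(\mathbf{c}^t)$ and the indicator vanishes, so the claim is exactly the index-gap inequality. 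When $c^t_{j_w}/b_{j_w}<\mu^{j_w}_t(\mathbf{c}^t)$ the indicator equals one and contributes precisely $F(\mu^{j_w}_t(\mathbf{c}^t))-F(h^{j_w}_t(\mathbf{c}^t))$; subtracting this from the left-hand side $F(\mu^j_t(\mathbf{c}^t))-F(h^{j_w}_t(\mathbf{c}^t))$ again leaves exactly $F(\mu^j_t(\mathbf{c}^t))-F(\mu^{j_w}_t(\mathbf{c}^t))$. Thus in both cases it suffices to establish the index-gap inequality.

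To prove it I would telescope, writing $F(\mu^j_t(\mathbf{c}^t))-F(\mu^{j_w}_t(\mathbf{c}^t))=\sum_{l=j_w+1}^{j}\big(F(\mu^l_t(\mathbf{c}^t))-F(\mu^{l-1}_t(\mathbf{c}^t))\big)$, which is legitimate because restricting to $t<T-K$ keeps every threshold strictly below $\bar{\omega}$ and hence finite. Subtracting two consecutive instances of the defining equation \eqref{up2} gives the identity $p\cdot\int_{\mu^{l-1}_t(\mathbf{c}^t)}^{\mu^l_t(\mathbf{c}^t)}u\,dF(u)=\frac{c^t_l/b_l}{T-t+1}$. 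Applying Lemma \ref{assumplemma1} with $w_1=\mu^{l-1}_t(\mathbf{c}^t)$ and $w_2=\mu^l_t(\mathbf{c}^t)$ then yields
\[
F(\mu^l_t(\mathbf{c}^t))-F(\mu^{l-1}_t(\mathbf{c}^t))\leq \frac{M}{\mu^l_t(\mathbf{c}^t)}\int_{\mu^{l-1}_t(\mathbf{c}^t)}^{\mu^l_t(\mathbf{c}^t)}u\,dF(u)=\frac{M}{\mu^l_t(\mathbf{c}^t)}\cdot\frac{c^t_l/b_l}{p\,(T-t+1)}.
\]

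The key remaining point, and where I expect the only real subtlety, is to show the factor $\frac{c^t_l/b_l}{\mu^l_t(\mathbf{c}^t)}$ is at most $1$ for every index $l$ in the summation range $j_w+1\leq l\leq j\leq j_{w+1}-1$. This is exactly where the previous Claim \eqref{pfap10} enters: it gives $\mu^{j_w}_t(\mathbf{c}^t)\geq c^t_l/b_l$ for all such $l$, and since $l>j_w$ the monotonicity of the thresholds in the index (immediate from \eqref{up2}, as its right-hand side is non-decreasing in the index while $\int_0^\mu u\,dF(u)$ is increasing in $\mu$) gives $\mu^l_t(\mathbf{c}^t)\geq \mu^{j_w}_t(\mathbf{c}^t)\geq c^t_l/b_l$. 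Hence each telescoped term is bounded by $\frac{M}{p(T-t+1)}$, and summing the $j-j_w$ terms delivers the index-gap inequality and therefore the claim. The only technical wrinkle is the boundary case in which a lower threshold $\mu^{l-1}_t(\mathbf{c}^t)$ equals zero (all higher-reward capacities up to that index already exhausted); there one applies the $w_1\to 0$ form of the estimate, $wF(w)/\int_0^w u\,dF(u)\leq M$, which is established inside the proof of Lemma \ref{assumplemma1} and yields the same per-term bound.
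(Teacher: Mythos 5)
Your proof is correct and follows essentially the same route as the paper: the same decomposition of $F(\mu^j_t(\mathbf{c}^t))-F(h_t^{j_w}(\mathbf{c}^t))$ into the index-gap term plus the truncation term governed by the indicator, the same use of Lemma \ref{assumplemma1} together with the defining equation \eqref{up2}, and the same invocation of the preceding claim \eqref{pfap10} to bound the capacity-over-threshold ratios by one. The only (immaterial) difference is that you telescope Lemma \ref{assumplemma1} over consecutive intervals $[\mu^{l-1}_t(\mathbf{c}^t),\mu^l_t(\mathbf{c}^t)]$, whereas the paper applies it once to the whole interval $[\mu^{j_w}_t(\mathbf{c}^t),\mu^j_t(\mathbf{c}^t)]$ and then bounds each $c^t_l/b_l$ by $\mu^j_t(\mathbf{c}^t)$; both yield the identical bound.
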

\begin{proof}{Proof:} note that from Lemma \ref{assumplemma1}, we have that
\begin{equation}\label{pfap6}
\mu_t^j(\mathbf{c}^t)\cdot( F(\mu_t^j(\mathbf{c}^t))-F(\mu_t^{j_w}(\mathbf{c}^t)) )\leq M\cdot\int_{\mu_t^{j_w}(\mathbf{c}^t)}^{\mu_t^j(\mathbf{c}^t)}udF(u)=\frac{M\cdot(c^t_{j_w+1}/b_{j_w+1}+\dots+c^t_j/b_j)}{p\cdot(T-t+1)}
\end{equation}
Thus, we have
\begin{equation}\label{pfap7}
\begin{aligned}
F(\mu^j_t(\mathbf{c}^t))-F(h_t^{j_w}(\mathbf{c}^t))&=F(\mu^j_t(\mathbf{c}^t))-F(\mu_t^{j_w}(\mathbf{c}^t))+F(\mu_t^{j_w}(\mathbf{c}^t))-F(h_t^{j_w}(\mathbf{c}^t))\\
&\leq \frac{M\cdot(c^t_{j_w+1}/b_{j_w+1}+\dots+c^t_j/b_j)}{\mu^j_t(\mathbf{c}^t)\cdot p\cdot(T-t+1)}+F(\mu_t^{j_w}(\mathbf{c}^t))-F(h_t^{j_w}(\mathbf{c}^t))\\
&\leq \frac{M\cdot(j-j_w)}{p\cdot(T-t+1)}+F(\mu_t^{j_w}(\mathbf{c}^t))-F(h_t^{j_w}(\mathbf{c}^t))
\end{aligned}
\end{equation}
where the first inequality holds due to \eqref{pfap6} and the second inequality holds by noting that according to \eqref{pfap10}, we must have
\[
\mu^j_t(\mathbf{c}^t)\geq\mu_t^{j_w}(\mathbf{c}^t)\geq\max\{ c^t_{j_w+1}/b_{j_w+1},\dots,c^t_j/b_j\}
\]
Note that $h_t^{j_w}(\mathbf{c}^t)=\min\{c^t_{j_w}/b_{j_w},\mu_t^{j_w}(\mathbf{c}^t)\}$, then $h_t^{j_w}(\mathbf{c}^t)$ differentiates from $\mu_t^{j_w}(\mathbf{c}^t)$ if and only if $c^t_{j_w}/b_{j_w}<\mu_t^{j_w}(\mathbf{c}^t)$, thus we have
\[
F(\mu^j_t(\mathbf{c}^t))-F(h_t^{j_w}(\mathbf{c}^t))\leq \frac{M\cdot(j-j_w)}{p\cdot(T-t+1)}+[ F(\mu_t^{j_w}(\mathbf{c}^t))-F(h_t^{j_w}(\mathbf{c}^t)) ]\cdot 1_{\{ c^t_{j_w}/b_{j_w}<\mu_t^{j_w}(\mathbf{c}^t) \}}
\]
\Halmos
\end{proof}
%Thus, we have the following upper bound regarding the term I:
%\begin{equation}\label{pfap9}
%\text{I}\leq \sum_{w=1}^{S}\sum_{j=j_w}^{j_{w+1}-1}\frac{M\cdot(r_j-r_{j+1})\cdot(j-j_w)}{T-t+1}+p\cdot\sum_{w=1}^{S}(r_{j_w}-r_{j_{w+1}})\cdot[ F(\mu_t^{j_w}(\mathbf{x}))-F(h_t^{j_w}(\mathbf{x})) ]\cdot 1_{\{ x_{j_w}<\mu_t^{j_w}(\mathbf{x}) \}}
%\end{equation}
We then bound the term $[ F(\mu_t^{j_w}(\mathbf{c}^t))-F(h_t^{j_w}(\mathbf{c}^t)) ]\cdot 1_{\{ c^t_{j_w}/b_{j_w}<\mu_t^{j_w}(\mathbf{c}^t) \}}$.
\begin{claim}
For any $1\leq w\leq W$, it holds that
\begin{equation}\label{pfap12}
[ F(\mu_t^{j_w}(\mathbf{c}^t))-F(h_t^{j_w}(\mathbf{c}^t)) ]\cdot 1_{\{ c^t_{j_w}/b_{j_w}<\mu_t^{j_w}(\mathbf{c}^t) \}}\leq \frac{M\cdot j_w}{p\cdot(T-t+1)}
\end{equation}
\end{claim}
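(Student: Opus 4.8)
Throughout I omit the index $i$ exactly as the proof of Lemma \ref{Main1} does, writing $c^t$, $b_j$, $\mu^j_t(\mathbf{c}^t)$, $h^j_t(\mathbf{c}^t)$, $F$, $p$ for the objects of the $i$th subproblem. The plan is to reduce to the case where the indicator is active, so that $h^{j_w}_t(\mathbf{c}^t)=c^t_{j_w}/b_{j_w}<\mu^{j_w}_t(\mathbf{c}^t)$ and the quantity to bound is $F(\mu^{j_w}_t(\mathbf{c}^t))-F(c^t_{j_w}/b_{j_w})$; when the indicator vanishes the left side is $0$ and there is nothing to prove. The first thing I would record is that, by the very definition of the subsequence $\{j_1,\dots,j_W\}$, the index $j_w$ is a running maximum of $\{h^j_t(\mathbf{c}^t)\}_{j=1}^m$: every $r\le j_w$ either equals some $j_{w'}$ with $w'\le w$, giving $h^r_t(\mathbf{c}^t)\le h^{j_w}_t(\mathbf{c}^t)$, or lies strictly between two consecutive subsequence indices, again giving $h^r_t(\mathbf{c}^t)<h^{j_w}_t(\mathbf{c}^t)$. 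Hence, under the indicator, $c^t_{j_w}/b_{j_w}=h^{j_w}_t(\mathbf{c}^t)=\max_{r\le j_w}h^r_t(\mathbf{c}^t)\ge h^r_t(\mathbf{c}^t)$ for every $r\le j_w$.

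Next I would exploit the threshold defining equation \eqref{up2}. Subtracting it at indices $r$ and $r-1$ yields the increment identity $p(T-t+1)\int_{\mu^{r-1}_t(\mathbf{c}^t)}^{\mu^r_t(\mathbf{c}^t)}u\,dF(u)=c^t_r/b_r$, and the thresholds obey $\mu^1_t\le\cdots\le\mu^{j_w}_t$. Locating the index $l\le j_w$ with $\mu^{l-1}_t(\mathbf{c}^t)\le c^t_{j_w}/b_{j_w}<\mu^l_t(\mathbf{c}^t)$ (set $\mu^0_t=0$), I would telescope
\[
F(\mu^{j_w}_t(\mathbf{c}^t))-F(c^t_{j_w}/b_{j_w})=\big[F(\mu^l_t(\mathbf{c}^t))-F(c^t_{j_w}/b_{j_w})\big]+\sum_{r=l+1}^{j_w}\big[F(\mu^r_t(\mathbf{c}^t))-F(\mu^{r-1}_t(\mathbf{c}^t))\big]
\]
and apply Lemma \ref{assumplemma1} to each summand (legitimate since, in the regime $t<T-K$, all thresholds stay below $\bar\omega$). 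Each summand is then at most $\frac{M}{\mu^r_t(\mathbf{c}^t)}\cdot\frac{c^t_r/b_r}{p(T-t+1)}$: for $r=l$ I first enlarge its integration range from $[c^t_{j_w}/b_{j_w},\mu^l_t(\mathbf{c}^t)]$ to $[\mu^{l-1}_t(\mathbf{c}^t),\mu^l_t(\mathbf{c}^t)]$, which is valid because $c^t_{j_w}/b_{j_w}\ge\mu^{l-1}_t(\mathbf{c}^t)$, and then invoke the increment identity.

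The crux is to show that truncation is active at every contributing index $r\in\{l,\dots,j_w\}$, i.e. $c^t_r/b_r\le\mu^r_t(\mathbf{c}^t)$. Each such $r$ satisfies $\mu^r_t(\mathbf{c}^t)>c^t_{j_w}/b_{j_w}$; were truncation inactive ($c^t_r/b_r>\mu^r_t(\mathbf{c}^t)$) we would have $h^r_t(\mathbf{c}^t)=\mu^r_t(\mathbf{c}^t)>c^t_{j_w}/b_{j_w}$, contradicting that $c^t_{j_w}/b_{j_w}$ is the running maximum of the $h$'s established above. With $c^t_r/b_r\le\mu^r_t(\mathbf{c}^t)$ in hand, each summand is bounded by $\frac{M}{p(T-t+1)}$, and since there are at most $j_w$ summands ($r$ ranges over $\{l,\dots,j_w\}$, a set of cardinality $j_w-l+1\le j_w$) the telescoped sum is at most $\frac{M\,j_w}{p(T-t+1)}$, which is exactly \eqref{pfap12}.

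The main obstacle, and the reason the running-maximum (record) structure is indispensable, is that the seemingly simpler route of bounding the integral by the whole $\int_0^{\mu^{j_w}_t(\mathbf{c}^t)}u\,dF(u)=\sum_{k\le j_w}(c^t_k/b_k)/(p(T-t+1))$ breaks down: the sum $\sum_{k\le j_w}c^t_k/b_k$ need not be $\le j_w\,\mu^{j_w}_t(\mathbf{c}^t)$, because a coordinate $k$ whose truncation is inactive can carry capacity far exceeding $\mu^{j_w}_t(\mathbf{c}^t)$. The point of the decomposition above is precisely that such large, truncation-inactive coordinates satisfy $\mu^k_t(\mathbf{c}^t)\le c^t_{j_w}/b_{j_w}$ and therefore lie entirely below the lower integration limit $c^t_{j_w}/b_{j_w}$, so they never enter the estimate; only the truncation-active coordinates, each contributing at most $\frac{M}{p(T-t+1)}$, survive.
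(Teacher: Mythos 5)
Your proof is correct, but it takes a genuinely different route from the paper's. The paper proves \eqref{pfap12} by induction on $w$ along the subsequence: it peels off $[F(\mu_t^{j_w}(\mathbf{c}^t))-F(\mu_t^{j_{w-1}}(\mathbf{c}^t))]$, bounds that increment by $\tfrac{M(j_w-j_{w-1})}{p(T-t+1)}$ using Lemma \ref{assumplemma1} together with \eqref{pfap10} for the intermediate non-subsequence coordinates and the indicator trick for the coordinate $j_w$ itself, and absorbs the remainder $F(\mu_t^{j_{w-1}}(\mathbf{c}^t))-F(h_t^{j_{w-1}}(\mathbf{c}^t))$ into the inductive hypothesis. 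You instead argue directly: you locate the index $l$ with $\mu^{l-1}_t(\mathbf{c}^t)\le c^t_{j_w}/b_{j_w}<\mu^{l}_t(\mathbf{c}^t)$, telescope over the raw indices $r=l,\dots,j_w$ using the increment identity from \eqref{up2}, and supply the new structural observation that every such $r$ must be truncation-active ($c^t_r/b_r\le\mu^r_t(\mathbf{c}^t)$), which you derive from the running-maximum property of $h^{j_w}_t(\mathbf{c}^t)$ — an equivalent restatement of the subsequence definition that the paper exploits only in the weaker form \eqref{pfap10}. Each of your $j_w-l+1\le j_w$ summands is then worth at most $\tfrac{M}{p(T-t+1)}$, matching the paper's count by a different grouping. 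Your version avoids the induction entirely and makes transparent \emph{why} the count is $j_w$ (only truncation-active coordinates above the cutoff contribute); the paper's version needs no case analysis on where $c^t_{j_w}/b_{j_w}$ falls among the thresholds. The only cosmetic caveat is that Lemma \ref{assumplemma1} is stated for $0<w_1<w_2<\bar{\omega}$, so the degenerate endpoints ($w_1=0$, or coinciding thresholds when some remaining capacity vanishes) should be dispatched via \eqref{appendix1} or by noting the summand is zero — the same technicality the paper's base case glosses over.
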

\begin{proof}{Proof:}
we will prove \eqref{pfap12} by induction. For $j_1=1$, we have that
\[\begin{aligned}
[ F(\mu_t^{j_1}(\mathbf{c}^t))-F(h_t^{j_1}(\mathbf{c}^t)) ]\cdot 1_{\{ c^t_{j_1}/b_{j_w}<\mu_t^{j_1}(\mathbf{c}^t) \}}&\leq F(\mu_t^{j_1}(\mathbf{c}^t))\cdot 1_{\{ c^t_{j_1}/b_{j_1}<\mu_t^{j_1}(\mathbf{c}^t) \}}\\
&\leq \frac{M\cdot c^t_{j_1}/b_{j_1}}{\mu_t^{j_1}(\mathbf{c}^t)\cdot p\cdot(T-t+1)}\cdot 1_{\{ c^t_{j_1}/b_{j_1}<\mu_t^{j_1}(\mathbf{c}^t)\}}\\
&\leq \frac{M}{p\cdot(T-t+1)}
\end{aligned}\]
where the second inequality holds due to Lemma \ref{assumplemma1}. Now suppose that for $w-1$, we have
\[
[ F(\mu_t^{j_{w-1}}(\mathbf{c}^t))-F(h_t^{j_{w-1}}(\mathbf{c}^t)) ]\cdot 1_{\{ c^t_{j_{w-1}}/b_{j_{w-1}}<\mu_t^{j_{w-1}}(\mathbf{c}^t) \}}\leq \frac{M\cdot j_{w-1}}{p\cdot(T-t+1)}
\]
Then note that $h_t^{j_{w-1}}(\mathbf{c}^t)\leq h_t^{j_{w}}(\mathbf{c}^t)$, we have
\[\begin{aligned}
&[ F(\mu_t^{j_w}(\mathbf{c}^t))-F(h_t^{j_w}(\mathbf{c}^t)) ]\cdot 1_{\{ c^t_{j_w}/b_{j_w}<\mu_t^{j_w}(\mathbf{c}^t) \}}\\
&\leq [ F(\mu_t^{j_w}(\mathbf{c}^t))- F(\mu_t^{j_{w-1}}(\mathbf{c}^t))+ F(\mu_t^{j_{w-1}}(\mathbf{c}^t))- F(h_t^{j_{w-1}}(\mathbf{c}^t)) ]\cdot 1_{\{ c^t_{j_w}/b_{j_w}<\mu_t^{j_w}(\mathbf{c}^t) \}}\\
&\leq [ F(\mu_t^{j_w}(\mathbf{c}^t))- F(\mu_t^{j_{w-1}}(\mathbf{c}^t)) ]\cdot 1_{\{ c^t_{j_w}/b_{j_w}<\mu_t^{j_w}(\mathbf{c}^t) \}}+F(\mu_t^{j_{w-1}}(\mathbf{c}^t))- F(h_t^{j_{w-1}}(\mathbf{c}^t))\\
&=[ F(\mu_t^{j_w}(\mathbf{c}^t))- F(\mu_t^{j_{w-1}}(\mathbf{c}^t)) ]\cdot 1_{\{ c^t_{j_w}/b_{j_w}<\mu_t^{j_w}(\mathbf{c}^t) \}}+[ F(\mu_t^{j_{w-1}}(\mathbf{c}^t))-F(h_t^{j_{w-1}}(\mathbf{c}^t)) ]\cdot 1_{\{ c^t_{j_{w-1}}/b_{j_{w-1}}<\mu_t^{j_{w-1}}(\mathbf{c}^t) \}}\\
&\leq [ F(\mu_t^{j_w}(\mathbf{c}^t))- F(\mu_t^{j_{w-1}}(\mathbf{c}^t)) ]\cdot 1_{\{ c^t_{j_w}/b_{j_w}<\mu_t^{j_w}(\mathbf{c}^t) \}}+\frac{M\cdot j_{w-1}}{p\cdot(T-t+1)}
\end{aligned}
\]
Note that from Lemma \ref{assumplemma1}, it holds that
\[\begin{aligned}
&[ F(\mu_t^{j_w}(\mathbf{c}^t))- F(\mu_t^{j_{w-1}}(\mathbf{c}^t)) ]\cdot 1_{\{ c^t_{j_w}/b_{j_w}<\mu_t^{j_w}(\mathbf{c}^t) \}}\leq \frac{M\cdot(c^t_{j_{w-1}+1}/b_{j_{w-1}+1}+\dots+c^t_{j_w}/b_{j_w})}{\mu_t^{j_w}(\mathbf{c}^t)\cdot p\cdot(T-t+1)}\cdot 1_{\{ c^t_{j_w}/b_{j_w}<\mu_t^{j_w}(\mathbf{c}^t) \}}\\
&\leq \frac{M\cdot(c^t_{j_{w-1}+1}/b_{j_{w-1}+1}+\dots+c^t_{j_w-1}/b_{j_w-1})}{\mu_t^{j_w}(\mathbf{c}^t)\cdot p\cdot(T-t+1)}+\frac{M\cdot c^t_{j_w}/b_{j_w}}{\mu_t^{j_w}(\mathbf{c}^t)\cdot p\cdot(T-t+1)}\cdot 1_{\{ c^t_{j_w}/b_{j_w}<\mu_t^{j_w}(\mathbf{c}^t) \}}\\
&\leq \frac{M\cdot(c^t_{j_{w-1}+1}/b_{j_{w-1}+1}+\dots+c^t_{j_w-1}/b_{j_w-1})}{\mu_t^{j_w}(\mathbf{c}^t)\cdot p\cdot(T-t+1)}+\frac{M}{p\cdot(T-t+1)}\\
&\leq \frac{M\cdot (j_w-1-j_{w-1})}{p\cdot(T-t+1)}+\frac{M}{p\cdot(T-t+1)}=\frac{M\cdot(j_w-j_{w-1})}{p\cdot(T-t+1)}
\end{aligned}\]
where the last inequality holds by noting that according to \eqref{pfap10}, we must have
\[
\mu_t^{j_w}(\mathbf{c}^t)\geq\mu_t^{j_{w-1}}(\mathbf{c}^t)\geq\max\{ c^t_{j_{w-1}+1}/b_{j_{w-1}+1},\dots,c^t_{j_w-1}/b_{j_w-1}\}
\]
Thus, by induction, for any $1\leq w\leq W$, \eqref{pfap12} holds.
\Halmos
\end{proof}
~\\
Thus from \eqref{pfap8} and \eqref{pfap12}, we have that
\begin{equation}\label{pfap13}
F(\mu^j_t(\mathbf{c}^t))-F(h_t^{j_w}(\mathbf{c}^t))\leq \frac{M\cdot j}{p\cdot(T-t+1)}
\end{equation}
Then by substituting \eqref{pfap13} into \eqref{pfap9}, we can finally upper bound the term I by:
\begin{equation}\label{pfap14}
\text{I}\leq \frac{M}{T-t+1}\cdot\sum_{j=1}^{m}(\tr_j-\tr_{j+1})\cdot j=\frac{M}{T-t+1}\cdot\sum_{j=1}^{m}\tr_j
\end{equation}
We then proceed to bound the term II.\\
\textbf{Bound II}: From the definition of the threshold $\mu^j_t(\mathbf{c}^t)$ in \eqref{up2}, for each $j=1,2,\dots,m$, we have that
\[
\frac{c^t_1/b_1+\dots+c^t_j/b_j}{T-t}-\frac{c^t_1/b_1+\dots+c^t_j/b_j}{T-t+1}=p\cdot\int_{\mu_t^j(\mathbf{c}^t)}^{\mu_{t+1}^j(\mathbf{c}^t)}udF(u)\leq p\cdot\mu^j_{t+1}(\mathbf{c}^t)\cdot[F(\mu^j_{t+1}(\mathbf{c}^t))-F(\mu^j_t(\mathbf{c}^t))]
\]
which implies
\[
(T-t)\cdot p\cdot[F(\mu^j_{t}(\mathbf{c}^t))-F(\mu^j_{t+1}(\mathbf{c}^t))]\leq-\frac{c^t_1/b_1+\dots+c^t_j/b_j}{\mu^j_{t+1}(\mathbf{c}^t)\cdot(T-t+1)}
\]
Thus, we have the following upper bound regarding the term II:
\begin{equation}\label{pfap15}
\text{II}\leq-\sum_{j=1}^{m}(\tr_j-\tr_{j+1})\cdot \frac{c^t_1/b_1+\dots+c^t_j/b_j}{\mu^j_{t+1}(\mathbf{c}^t)\cdot(T-t+1)}
\end{equation}
At last, it remains to bound the term III.\\
\textbf{Bound III:} We begin with an observation that from the definition of the threshold $\mu^j_t(\mathbf{c}^t)$ in \eqref{up2}, if $k<j_w$, then for any $u$, we must have $\mu^k_{t+1}(\mathbf{c}^t)=\mu^k_{t+1}(\mathbf{c}^t-b_{j_w}u\cdot\mathbf{e}_{j_w})$. Thus, we could rewrite the term III as:
\begin{equation}\label{pfap16}
\text{III}=p\cdot\sum_{w=1}^{W}\int_{h_t^{j_{w-1}}(\mathbf{c}^t)}^{h_t^{j_w}(\mathbf{c}^t)}\{(T-t)\cdot p\cdot\sum_{k=j_w}^{m}(\tr_k-\tr_{k+1})\cdot( F(\mu^k_{t+1}(\mathbf{c}^t))-F(\mu^k_{t+1}(\mathbf{c}^t-b_{j_w}u\cdot\mathbf{e}_{j_w})) )\}dF(u)
\end{equation}
Note that from Lemma \ref{newlemma2}, for any $k\geq j_w$, we have that
\[
\begin{aligned}
p(T-t)\cdot(F(\mu^{k}_{t+1}(\mathbf{c}^t))-F(\mu^{k}_{t+1}(\mathbf{c}^t-b_{j_w}u\cdot\mathbf{e}_{j_w}))
\leq\frac{(M-1) u^2}{(c^t_{j_w}/ b_{j_w})\cdot\mu^{k}_{t+1}(\mathbf{c}^t)}+\frac{u}{ \mu^{k}_{t+1}(\mathbf{c}^t)}
\end{aligned}
\]
Then, we have that
\begin{equation}\label{pfap27}
\begin{aligned}
\text{III}&\leq \underbrace{  p(M-1)\cdot\sum_{w=1}^{W}\int_{h_t^{j_{w-1}}(\mathbf{c}^t)}^{h_t^{j_w}(\mathbf{c}^t)}\left\{
\sum_{k=j_w}^{m}(\tr_k-\tr_{k+1})\cdot\frac{u^2}{(c^t_{j_w}/b_{j_w})\cdot \mu^k_{t+1}(\mathbf{c}^t)}
\right\}dF(u)   }_{\text{IV}}\\
&+\underbrace{  p\cdot\sum_{w=1}^{W}\int_{h_t^{j_{w-1}}(\mathbf{c}^t)}^{h_t^{j_w}(\mathbf{c}^t)}\sum_{k=j_w}^{m}(\tr_k-\tr_{k+1})\cdot \frac{u}{\mu^k_{t+1}(\mathbf{c}^t)}dF(u) }_{\text{V}}
\end{aligned}
\end{equation}
We next bound the term IV and V separately. We have that
\begin{equation}\label{pfap28}
\begin{aligned}
\text{IV}&=p(M-1)\sum_{w=1}^{W}\sum_{k=j_w}^{m}\frac{r_k-r_{k+1}}{(c^t_{j_w}/b_{j_w})\cdot \mu^k_{t+1}(\mathbf{c}^t)}\cdot \int_{h_t^{j_{w-1}}(\mathbf{c}^t)}^{h_t^{j_w}(\mathbf{c}^t)}u^2dF(u)\\
&\leq p(M-1)\sum_{w=1}^{W}\sum_{k=j_w}^{m}\frac{r_k-r_{k+1}}{(c^t_{j_w}/b_{j_w})\cdot \mu^k_{t+1}(\mathbf{c}^t)}\cdot h_t^{j_w}(\mathbf{c}^t)\cdot \int_{h_t^{j_{w-1}}(\mathbf{c}^t)}^{h_t^{j_w}(\mathbf{c}^t)}udF(u)\\
&\leq p(M-1)\sum_{w=1}^{W}\sum_{k=j_w}^{m}\frac{r_k-r_{k+1}}{(c^t_{j_w}/b_{j_w}})\cdot \int_{h_t^{j_{w-1}}(\mathbf{c}^t)}^{h_t^{j_w}(\mathbf{c}^t)}udF(u)
\end{aligned}
\end{equation}
where the second inequality holds by noting that when $k\geq j_w$, we must have $h_t^{j_w}(\mathbf{c}^t)\leq\mu_t^{j_w}(\mathbf{c}^t)\leq\mu_t^k(\mathbf{c}^t)\leq\mu_{t+1}^k(\mathbf{c}^t)$. We then bound the term $\frac{1}{c^t_{j_w}/b_{j_w}}\cdot\int_{h_t^{j_{w-1}}(\mathbf{c}^t)}^{h_t^{j_w}(\mathbf{c}^t)}udF(u)$ in \eqref{pfap28}.
\begin{claim}
For any $1\leq w\leq W$, it holds that
\begin{equation}\label{pfap30}
\begin{aligned}
\frac{1}{c^t_{j_w}/b_{j_w}}\cdot\int_{h_t^{j_{w-1}}(\mathbf{c}^t)}^{h_t^{j_w}(\mathbf{c}^t)}udF(u)\leq \frac{j_w}{p(T-t+1)}
\end{aligned}
\end{equation}
\end{claim}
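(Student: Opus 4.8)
The plan is to rewrite \eqref{pfap30} in the equivalent form $I_w\le j_w\,(c^t_{j_w}/b_{j_w})$, where $I_w:=p(T-t+1)\int_{h_t^{j_{w-1}}}^{h_t^{j_w}}u\,dF(u)$ (throughout I suppress the argument $\mathbf{c}^t$); multiplying through by $p(T-t+1)(c^t_{j_w}/b_{j_w})$ makes the equivalence immediate. First I would split $\int_{h_t^{j_{w-1}}}^{h_t^{j_w}}=\int_0^{h_t^{j_w}}-\int_0^{h_t^{j_{w-1}}}$. Since $h_t^{j}=\min\{c^t_{j}/b_{j},\mu_t^{j}\}\le\mu_t^{j}$, the defining equation \eqref{up2} gives $p(T-t+1)\int_0^{h_t^{j_w}}u\,dF\le p(T-t+1)\int_0^{\mu_t^{j_w}}u\,dF=\sum_{k=1}^{j_w}c^t_k/b_k$. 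Introducing the truncation deficit $D_v:=p(T-t+1)\int_{h_t^{j_v}}^{\mu_t^{j_v}}u\,dF\ge0$, the lower integral equals $\sum_{k=1}^{j_{w-1}}c^t_k/b_k-D_{w-1}$, which yields the master inequality $I_w\le\sum_{k=j_{w-1}+1}^{j_w}c^t_k/b_k+D_{w-1}$.

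Next I would bound the increment sum by $(j_w-j_{w-1})(c^t_{j_w}/b_{j_w})$. For an intermediate index $j_{w-1}<k<j_w$, Claim \eqref{pfap10} (applied to the peak $j_{w-1}$) gives $c^t_k/b_k\le\mu_t^{j_{w-1}}\le\mu_t^{k}$, so $h_t^{k}=c^t_k/b_k$; the running-maximum property of the subsequence then forces $c^t_k/b_k=h_t^{k}<h_t^{j_{w-1}}\le h_t^{j_w}\le c^t_{j_w}/b_{j_w}$. Adding the terminal term $k=j_w$, which equals $c^t_{j_w}/b_{j_w}$, gives $\sum_{k=j_{w-1}+1}^{j_w}c^t_k/b_k\le(j_w-j_{w-1})(c^t_{j_w}/b_{j_w})$.

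It then remains to control $D_{w-1}$, which I would do by proving $D_v\le j_v\,(c^t_{j_v}/b_{j_v})$ for all $v$ by induction. Using $\int_0^{h_t^{j_v}}\ge\int_0^{h_t^{j_{v-1}}}$ one gets $D_v\le D_{v-1}+p(T-t+1)\int_{\mu_t^{j_{v-1}}}^{\mu_t^{j_v}}u\,dF=D_{v-1}+\sum_{k=j_{v-1}+1}^{j_v}c^t_k/b_k$, and the increment bound of the previous paragraph reduces this to $D_v\le D_{v-1}+(j_v-j_{v-1})(c^t_{j_v}/b_{j_v})$. The key observation is that $D_{v-1}>0$ forces peak $j_{v-1}$ to be truncated, whence $c^t_{j_{v-1}}/b_{j_{v-1}}=h_t^{j_{v-1}}\le h_t^{j_v}\le c^t_{j_v}/b_{j_v}$; combined with the inductive hypothesis $D_{v-1}\le j_{v-1}(c^t_{j_{v-1}}/b_{j_{v-1}})$ this gives $D_{v-1}\le j_{v-1}(c^t_{j_v}/b_{j_v})$ and closes the induction (the base case $v=1$ being $D_1\le c^t_1/b_1$, immediate from the definition). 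Feeding $D_{w-1}\le j_{w-1}(c^t_{j_w}/b_{j_w})$ into the master inequality gives $I_w\le(j_w-j_{w-1}+j_{w-1})(c^t_{j_w}/b_{j_w})=j_w(c^t_{j_w}/b_{j_w})$, which is the claim.

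The hard part is that the truncated capacities $c^t_{j_v}/b_{j_v}$ are not monotone along the peaks, so the naive estimate $\sum_{k\le j_w}c^t_k/b_k\le j_w(c^t_{j_w}/b_{j_w})$ is false — a non-truncated earlier peak can hold much more capacity than $c^t_{j_w}/b_{j_w}$. The deficit $D_{w-1}$ is exactly the mass that truncation removes before the $w$th peak, and the induction shows it is dominated by $j_{w-1}(c^t_{j_w}/b_{j_w})$, so it can be absorbed into the target bound; this bookkeeping is what replaces the false pointwise comparison.
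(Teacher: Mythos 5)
Your proof is correct, but it takes a genuinely different route from the paper's. The paper avoids induction entirely: it defines $j(w):=\max\{j:0\leq j<j_w,\ c^t_j/b_j\geq\mu_t^j(\mathbf{c}^t)\}$, the last \emph{non-truncated} index below $j_w$, extends the lower limit of integration from $h_t^{j_{w-1}}(\mathbf{c}^t)$ down to $h_t^{j(w)}(\mathbf{c}^t)=\mu_t^{j(w)}(\mathbf{c}^t)$, and then evaluates the enlarged integral exactly via \eqref{up2} as $\bigl(c^t_{j(w)+1}/b_{j(w)+1}+\dots+c^t_{j_w}/b_{j_w}\bigr)/\bigl(p(T-t+1)\bigr)$; since every index strictly between $j(w)$ and $j_w$ is truncated, each summand is at most $c^t_{j_w}/b_{j_w}$, giving the bound $(j_w-j(w))/(p(T-t+1))$, which is in fact slightly sharper than $j_w/(p(T-t+1))$. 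Your telescoping-deficit argument reaches the same conclusion by propagating the invariant $D_{v-1}\leq j_{v-1}\,(c^t_{j_v}/b_{j_v})$ through all preceding peaks; the paper's choice of $j(w)$ is exactly the point where your deficit vanishes, so the two proofs exploit the same underlying fact (a truncated index $q$ satisfies $c^t_q/b_q=h_t^q(\mathbf{c}^t)\leq h_t^{j_w}(\mathbf{c}^t)\leq c^t_{j_w}/b_{j_w}$), but the paper packages it as a single one-shot estimate whereas you pay for the same insight with an induction and the extra bookkeeping of $D_v$. Both arguments are sound in the context of the lemma (where $t<T-K$ guarantees the thresholds are finite, so the identities defining $D_v$ and the exact evaluation of $\int_0^{\mu}u\,dF$ are legitimate).
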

\begin{proof}{Proof:}
For each $j_w$, we define the index $j(w)$:
\begin{equation}\label{pfap29}
j(w):=\max\{ j: 0\leq j<j_w\text{~and~}c_{j}/b_j\geq\mu_t^{j}(\mathbf{c}^t) \}
\end{equation}
where we will denote $c_0=\mu_t^{0}(\mathbf{c}^t)=0$ and such a $j(w)$ always exist. Since $j(w)<j_w$, from the definition of $\{j_1,j_2,\dots,j_W\}$, it must hold that
\[
\mu_t^{j(w)}(\mathbf{c}^t)=h_t^{j(w)}(\mathbf{c}^t)\leq h_t^{j_{w-1}}(\mathbf{c}^t)
\]
then we have
\[\begin{aligned}
\frac{1}{c^t_{j_w}/b_{j_w}}\cdot\int_{h_t^{j_{w-1}}(\mathbf{c}^t)}^{h_t^{j_w}(\mathbf{c}^t)}udF(u)&\leq \frac{1}{c^t_{j_w}/b_{j_w}}\cdot\int_{h_t^{j(w)}(\mathbf{c}^t)}^{h_t^{j_w}(\mathbf{c}^t)}udF(u)
\leq\frac{1}{c^t_{j_w}/b_{j_w}}\cdot\int_{\mu_t^{j(w)}(\mathbf{c}^t)}^{\mu_t^{j_w}(\mathbf{c}^t)}udF(u)\\
&=\frac{1}{c^t_{j_w}/b_{j_w}}\cdot\frac{c^t_{j(w)+1}/b_{j(w)+1}+\dots+c^t_{j_w}/b_{j_w}}{p(T-t+1)}
\end{aligned}\]
where the last equality holds due to the definition of $\mu_t^{j(w)}(\mathbf{c}^t)$ and $\mu_t^{j_w}(\mathbf{c}^t)$ in \eqref{up22}. Moreover, note that from the definition of the index $j(w)$, for any index $q$ such that $j(w)<q<j_w$, it must hold $c^t_q/b_q<\mu^q_t(\mathbf{c}^t)$. Thus, we have that $c^t_q/b_q=h^q_t(\mathbf{c}^t)\leq h_t^{j_w}(\mathbf{c}^t)\leq c^t_{j_w}/b_{j_w}$, which implies that
\[
\frac{c^t_{j(w)+1}/b_{j(w)+1}+\dots+c^t_{j_w}/b_{j_w}}{c^t_{j_w}/b_{j_w}}\leq j_w-j(w)\leq j_w
\]
Thus, the proof is completed.
\Halmos
\end{proof}
By substituting \eqref{pfap30} into \eqref{pfap28}, we have that
\begin{equation}\label{pfap31}
\text{IV}\leq \frac{M-1}{T-t+1}\cdot\sum_{w=1}^{W}\sum_{k=j_w}^{m}(\tr_k-\tr_{k+1})j_w=\frac{M-1}{T-t+1}\cdot\sum_{w=1}^{W}\tr_{j_w}\cdot j_w\leq\frac{M-1}{T-t+1}\cdot\sum_{j=1}^{m}j\cdot \tr_j
\end{equation}
At last, we bound the term V as follows:
\begin{equation}\label{pfap32}
\begin{aligned}
\text{V}&=p\cdot\sum_{w=1}^{W}\int_{h_t^{j_{w-1}}(\mathbf{c}^t)}^{h_t^{j_w}(\mathbf{c}^t)}\sum_{k=j_w}^{m}(\tr_k-\tr_{k+1})\cdot \frac{u}{\mu^k_{t+1}(\mathbf{c}^t)}dF(u)
=p\cdot\sum_{k=1}^{m}\frac{\tr_k-\tr_{k+1}}{\mu^k_{t+1}(\mathbf{c}^t)}\cdot\sum_{j_w:j_w\leq k}\int_{h_t^{j_{w-1}}(\mathbf{c}^t)}^{h_t^{j_w}(\mathbf{c}^t)}udF(u)\\
&\leq p\cdot\sum_{k=1}^{m}\frac{\tr_k-\tr_{k+1}}{\mu^k_{t+1}(\mathbf{c}^t)}\cdot\int_{0}^{\mu^k_t(\mathbf{c}^t)}udF(u)=\sum_{k=1}^{m}(\tr_k-\tr_{k+1})\cdot\frac{c^t_1/b_1+\dots+c^t_k/b_k}{\mu^k_{t+1}(\mathbf{c}^t)\cdot(T-t+1)}
\end{aligned}
\end{equation}
where the first inequality holds by noting that for all $j_w\leq k$, we have $h_t^{j_w}(\mathbf{c}^t)\leq\mu_t^{j_w}(\mathbf{c}^t)\leq\mu_t^k(\mathbf{c}^t)$. Finally, we can bound term III by:
\begin{equation}\label{pfap33}
\text{III}\leq\text{IV}+\text{V}\leq \frac{M-1}{T-t+1}\cdot\sum_{j=1}^{m}j\cdot \tr_j+\sum_{k=1}^{m}(\tr_k-\tr_{k+1})\cdot\frac{c^t_1/b_1+\dots+c^t_k/b_k}{\mu^k_{t+1}(\mathbf{c}^t)\cdot(T-t+1)}
\end{equation}
By combining \eqref{pfap14}, \eqref{pfap15} and \eqref{pfap33}, we have
\begin{equation}\label{pfap34}
\begin{aligned}
\rho_t(\mathbf{c}^t)-\bar{\rho}_{t+1}&\leq \frac{M}{T-t+1}\cdot\sum_{j=1}^{m}\tr_j-\sum_{j=1}^{m}(\tr_j-\tr_{j+1})\cdot \frac{c^t_1/b_1+\dots+c^t_j/b_j}{\mu^j_{t+1}(\mathbf{c}^t)\cdot(T-t+1)}\\
&+\frac{M-1}{T-t+1}\cdot\sum_{j=1}^{m}j\cdot \tr_j+\sum_{k=1}^{m}(\tr_k-\tr_{k+1})\cdot\frac{c^t_1/b_1+\dots+c^t_k/b_k}{\mu^k_{t+1}(\mathbf{c}^t)\cdot(T-t+1)}\\
&=\frac{1}{T-t+1}\cdot \sum_{j=1}^{m}\tr_j\cdot(M(j+1)-j)
\end{aligned}
\end{equation}
holds for any $\mathbf{c}^t$, which implies that
\[
\bar{\rho}_t-\bar{\rho}_{t+1}\leq \frac{1}{T-t+1}\cdot \sum_{j=1}^{m}\tr_j\cdot(M(j+1)-j)
\]
Thus, by noting that $\bar{\rho}_{T-K+1}\leq r_{\max}\cdot K$, we have
\[\begin{aligned}
\rho_1(\mathbf{c}^*)&\leq\bar{\rho}_1\leq\sum_{t=1}^{T-K}(\bar{\rho}_t-\bar{\rho}_{t+1})+\bar{\rho}_{T-K+1}\leq \sum_{t=1}^{T-K}\left(\frac{1}{T-t+1}\right)\cdot \sum_{j=1}^{m}\left(r_j\cdot(M(j+1)-j)\right)+r_{\max}\cdot K\\
&\leq (\log T+1)\cdot\sum_{j=1}^{m}\left(\tr_j\cdot(M(j+1)-j)\right)+r_{\max}\cdot K
\end{aligned}\]
which completes our proof.

\section{Proof of Theorem \ref{Main2}}
Since we have shown
\[
\mathbb{E}[V^{\text{off}}(\mathbf{I})]\geq\mathbb{E}[V^{\text{ATP}_1}(\mathbf{I})]\geq\sum_{i=1}^{n}G_i(\mathbf{c}^*_i,1)-O(\log T)
\]
it is enough to show that $\sum_{i=1}^{n}G_i(\mathbf{c}^*_i,1)=\Omega(T^{\frac{1}{1+\alpha}})$. Also, there must exists a $i$ and $i_j$ such that $p_i>0$, $r_{ii_j}-r_{ii_{j+1}}>0$ and $\sum_{k=1}^{j}c^*_{ii_k}\geq\frac{\sum_{j=1}^{m}c_j}{n}$. If we can show that
\[
T\cdot p_i\cdot (r_{ii_j}-r_{ii_{j+1}})\cdot F_i(\mu^{ii_j}_1(\mathbf{c}^*_i))=\Omega(T^{\frac{1}{1+\alpha}})
\]
then the proof is completed immediately. In what follows, we will use $\mu_T$ to denote the threshold $\mu^{ii_j}_1(\mathbf{c}^*_i)$ and denote $l$ as the positive integer such that $\lambda^l\cdot\bar{\omega}\leq\mu_{T}\leq\lambda^{l-1}\cdot\bar{\omega}$. Then we have
\[
\frac{\sum_{k=1}^{j}c^*_{ii_k}}{\bar{b}\cdot p_i\cdot T}\geq\frac{\sum_{k=1}^{j}c^*_{ii_k}/b_{ii_k}}{p_i\cdot T}=\int_{0}^{\mu_{T}}ud F_i(u)\geq\sum_{k=l}^{+\infty}\int_{\lambda^{k+1}\bar{\omega}}^{\lambda^k\bar{\omega}}ud F_i(u)
\]
where $\bar{b}=\min_{1\leq k\leq j}\{b_{ii_k}\}$. We also have that
\[
\int_{\lambda^{k+1}\bar{\omega}}^{\lambda^k\bar{\omega}}udF_i(u)\geq \lambda^{k+1}\bar{\omega}\cdot[F_i(\lambda^k\bar{\omega})-F_i(\lambda^{k+1}\bar{\omega})]\geq\lambda^{k+1}\bar{\omega}(\gamma_1-1)\cdot F_i(\lambda^{k+1}\bar{\omega})
\geq(\frac{\lambda}{\gamma_2})^{k+1}\bar{\omega}(\gamma_1-1)F_i(\bar{\omega})
\]
The last inequality holds by noting $F_i(\bar{\omega})\leq \gamma_2^{k+1}\cdot F_i(\lambda^{k+1}\bar{\omega})$. Thus, we have that
\[
\frac{\sum_{k=1}^{j}c^*_{ii_k}}{\bar{b}\cdot p_i\cdot T}\geq \bar{\omega}(\gamma_1-1)F_i(\bar{\omega})\cdot \sum_{k=l}^{+\infty}(\frac{\lambda}{\gamma_2})^{k+1}=\bar{\omega}(\gamma_1-1)F_i(\bar{\omega})(\frac{\lambda}{\gamma_2})^{l+1}\cdot\frac{\gamma_2}{\gamma_2-\lambda}
\]
Define a constant $Q=\frac{(\sum_{k=1}^{j}c^*_{ii_k})\cdot(\gamma_2-\lambda)}{\bar{b}\bar{\omega}(\gamma_1-1)\lambda p_i F_i(\bar{\omega})}$, we have that
\[
(\frac{\lambda}{\gamma_2})^l\leq \frac{Q}{T}
\]
Also, since $0<\lambda<1$ and $0<\frac{1}{\gamma_2}<1$, there exists a constant $\alpha>0$ such that $\frac{1}{\gamma_2}=\lambda^\alpha$. Thus, we have
\[
\lambda^{(1+\alpha)l}\leq\frac{Q}{T}\Rightarrow \lambda^l\leq (\frac{Q}{T})^{\frac{1}{1+\alpha}}
\]
Note that from \eqref{appendix1}, we have that
\[
F_i(\mu_{T})\geq\frac{M_1}{\mu_{T}}\cdot\int_{0}^{\mu_{T}}ud F_i(u)=\frac{M_1\cdot (\sum_{k=1}^{j}c^*_{ii_k}/b_{ii_k})}{\mu_{T}\cdot p_i\cdot T}\geq\frac{M_1\lambda(\sum_{k=1}^{j}c^*_{ii_k})}{\hat{b}\cdot\bar{\omega}\cdot p_i\cdot T}\cdot\frac{1}{\lambda^l}
\]
where $\hat{b}=\max_{1\leq k\leq j}\{b_{ii_k}\}$. Thus, we have that
\[\begin{aligned}
T\cdot p_i\cdot (r_{ii_j}-r_{ii_{j+1}})\cdot F_i(\mu_{T})&\geq\frac{M_1\lambda(r_{ii_j}-r_{ii_{j+1}})(\sum_{k=1}^{j}c^*_{ii_k}) }{\hat{b}\cdot\bar{\omega}}\cdot\frac{1}{\lambda^l}\\
&\geq\frac{M_1\lambda^{\frac{2+\alpha}{1+\alpha}}(r_{ii_j}-r_{ii_{j+1}})((\gamma_1-1)\bar{b}p_iF_i(\bar{\omega}))^{\frac{1}{1+\alpha}}}{\hat{b}\bar{\omega}^{\frac{\alpha}{1+\alpha}}(\gamma_2-\lambda)^{\frac{1}{1+\alpha}}}
\cdot(\sum_{k=1}^{j}c^*_{ii_k})^{\frac{\alpha}{1+\alpha}}\cdot T^{\frac{1}{1+\alpha}}\\
&\geq\frac{M_1\lambda^{\frac{2+\alpha}{1+\alpha}}(r_{ii_j}-r_{ii_{j+1}})((\gamma_1-1)\bar{b}p_iF_i(\bar{\omega}))^{\frac{1}{1+\alpha}}(\sum_{j=1}^{m}c_j)^{\frac{\alpha}{1+\alpha}}}{\hat{b}\bar{\omega}^{\frac{\alpha}{1+\alpha}}(\gamma_2-\lambda)^{\frac{1}{1+\alpha}}n^{\frac{\alpha}{1+\alpha}}}
\cdot T^{\frac{1}{1+\alpha}}\\
&=\Omega(T^{\frac{1}{1+\alpha}})
\end{aligned}\]
which completes our proof.

\section{Proof of Theorem \ref{TRMain2}}
We first prove the following lemma regarding the threshold $\mu_t(\mathbf{c}^t)$.
\begin{lemma}\label{aplemma2}
For any $j=1,2,\dots,m$, the function $\frac{1}{\mu_t(\mathbf{c}^t)}$ is a convex function over $c^t_j$.
\end{lemma}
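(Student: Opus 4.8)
The plan is to mirror the proof of Lemma \ref{newlemma1}, but to work directly with the defining equation \eqref{TR3} rather than reducing the problem into $n$ sub-LPs. Write $\mu = \mu_t(\mathbf{c}^t)$ and set $\Phi(\mu) := \sum_{i=1}^{n} p_i \int_{0}^{r_i \mu} w \, dF_i(w)$, so that the threshold solves $\Phi(\mu) = \frac{\sum_{j=1}^{m} c^t_j}{T-t+1}$. Since each $F_i$ has a continuous density $f_i$, the map $\Phi$ is continuously differentiable and strictly increasing wherever the densities are positive, so the implicit function theorem guarantees that $\mu$ is differentiable in each $c^t_j$ and, because the right-hand side is increasing in $c^t_j$, that $\mu$ is non-decreasing in $c^t_j$.

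First I would differentiate \eqref{TR3} with respect to $c^t_j$. Using $\Phi'(\mu) = \sum_{i=1}^n p_i r_i^2 \mu f_i(r_i \mu)$ and the fact that the right-hand side of \eqref{TR3} has derivative $\frac{1}{T-t+1}$ in $c^t_j$, this yields
\[
\frac{\partial \mu}{\partial c^t_j} = \frac{1}{(T-t+1)\,\sum_{i=1}^n p_i r_i^2 \mu f_i(r_i \mu)},
\]
and hence
\[
\frac{\partial}{\partial c^t_j}\left(\frac{1}{\mu}\right) = -\frac{1}{\mu^2}\,\frac{\partial \mu}{\partial c^t_j} = -\frac{1}{(T-t+1)\,\sum_{i=1}^n p_i r_i^2 \mu^3 f_i(r_i \mu)}.
\]
To prove convexity it suffices to show this derivative is non-decreasing in $c^t_j$, i.e. that the denominator $\sum_{i=1}^n p_i r_i^2 \mu^3 f_i(r_i \mu)$ is non-decreasing in $c^t_j$.

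The key step is a change of variable that recasts each summand in terms of the monotone quantity in Condition $2$ of Assumption \ref{assumption1}. Setting $w = r_i \mu$ gives $r_i^2 \mu^3 f_i(r_i \mu) = \frac{1}{r_i}(r_i\mu)^3 f_i(r_i\mu)$, so
\[
\sum_{i=1}^n p_i r_i^2 \mu^3 f_i(r_i \mu) = \sum_{i=1}^n \frac{p_i}{r_i}\,(r_i\mu)^3 f_i(r_i\mu).
\]
By Condition $2$, each map $w \mapsto w^3 f_i(w)$ is non-decreasing on $(0,\bar{\omega})$, so each term $(r_i\mu)^3 f_i(r_i\mu)$ is non-decreasing in $\mu$; since the weights $p_i/r_i$ are positive, the entire sum is non-decreasing in $\mu$. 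Combining this with the monotonicity of $\mu$ in $c^t_j$, the denominator is non-decreasing in $c^t_j$, whence $\frac{\partial}{\partial c^t_j}(1/\mu)$ is non-decreasing in $c^t_j$, giving convexity of $1/\mu$ in $c^t_j$.

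The main obstacle I anticipate is the same regularity caveat that arises in the proof of Lemma \ref{Main1}: Condition $2$ is only assumed on $(0,\bar{\omega})$, so the argument is valid only while $r_i\mu_t(\mathbf{c}^t) < \bar{\omega}$ for every $i$. As there, I would restrict attention to time periods and remaining capacities for which all the relevant thresholds lie below $\bar{\omega}$ (the complementary case costs only a constant regret, absorbed into $\hat{K}$), so that the monotonicity of $w^3 f_i(w)$ can be invoked. A secondary point to verify is that the several distributions $F_i$ combine correctly: because $c^t_j$ enters \eqref{TR3} only through the aggregate $\sum_j c^t_j$ and each summand is individually monotone with a positive weight, the sum inherits monotonicity with no need to split the problem into $n$ sub-problems — which is precisely what allows the resulting regret bound in Theorem \ref{TRMain2} to avoid any dependence on $n$.
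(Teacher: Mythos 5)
Your proposal is correct and follows essentially the same route as the paper's proof: implicit differentiation of \eqref{TR3}, the computation $\frac{\partial}{\partial c^t_j}(1/\mu) = -\bigl((T-t+1)\sum_{i} \frac{p_i}{r_i}(r_i\mu)^3 f_i(r_i\mu)\bigr)^{-1}$, and then monotonicity of $w\mapsto w^3 f_i(w)$ combined with monotonicity of $\mu$ in $c^t_j$. The only addition is your explicit caveat about restricting to thresholds below $\bar{\omega}$, which the paper handles at the level of Theorem \ref{TRMain2} rather than inside the lemma.
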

\begin{proof}{Proof:}
Since for each $i$, $F_i(\cdot)$ has a continuous density function $f_i(\cdot)$, then implicit function theorem implies that the threshold $\mu_{t}(\mathbf{c}^t)$ is differentiable over $c^t_{j}$ for each $j$ and the derivatives can be obtained by taking derivative over both sides of \eqref{TR3}. We have that
\[
\frac{\partial \mu_{t}(\mathbf{c}^t)}{\partial c^t_{j}}=\frac{1}{\sum_{i=1}^{n}p_i\cdot(T-t+1)\cdot r_i^2\cdot\mu^j_{t}(\mathbf{c}^t)\cdot f_i(r_i\mu^j_{t}(\mathbf{c}^t))}
\]
Thus, we have the function $\frac{1}{\mu_t(\mathbf{c}^t)}$ is differentiable over $c^t_j$. Note that
\[
\frac{\partial}{\partial c^t_j}(\frac{1}{\mu_t(\mathbf{c}^t)})=-\frac{1}{(\mu_t(\mathbf{c}^t))^2}\cdot \frac{\partial \mu_{t}(\mathbf{c}^t)}{\partial c^t_{j}}=-\frac{1}{\sum_{i=1}^{n}\frac{p_i(T-t+1)}{r_i}\cdot(r_i\mu_t(\mathbf{c}^t))^3\cdot f_i(r_i\mu_t(\mathbf{c}^t))}
\]
From Condition $2$ in Assumption \ref{assumption1}, for each $i$, we have that $(r_i\mu_t(\mathbf{c}^t))^3\cdot f_i(r_i\mu_t(\mathbf{c}^t))$ is a non-decreasing function over $r_i\mu_t(\mathbf{c}^t)$. Also, obviously $r_i\mu_t(\mathbf{c}^t)$ is non-decreasing over $c^t_j$, thus we have $(r_i\mu_t(\mathbf{c}^t))^3\cdot f_i(r_i\mu_t(\mathbf{c}^t))$ is a non-decreasing function over $c^t_j$. Finally, we have that $\frac{\partial}{\partial c^t_j}(\frac{1}{\mu_t(\mathbf{c}^t)})$ is a non-decreasing function over $c^t_j$, which implies the function $\frac{1}{\mu_t(\mathbf{c}^t)}$ is a convex function over $c^t_j$.
\Halmos
\end{proof}
~\\
To derive the regret bound, we study the gap between the expected total reward collected by our policy and the prophet upper bound in Theorem \ref{TRmain1}. Specifically, define
\begin{equation}\label{TRlm1}
\rho_t(\mathbf{c}^t)=(T-t+1)\cdot\sum_{i=1}^{n}p_i\cdot r_i\cdot F_i(r_i\mu_t(\mathbf{c}^t))-V_t(\mathbf{c}^t)
\end{equation}
where $V_t(\mathbf{c}^t)$ is the "to-go" expected total reward collected by the our policy, i.e., the expected cumulative reward collected by our policy from time period $t$ to the last time period $T$ given the remaining capacity at time period $t$ is denoted as $\mathbf{c}^t$. Then we have that
\[
\text{Regret}(\text{ATP}_2)\leq\rho_1(\mathbf{c})
\]
Since $j_t$ is denoted as the resource with largest remaining capacity given the remaining capacity is $\mathbf{c}^t$, i.e., $c^t_{j_t}=\max\{ c^t_1,c^t_2,\dots,c^t_m \}$, and our policy will assign resource $j_t$ to serve request $t$, revealed as $(r_i, u_i)$, as long as $u_i\leq h_t^i(\mathbf{c}^t)=\min\{ c^t_{j_t},r_i\mu_t(\mathbf{c}^t) \}$, we have the following recursion of $V_t(\mathbf{c}^t)$:
\[\begin{aligned}
&V_t(\mathbf{c}^t)=(1-\sum_{i=1}^{n}p_i)V_{t+1}(\mathbf{c}^t)+\sum_{i=1}^{n}p_i\cdot\int_{0}^{h_t^i(\mathbf{c}^t)}\{ r_i+V_{t+1}(\mathbf{c}^t-w_i\cdot\mathbf{e}_{j_t}) \}dF_i(w_i)+\sum_{i=1}^{n}p_i(1-F_i(h_t^i(\mathbf{c}^t)))V_{t+1}(\mathbf{c}^t)\\
&\Rightarrow\\
&V_t(\mathbf{c}^t)=(1-\sum_{i=1}^{n}p_iF_i(h_t^i(\mathbf{c}^t)))V_{t+1}(\mathbf{c}^t)+\sum_{i=1}^{n}p_i\cdot\int_{0}^{h_t^i(\mathbf{c}^t)}\{ r_i+V_{t+1}(\mathbf{c}^t-w_i\cdot\mathbf{e}_{j_t}) \}dF_i(w_i)
\end{aligned}
\]
where $\mathbf{e}_{j_t}$ denotes a $m$ dimensional vector with $1$ at $j_t$th component and $0$ at other components. Then, by substituting \eqref{TRlm1} into the above equation, we get the recursion of $\rho_t(\mathbf{c}^t)$.
\[\begin{aligned}
&(T-t+1)\cdot\sum_{i=1}^{n}p_ir_iF_i(r_i\mu_t(\mathbf{c}^t))-\rho_t(\mathbf{c}^t)=(1-\sum_{i=1}^{n}p_iF_i(h^i_t(\mathbf{c}^t)))\cdot[ (T-t)\sum_{i=1}^{n}p_ir_iF_i(r_i\mu_{t+1}(\mathbf{c}^t))-\rho_{t+1}(\mathbf{c}^t) ]\\
&+\sum_{i=1}^{m}p_i\cdot\int_{0}^{h^i_t(\mathbf{c}^t)}\{ r_i+(T-t)\sum_{k=1}^{n}p_kr_kF_k(r_k\mu_{t+1}(\mathbf{c}^t-w_i\cdot\mathbf{e}_{j_t}))-\rho_{t+1}(\mathbf{c}^t-w_i\cdot\mathbf{e}_{j_t}) \}dF_i(w_i)
\end{aligned}\]
which implies that
\[\begin{aligned}
&\rho_t(\mathbf{c}^t)=(1-\sum_{i=1}^{n}p_iF_i(h^i_t(\mathbf{c}^t)))\cdot\rho_{t+1}(\mathbf{c}^t)+\sum_{i=1}^{n}p_i\cdot\int_{0}^{h^i_t(\mathbf{c}^t)}\rho_{t+1}(\mathbf{c}^t-w_i\cdot\mathbf{e}_{j_t})dF_i(w_i)\\
&+\underbrace{ \sum_{i=1}^{n}p_ir_i\cdot(F_i(r_i\mu_t(\mathbf{c}^t))-F_i(h_t^i(\mathbf{c}^t))  ) }_{\text{I}}+\underbrace{ (T-t)\cdot\sum_{i=1}^{n}p_ir_i\cdot(F_i(r_i\mu_t(\mathbf{c}^t))-F_i(r_i\mu_{t+1}(\mathbf{c}^t))  )  }_{\text{II}}\\
&+\underbrace{\sum_{i=1}^{n}p_i\cdot\int_{0}^{h_t^i(\mathbf{c}^t)}\{ (T-t)\cdot\sum_{k=1}^{n}p_kr_k\cdot(F_k(r_k\mu_{t+1}(\mathbf{c}^t))-F_k(r_k\mu_{t+1}(\mathbf{c}^t-w_i\cdot\mathbf{e}_{j_t}))  )  \}dF_i(w_i)    }_{\text{III}}
\end{aligned}\]
By denoting $\bar{\rho}_{t+1}$ as the maximal value of $\rho_{t+1}(\mathbf{c}^{t+1})$ for all the possible remaining capacity $\mathbf{c}^{t+1}$, we have that
\[
\rho_t(\mathbf{c}^t)-\bar{\rho}_{t+1}\leq\text{I}+\text{II}+\text{III}
\]
In the following, we will bound the term I, II, III separately.\\
\textbf{Bound I:} From Lemma \ref{assumplemma1}, we have that
\[
\frac{r_i\mu_t(\mathbf{c}^t)F_i(r_i\mu_t(\mathbf{c}^t))}{\int_{0}^{r_i\mu_t(\mathbf{c}^t)}w_idF_i(w_i)}\leq M~~~\forall i~~\Rightarrow
\frac{\sum_{i=1}^{n}p_ir_i\mu_t(\mathbf{c}^t)F_i(r_i\mu_t(\mathbf{c}^t))}{\sum_{i=1}^{n}p_i\cdot\int_{0}^{r_i\mu_t(\mathbf{c}^t)}w_idF_i(w_i)}\leq M
\]
Also from the definition of $\mu_t(\mathbf{c}^t)$ in \eqref{TR3}, we have that
\[
\sum_{i=1}^{n}p_ir_i\mu_t(\mathbf{c}^t)F_i(r_i\mu_t(\mathbf{c}^t))\leq\frac{M\cdot(c^t_1+\dots+c^t_m)}{T-t+1}
\]
Thus, note that $h_t^i(\mathbf{c}^t)=\min\{c^t_{j_t},r_i\cdot\mu_t(\mathbf{c}^t)\}$, we have the following bound over the term I:
\[\begin{aligned}
\text{I}&=\sum_{i=1}^{n}p_ir_i\cdot(F_i(r_i\mu_t(\mathbf{c}^t))-F_i(h_t^i(\mathbf{c}^t))  )\leq \sum_{i=1}^{n}p_ir_iF_i(r_i\mu_t(\mathbf{c}^t))\cdot 1_{\{ r_i\cdot\mu_t(\mathbf{c}^t)>c^t_{j_t} \}  }\\
&\leq \sum_{i=1}^{n}p_ir_iF_i(r_i\mu_t(\mathbf{c}^t))\cdot 1_{\{ r_{\max}\cdot\mu_t(\mathbf{c}^t)>c^t_{j_t} \}  }=1_{\{ r_{\max}\cdot\mu_t(\mathbf{c}^t)>c^t_{j_t} \}  }\cdot\sum_{i=1}^{n}p_ir_iF_i(r_i\mu_t(\mathbf{c}^t))\\
&\leq 1_{\{ r_{\max}\cdot\mu_t(\mathbf{c}^t)>c^t_{j_t} \}  }\cdot \frac{M\cdot(c^t_1+\dots+c^t_m)}{\mu_t(\mathbf{c}^t)\cdot(T-t+1)}\leq\frac{M\cdot(c^t_1+\dots+c^t_m)\cdot r_{\max}}{c^t_{j_t}\cdot(T-t+1)}\\
&\leq \frac{M\cdot r_{\max}\cdot m}{T-t+1}
\end{aligned}\]
where $r_{\max}=\{ r_1,r_2,\dots,r_n \}$. In the next, we bound the term II.\\
\textbf{Bound II:} From the definition of $\mu_t(\mathbf{c}^t)$ in \eqref{TR3}, we have
\[
\frac{c^t_1+\dots+c^t_m}{T-t}-\frac{c^t_1+\dots+c^t_m}{T-t+1}=\sum_{i=1}^{n}p_i\cdot\int_{r_i\mu_t(\mathbf{c}^t)}^{r_i\mu_{t+1}(\mathbf{c}^t)}w_idF_i(w_i)\leq\sum_{i=1}^{n}p_ir_i\mu_{t+1}(\mathbf{c}^t)\cdot
(F_i(r_i\mu_{t+1}(\mathbf{c}^t))-F_i(r_i\mu_t(\mathbf{c}^t)))
\]
Thus, we have that
\[
\text{II}=(T-t)\cdot\sum_{i=1}^{n}p_ir_i\cdot(F_i(r_i\mu_t(\mathbf{c}^t))-F_i(r_i\mu_{t+1}(\mathbf{c}^t)))\leq -\frac{c^t_1+\dots+c^t_m}{\mu_{t+1}(\mathbf{c}^t)\cdot(T-t+1)}
\]
At last, it remains to bound III.\\
\textbf{Bound III:} Note that
\[
\frac{\partial \mu_{t+1}(\mathbf{c}^t)}{\partial c^t_{j_t}}=\frac{1}{(T-t)\cdot\sum_{i=1}^{n}p_ir_i^2f_i(r_i\mu_{t+1}(\mathbf{c}^t))\cdot\mu_{t+1}(\mathbf{c}^t)}
\]
Thus, we have that
\[
\frac{\partial}{\partial c^t_{j_t}}((T-t)\cdot\sum_{k=1}^{n}p_kr_kF_k(r_k\mu_{t+1}(\mathbf{c}^t)))=(T-t)\cdot\sum_{k=1}^{n}p_kr_k^2f_k(r_k\mu_{t+1}(\mathbf{c}^t))\cdot\frac{\partial \mu_{t+1}(\mathbf{c}^t)}{c^t_{j_t}}=\frac{1}{\mu_{t+1}(\mathbf{c}^t)}
\]
which implies that
\[
(T-t)\cdot\sum_{k=1}^{n}p_kr_k\cdot(F_k(r_k\mu_{t+1}(\mathbf{c}^t))-F_k(r_k\mu_{t+1}(\mathbf{c}^t-w_i\cdot\mathbf{e}_{j_t}))=\int_{c^t_{j_t}-w_i}^{c^t_{j_t}}\frac{1}{\mu_{t+1}(\mathbf{c}^t(u))}du
\]
where $\mathbf{c}^t(u)=\mathbf{c}^t+(u-c^t_{j_t})\cdot\mathbf{e}_{j_t}$. Also from Lemma \ref{aplemma2}, $1/\mu_{t+1}(\mathbf{c}^t)$ is convex over $c^t_{j_t}$ for any $\mathbf{c}^t$, then we have that:
\[
(T-t)\cdot\sum_{k=1}^{n}p_kr_k\cdot(F_k(r_k\mu_{t+1}(\mathbf{c}^t))-F_k(r_k\mu_{t+1}(\mathbf{c}^t-w_i\cdot\mathbf{e}_{j_t}))\leq\frac{w_i}{2}\cdot\left[ \frac{1}{\mu_{t+1}(\mathbf{c}^t)}+\frac{1}{\mu_{t+1}(\mathbf{c}^t-w_i\cdot\mathbf{e}_{j_t})} \right]
\]
Define a function $g(w_i)$ as
\[
g(w_i):=\frac{T-t}{w_i^2}\cdot\sum_{k=1}^{n}p_kr_k\cdot(F_k(r_k\mu_{t+1}(\mathbf{c}^t))-F_k(r_k\mu_{t+1}(\mathbf{c}^t-w_i\cdot\mathbf{e}_{j_t}))-\frac{1}{w_i\cdot\mu_{t+1}(\mathbf{c}^t)}
\]
then we have that
\[\begin{aligned}
g'(w_i)&=-\frac{2(T-t)}{w_i^3}\cdot\sum_{k=1}^{n}p_kr_k\cdot(F_k(r_k\mu_{t+1}(\mathbf{c}^t))-F_k(r_k\mu_{t+1}(\mathbf{c}^t-w_i\cdot\mathbf{e}_{j_t}))+\frac{1}{w_i^2\cdot\mu_{t+1}(\mathbf{c}^t-w_i\cdot\mathbf{e}_{j_t})}
+\frac{1}{w_i^2\cdot\mu_{t+1}(\mathbf{c}^t)}\\
&=-\frac{2}{w_i^3}\cdot\left[ (T-t)\sum_{k=1}^{n}p_kr_k(F_k(r_k\mu_{t+1}(\mathbf{c}^t))-F_k(r_k\mu_{t+1}(\mathbf{c}^t-w_i\cdot\mathbf{e}_{j_t}))-\frac{w_i}{2}\left(\frac{1}{\mu_{t+1}(\mathbf{c}^t)}+\frac{1}{\mu_{t+1}(\mathbf{c}^t-w_i\cdot\mathbf{e}_{j_t})}\right)  \right]\\
&\geq0
\end{aligned}\]
Thus, $g(w_i)$ is non-decreasing over $w_i$, which implies that, for any $w_i\leq c^t_{j_t}$, we have
\[\begin{aligned}
&g(w_i)=\frac{T-t}{w_i^2}\cdot\sum_{k=1}^{n}p_kr_k\cdot(F_k(r_k\mu_{t+1}(\mathbf{c}^t))-F_k(r_k\mu_{t+1}(\mathbf{c}^t-w_i\cdot\mathbf{e}_{j_t}))-\frac{1}{w_i\cdot\mu_{t+1}(\mathbf{c}^t)}\\
&\leq g(c^t_{j_t})=\frac{T-t}{(c^t_{j_t})^2}\cdot\sum_{k=1}^{n}p_kr_k\cdot(F_k(r_k\mu_{t+1}(\mathbf{c}^t))-F_k(r_k\mu_{t+1}(\mathbf{c}^t-c^t_{j_t}\cdot\mathbf{e}_{j_t}))-\frac{1}{c^t_{j_t}\cdot\mu_{t+1}(\mathbf{c}^t)}\\
\end{aligned}\]
which implies
\begin{equation}\label{20201}
\begin{aligned}
&(T-t)\cdot \sum_{k=1}^{n}p_kr_k\cdot(F_k(r_k\mu_{t+1}(\mathbf{c}^t))-F_k(r_k\mu_{t+1}(\mathbf{c}^t-w_i\cdot\mathbf{e}_{j_t}))\\
&\leq \frac{w_i^2(T-t)}{(c^t_{j_t})^2}\cdot\sum_{k=1}^{n}p_kr_k\cdot(F_k(r_k\mu_{t+1}(\mathbf{c}^t))-F_k(r_k\mu_{t+1}(\mathbf{c}^t-c^t_{j_t}\cdot\mathbf{e}_{j_t}))+\frac{1}{\mu_{t+1}(\mathbf{c}^t)}\cdot[w_i-\frac{w_i^2}{c^t_{j_t}}]
\end{aligned}
\end{equation}
Also note that from Lemma \ref{assumplemma1}, for each $i$, we have that
\[
\frac{r_i\mu_{t+1}(\mathbf{c}^t)\cdot (F_i(r_i\mu_{t+1}(\mathbf{c}^t))-F_i(r_i\mu_{t+1}(\mathbf{c}^t-c^t_{j_t}\cdot\mathbf{e}_{j_t})))}{\int_{r_i\mu_{t+1}(\mathbf{c}^t-c^t_{j_t}\cdot\mathbf{e}_{j_t})}^{r_i\mu_{t+1}(\mathbf{c}^t)}u_idF_i(u_i)}\leq M
\]
which implies
\[\begin{aligned}
\frac{\sum_{i=1}^{n}p_ir_i\mu_{t+1}(\mathbf{c}^t)\cdot (F_i(r_i\mu_{t+1}(\mathbf{c}^t))-F_i(r_i\mu_{t+1}(\mathbf{c}^t-c^t_{j_t}\cdot\mathbf{e}_{j_t})))}{\sum_{i=1}^{n}p_i\cdot\int_{r_i\mu_{t+1}(\mathbf{c}^t-c^t_{j_t}\cdot\mathbf{e}_{j_t})}^{r_i\mu_{t+1}(\mathbf{c}^t)}u_idF_i(u_i)}\leq M
\end{aligned}\]
And from the definition of the threshold $\mu_t(\mathbf{c}^t)$ in \eqref{TR3}, we have
\[
\sum_{i=1}^{n}p_i\cdot\int_{r_i\mu_{t+1}(\mathbf{c}^t-c^t_{j_t}\cdot\mathbf{e}_{j_t})}^{r_i\mu_{t+1}(\mathbf{c}^t)}u_idF_i(u_i)=\frac{c^t_{j_t}}{T-t}
\]
Thus, we have that
\begin{equation}\label{20202}
\sum_{i=1}^{n}p_ir_i\cdot (F_i(r_i\mu_{t+1}(\mathbf{c}^t))-F_i(r_i\mu_{t+1}(\mathbf{c}^t-c^t_{j_t}\cdot\mathbf{e}_{j_t})))\leq\frac{M\cdot c^t_{j_t}}{\mu_{t+1}(\mathbf{c}^t)\cdot(T-t)}
\end{equation}
Substituting \eqref{20202} into \eqref{20201}, we have that
\[\begin{aligned}
&(T-t)\cdot \sum_{k=1}^{n}p_kr_k\cdot(F_k(r_k\mu_{t+1}(\mathbf{c}^t))-F_k(r_k\mu_{t+1}(\mathbf{c}^t-w_i\cdot\mathbf{e}_{j_t}))\\
&\leq\frac{(M-1)\cdot w_i^2}{\mu_{t+1}(\mathbf{c}^t)\cdot c^t_{j_t}}+\frac{w_i}{\mu_{t+1}(\mathbf{c}^t)}
\end{aligned}\]
As a result, we could bound the term III by:
\[\begin{aligned}
\text{III}&=\sum_{i=1}^{n}p_i\cdot\int_{0}^{h_t^i(\mathbf{c}^t)}\{ (T-t)\cdot\sum_{k=1}^{n}p_kr_k\cdot(F_k(r_k\mu_{t+1}(\mathbf{c}^t))-F_k(r_k\mu_{t+1}(\mathbf{c}^t-w_i\cdot\mathbf{e}_{j_t}))  )  \}dF_i(w_i)\\
&\leq \underbrace{  \frac{M-1}{\mu_{t+1}(\mathbf{c}^t)\cdot c^t_{j_t}}\cdot\left[\sum_{i=1}^{n}p_i\cdot\int_{0}^{h_t^i(\mathbf{c}^t)}w_i^2dF_i(w_i) \right] }_{\text{IV}}+\underbrace{ \frac{1}{\mu_{t+1}(\mathbf{c}^t)}\cdot
\left[\sum_{i=1}^{n}p_i\cdot\int_{0}^{h_t^i(\mathbf{c}^t)}w_idF_i(w_i) \right] }_{\text{V}}
\end{aligned}\]
In what follows, we upper bound the term IV and V separately and we then get the upper bound of III. Note that
\[\begin{aligned}
\sum_{i=1}^{n}p_i\cdot\int_{0}^{h_t^i(\mathbf{c}^t)}w_i^2dF_i(w_i)&\leq \sum_{i=1}^{n}p_ih_t^i(\mathbf{c}^t)\cdot\int_{0}^{h_t^i(\mathbf{c}^t)}w_idF_i(w_i)\leq r_{\max}\cdot\mu_t(\mathbf{c}^t)\cdot\sum_{i=1}^{n}p_i\cdot\int_{0}^{h_t^i(\mathbf{c}^t)}w_idF_i(w_i)\\
&\leq r_{\max}\cdot\mu_t(\mathbf{c}^t)\cdot\sum_{i=1}^{n}p_i\cdot\int_{0}^{r_i\mu_t(\mathbf{c}^t)}w_idF_i(w_i)=r_{\max}\cdot\mu_t(\mathbf{c}^t)\cdot\frac{c^t_1+\dots+c^t_m}{T-t+1}\\
&\leq r_{\max}\cdot\mu_{t+1}(\mathbf{c}^t)\cdot\frac{c^t_1+\dots+c^t_m}{T-t+1}
\end{aligned}\]
Thus, we have
\[
\text{IV}=\frac{M-1}{\mu_{t+1}(\mathbf{c}^t)\cdot c^t_{j_t}}\cdot\left[\sum_{i=1}^{n}p_i\cdot\int_{0}^{h_t^i(\mathbf{c}^t)}w_i^2dF_i(w_i) \right]\leq \frac{(M-1)\cdot r_{\max}\cdot(c^t_1+\dots+c^t_m)}{c^t_{j_t}\cdot(T-t+1)}
\leq \frac{(M-1)r_{\max}m}{T-t+1}
\]
It also holds that
\[\begin{aligned}
\text{V}&= \frac{1}{\mu_{t+1}(\mathbf{c}^t)}\cdot\left[\sum_{i=1}^{n}p_i\cdot\int_{0}^{h_t^i(\mathbf{c}^t)}w_idF_i(w_i) \right]\leq  \frac{1}{\mu_{t+1}(\mathbf{c}^t)}\cdot
\left[\sum_{i=1}^{n}p_i\cdot\int_{0}^{r_i\mu_t(\mathbf{c}^t)}w_idF_i(w_i) \right]\\
&=\frac{c^t_1+\dots+c^t_m}{\mu_{t+1}(\mathbf{c}^t)\cdot(T-t+1)}
\end{aligned}\]
Then, we obtain the following upper bound of the term III:
\[
\text{III}\leq \frac{(M-1)r_{\max}m}{T-t+1}+\frac{c^t_1+\dots+c^t_m}{\mu_{t+1}(\mathbf{c}^t)\cdot(T-t+1)}
\]
Thus, we have
\[\begin{aligned}
\rho_t(\mathbf{c}^t)-\bar{\rho}_{t+1}&\leq\text{I}+\text{II}+\text{III}\leq \frac{Mr_{\max}m}{T-t+1}-\frac{c^t_1+\dots+c^t_m}{\mu_{t+1}(\mathbf{c}^t)\cdot(T-t+1)}+\frac{(M-1)r_{\max}m}{T-t+1}+\frac{c^t_1+\dots+c^t_m}{\mu_{t+1}(\mathbf{c}^t)\cdot(T-t+1)}\\
&=\frac{(2mM-m)\cdot r_{\max}}{T-t+1}
\end{aligned}\]
holds for all possible $\mathbf{c}^t$, which implies that
\[
\bar{\rho}_t-\bar{\rho}_{t+1}\leq \frac{(2mM-m)\cdot r_{\max}}{T-t+1}
\]
Moreover, note that $\bar{\rho}_{T-\hat{K}+1}\leq r_{\max}\cdot\hat{K}$, we have that
\[\begin{aligned}
\rho_1(\mathbf{c})&\leq\bar{\rho}_1\leq\sum_{t=1}^{T-\hat{K}}(\bar{\rho}_t-\bar{\rho}_{t+1})+\bar{\rho}_{T-\hat{K}+1}\leq \sum_{t=1}^{T-\hat{K}} \frac{(2mM-m)\cdot r_{\max}}{T-t+1}+r_{\max}\cdot\hat{K}\\
&\leq (\log T+1)\cdot(2mM-m)\cdot r_{\max}+r_{\max}\cdot\hat{K}
\end{aligned}\]
which completes our proof.

\section{Proof of Theorem \ref{Main4}}
Without lose of generality, we normalize $r_{\max}$ to be $1$ and we assume that $T$ is large enough such that for each $i$, $r_i\cdot\mu(\mathbf{c})\leq\bar{\omega}$. Since we have shown in Theorem \ref{Main3} that
\[
\mathbb{E}[V^{\text{off}}(\mathbf{I})]\geq\mathbb{E}[V^{\text{ATP}_1}(\mathbf{I})]\geq T\cdot\sum_{i=1}^{n}p_i\cdot r_i\cdot F_i(r_i\cdot\mu(\mathbf{c}))-O(\log T)
\]
it is enough to show that $T\cdot\sum_{i=1}^{n}p_i r_i F_i(r_i\mu(\mathbf{c}))=\Omega(T^{\frac{1}{1+\alpha}})$. In what follows, we will use $\mu_T$ to denote the threshold $\mu(\mathbf{c})$ in \eqref{TR2} and denote $l$ as the positive integer such that $\lambda^l\cdot\bar{\omega}\leq\mu_{T}\leq\lambda^{l-1}\cdot\bar{\omega}$. We have from \eqref{TR2} that
\[
\frac{\sum_{j=1}^{n}c_j}{T}\geq\sum_{i=1}^{n}p_i\cdot\int_{0}^{r_i\cdot\mu_{T}}u_id F_i(u_i)\geq\sum_{i=1}^{n}p_i\cdot\sum_{k=l}^{+\infty}\int_{r_i\lambda^{k+1}\bar{\omega}}^{r_i\lambda^k\bar{\omega}}u_id F_i(u_i)
\]
We also have that
\[\begin{aligned}
\int_{r_i\lambda^{k+1}\bar{\omega}}^{r_i\lambda^k\bar{\omega}}u_idF_i(u_i)&\geq r_i\lambda^{k+1}\bar{\omega}\cdot[F_i(r_i\lambda^k\bar{\omega})-F_i(r_i\lambda^{k+1}\bar{\omega})]\geq r_i\lambda^{k+1}\bar{\omega}(\gamma_1-1)\cdot F_i(r_i\lambda^{k+1}\bar{\omega})\\
&\geq(\frac{\lambda}{\gamma_2})^{k+1}\bar{\omega}(\gamma_1-1)r_iF_i(r_i\bar{\omega})
\end{aligned}\]
The last inequality holds by noting $F_i(r_i\bar{\omega})\leq \gamma_2^{k+1}\cdot F_i(r_i\lambda^{k+1}\bar{\omega})$. Thus, we have that
\[
\frac{\sum_{j=1}^{m}c_j}{T}\geq \bar{\omega}(\gamma_1-1)\sum_{i=1}^{n}r_ip_iF_i(r_i\bar{\omega})\cdot \sum_{k=l}^{+\infty}(\frac{\lambda}{\gamma_2})^{k+1}=\bar{\omega}(\gamma_1-1)\sum_{i=1}^{n}r_ip_iF_i(r_i\bar{\omega})(\frac{\lambda}{\gamma_2})^{l+1}\cdot\frac{\gamma_2}{\gamma_2-\lambda}
\]
Define a constant $Q=\frac{(\sum_{j=1}^{m}c_j)\cdot(\gamma_2-\lambda)}{\bar{\omega}(\gamma_1-1)\lambda\sum_{i=1}^{n}r_ip_iF_i(r_i\bar{\omega})}$, we have that
\[
(\frac{\lambda}{\gamma_2})^l\leq \frac{Q}{T}
\]
Also, since $0<\lambda<1$ and $0<\frac{1}{\gamma_2}<1$, there exists a constant $\alpha>0$ such that $\frac{1}{\gamma_2}=\lambda^\alpha$. Thus, we have
\[
\lambda^{(1+\alpha)l}\leq\frac{Q}{T}\Rightarrow \lambda^l\leq (\frac{Q}{T})^{\frac{1}{1+\alpha}}
\]
Notice that from \eqref{TR2}, we have
\[
\sum_{i=1}^{n}p_ir_iF_i(r_i\mu_{T})\geq \frac{M_1}{\mu_{T}}\cdot\sum_{i=1}^{n}p_i\int_{0}^{r_i\mu_{T}}u_id F_i(u_i)=\frac{M_1\cdot (\sum_{j=1}^{m}c_j)}{\mu_{T}\cdot T}\geq\frac{M_1\lambda(\sum_{j=1}^{m}c_j)}{\bar{\omega}\cdot T}\cdot\frac{1}{\lambda^l}
\]
Then, we have that
\[\begin{aligned}
T\cdot\sum_{i=1}^{n} p_i r_{i} F_i(r_i\mu_{T})\geq\frac{M_1\lambda(\sum_{j=1}^{m}c_j)}{\bar{\omega}}\cdot\frac{1}{\lambda^l}\geq\frac{M_1\lambda(\sum_{j=1}^{m}c_j)}{Q^{\frac{1}{1+\alpha}}\bar{\omega}}\cdot T^{\frac{1}{1+\alpha}}
\end{aligned}\]
Further note that from \eqref{PR20}, we have
\[
\hat{K}+1\geq \frac{\sum_{j=1}^{m}c_j}{\sum_{i=1}^{n}p_i\cdot\int_{0}^{r_i\cdot\frac{\bar{\omega}}{r_{\max}}}w_idF_i(w_i)}\geq \frac{\sum_{j=1}^{m}c_j}{\sum_{i=1}^{n}p_ir_i\bar{\omega}F_i(r_i\bar{\omega})}
\]
which implies that $Q\leq\frac{(\gamma_2-\lambda)(\hat{K}+1)}{(\gamma_1-1)\lambda}$. Thus we have that
\[
T\cdot\sum_{i=1}^{n} p_i r_{i} F_i(r_i\mu_{T})\geq \frac{M_1\lambda(\sum_{j=1}^{m}c_j)\cdot(\lambda(\gamma_1-1))^{\frac{1}{1+\alpha}}}{\bar{\omega}\cdot((\gamma_2-\lambda)(\hat{K}+1))^{\frac{1}{1+\alpha}}}\cdot T^{\frac{1}{1+\alpha}}=\Omega(T^{\frac{1}{1+\alpha}})
\]
which completes our proof.

\end{APPENDIX}
%%%%%%%%%%%%%%%%%
\end{document}